%% file: main.tex
\documentclass[11pt]{article}
\input{Preamb.tex}
\usepackage{graphicx}
\usepackage{subcaption}

\newcommand{\ProDes}{$\mathtt{ProxBundle}$}

\newcommand{\R}{\mathbb{R}}

\newcommand{\dom}{\operatorname{dom}}

\newcommand{\prox}{\operatorname{prox}}

\usepackage{array}
\usepackage{booktabs}

\usepackage[numbers,merge,sort&compress]{natbib}
\begin{document}

\title{
 \bf \Large Revisiting Proximal Bundle Methods: Improved Rates under H{\"o}lder Smoothness \thanks{This work is supported by NSF CAREER 2340713 and CMMI 2320697. Emails: \{fliao,zhengy\}@ucsd.edu.} 
}
\author[1]{Feng-Yi Liao}
\author[1]{Yang Zheng}
\affil[1]{\small Department of Electrical and Computer Engineering, University of California San Diego}
\date{\small \today \vspace{-5ex}} 

\maketitle

\begin{abstract}
Proximal bundle methods (PBMs) are classical algorithms for nonsmooth convex optimization. Existing analyses of the classical PBM couple the null steps with the descent test. This coupling obscures how the bundle updates approximate the proximal subproblem. In~this work, we consider composite objectives $F=f+h$ and view each null-step cycle as an inner bundle subroutine, called \ProDes. We analyze \ProDes{} independently of any stopping criterion under general bundle model conditions and show that it automatically adapts to H{\"o}lder smoothness. Combining this inner-loop analysis with the descent-step analysis yields sharper complexity bounds for the classical PBM. For any fixed proximal parameter, its overall complexity is $\mathcal O\big(\epsilon^{-\frac{3-\nu}{1+\nu}}\big)$ for $\nu\in[0,1)$ and $\mathcal O(\epsilon^{-1})$ for $\nu=1$, where $\nu$ is the H{\"o}lder smoothness~exponent. Choosing the proximal parameter proportional to $\epsilon$ improves the rate to $\mathcal O\big(\epsilon^{-\frac{2}{1+\nu}}\big)$ for $\nu \in [0,1)$. These are the first guarantees under H\"older smoothness with $\nu\in(0,1)$ for the classical descent test. We further introduce an absolute model-error test. The resulting PBM variant admits a clean inexact proximal-point analysis, and for any fixed proximal parameter, achieves the same complexity $\mathcal O\big(\epsilon^{-\frac{2}{1+\nu}}\big)$. Overall, our analysis separates the roles of the descent and null steps and gives a modular understanding of PBMs across different~tests.
\end{abstract}

\section{Introduction}

In this work, we consider the composite convex optimization problem 
\begin{equation}\label{eq:pb-main}
\min_{x\in\R^n}F(x),
 \qquad
 F(x):=f(x)+h(x),
\end{equation}
where \(f:\R^n\to\R\) is finite-valued and convex, and \(h:\R^n\to\R\cup\{+\infty\}\) is proper, lower semicontinuous, and convex. Throughout, we assume $F$ attains its minimum value $F^\star$ with the nonempty solution set
$
 X^\star:= \{x \in \R^n\mid F(x)=F^\star\}. 
$ 
This composite form in \cref{eq:pb-main} provides substantial flexibility in modeling. In particular, the function $h$ may encode a nonsmooth regularizer or a closed convex constraint through an indicator function, and the function $f$ is available through a first-order oracle and may be nonsmooth. This form covers a wide range of problems in data science and engineering. For instance, $\ell_1$ regularization is standard in sparse estimation~\cite{tibshirani1996regression}, and the Lagrangian dual of a conic program can be nonsmooth~\cite{nesterov1994interior}.

Proximal bundle methods are classical algorithms to solve nonsmooth convex problems, introduced in the 1970s \cite{Lemarechal1975,mifflin1977algorithm}. Conceptually, they are similar to modern model-based methods \cite{davis2019stochastic,drusvyatskiy2021nonsmooth}. In model-based methods, each iteration builds a tractable approximation of the original function, known as model $F_k$, and then updates the iterate by applying a proximal step:
\begin{equation*}
    x_{k+1} = \argmin F_k(x) + \frac{\rho}{2}\|x - x_k\|^2,
\end{equation*}
where $\rho > 0$ is a proximal parameter. Unlike the usual model-based schemes, one unique feature of classical bundle methods is that they only update the iterate $x_{k+1}$ when the objective decrease in $F$ is at least a $\beta$ fraction of the decrease the model predicted. Moreover, bundle methods retain first-order information from past iterations so that $F_k$ reflects more than the geometry of $F$ near~$x_k$.

These modifications appear minor but have a striking consequence: the iterates from the classical bundle methods with any fixed parameter $\rho>0$ and $\beta\in (0,1)$ converge to a minimizer of $F$ when it exists~\cite{kiwiel1990proximity,kiwiel1995approximations}, \cite[Theorem 7.16]{ruszczynski2011nonlinear}. Moreover, the convergence rate naturally adapts to the regularity of the problem with little algorithm modification. The same proximal bundle algorithm converges at $O(\epsilon^{-3})$ for Lipschitz convex problems \cite{kiwiel2000efficiency} and at $O(\epsilon^{-1})$ for smooth problems. The rate further improves automatically when the problem enjoys the H\"older growth property~\cite{diaz2023optimal}. The nonsmooth rate also improves to the optimal $O(\epsilon^{-2})$ once $\rho$ is scaled with the target accuracy~\cite{diaz2023optimal}. This automatic adaptivity is in sharp contrast with other first-order methods, whose stepsizes must be tuned carefully based on the problem regularity. For example, gradient descent and its accelerated variants require a stepsize on the order of the inverse smoothness constant \cite[Chapter 2]{nesterov2018lectures}, subgradient methods require a carefully controlled diminishing sequence \cite[Chapter 3.2]{nesterov2018lectures}, and the universal method covers both smooth and nonsmooth cases by running a line search at every iteration~\cite{nesterov2015universal}. 
This raises a basic question: \emph{where does the adaptivity~come~from?} 

The asymptotic convergence of the classical bundle methods is well established under various assumptions \cite{correa1993convergence,lemarechal1981bundle,lemarechal1994condensed,kiwiel1990proximity,kiwiel1990proximity,kiwiel1995approximations,ruszczynski2011nonlinear}, and there are also successful applications \cite{de2014convex,helmberg2000spectral,liao2026overview,ding2023revisiting,apkarian2008trust,watanabe2026weak}. Nonasymptotic~guarantees are considerably more involved to establish. Classical nonasymptotic analysis was first initiated in the early 2000s \cite{kiwiel2000efficiency}, and a comprehensive treatment was only given recently in \cite{diaz2023optimal}~whose scope is restricted to the unconstrained setting $h\equiv0$. Establishing such guarantees requires~tracking two distinct types of iterations. The standard analysis~\cite{kiwiel2000efficiency,diaz2023optimal} adopts a nested structure,~which has a sequence of \emph{descent steps}, at which the proximal center moves, and, between consecutive descent steps, a cycle of \emph{null steps}, at which the center is fixed and the model is refined. The analysis of the sequence of descent steps is transparent. Given a center $\bar x$, write $\Delta := F(\bar x) - \min_x\{F(x) + \frac{\rho}{2}\|x-\bar x\|^2\}$ for the proximal gap at $\bar x$. Every descent step decreases the objective by at least $\beta\Delta$, which is the decrease guaranteed by an exact proximal point step, discounted by $\beta$. The convergence of the descent steps thus inherits that of the proximal point method (PPM). The lower bounds on $\Delta$ can be sharpened under the H\"older growth of $F$, which drive the descent-step adaptivity established~in~\cite{diaz2023optimal}. 

The cycle of null steps is far less transparent. The argument of~\cite{diaz2023optimal} tracks the approximated proximal gap $\widetilde\Delta_k := F(\bar x) - \min_x\{F_k(x) + \frac{\rho}{2}\|x-\bar x\|^2\}$, and shows that as long as null steps continue to occur, it obeys the following recursion,
\begin{align}
    \label{eq:recursion-classical}
   \widetilde \Delta_{k+1} \leq \widetilde\Delta_{k} - c_0 \times \widetilde \Delta_{k}^2, 
\end{align}
where $c_0$ is a constant. Since $F_k \le F$ implies $\widetilde\Delta_k \ge \Delta$, the cycle of null steps must terminate, and the recursion \cref{eq:recursion-classical} converts this into a bound in terms of $\Delta$. This recursion, however, is not a statement about the bundle iterations alone. It is derived from the \emph{failure} of the classical descent~test
\begin{align}
    \label{eq:PMB-test-intro}
    \beta (F(\bar x) - {F}_k(z_{k+1}) ) \leq F(\bar x) - F(z_{k+1}), 
\end{align}
where $z_{k+1} = \arg \min_x \{ F_k(x) + \frac{\rho}{2}\|x-\bar x\|^2\}$ is the candidate point, and $\beta \in (0,1)$. Moreover, the constant $c_0$ in \cref{eq:recursion-classical} carries a factor of $(1-\beta)^2$ \cite{diaz2023optimal}. The convergence of null steps is thus not defined independently of the rule \cref{eq:PMB-test-intro}. Consequently, the existing analyses~\cite{diaz2023optimal} treat regularity in two isolated cases: $c_0$ is a constant when $f$ is Lipschitz, whereas a separate argument bounds the number of null steps uniformly when $f$ is $L_1$-smooth, with no results connecting the two cases.

In this work, we show that the null-step cycle adapts to H\"older smoothness on its own, independently of the descent test \cref{eq:PMB-test-intro}. The two forms of adaptivity then have separate and clear sources: adaptivity to H\"older growth comes from the PPM structure of the descent steps, and adaptivity to H\"older smoothness is intrinsic to the bundle iterations.

\subsection{Our contributions} 

\begin{table*}[t]
\centering
\setlength{\belowcaptionskip}{0pt}
\caption{
Complexity of PBMs for finding an iterate $x_k$ satisfying $F(x_k)-\min_x F(x)\leq \epsilon$, where $F=f+h$ and $f$ satisfies \cref{eq:weakly-smooth-intro}. Here, we present only the leading term with $\epsilon$ sufficiently small. Let $X^\star$ be the optimal solution set. For~the classical PBM, we denote  $ \beta \in (0,1),$ $ r_{\nu} = \frac{1-\nu}{1+\nu}$, and $ \sup \{\Dist(x,X^\star) \mid F(x) \leq F(y_0)\} \leq D < +\infty$. For the PBM variant, we denote $d_0 = \Dist(y_0, X^\star)$ and $
A=
\frac{
\log(1+L_1/\rho)+\log(3+2R_0)
}{
\log(1+\rho/L_1)
}$ with $R_0=\frac{F(y_0) - F^\star}{\rho d_0^2}$.
}
\label{tab:intro-rate}

\small
\renewcommand{\arraystretch}{1.35}
\setlength{\tabcolsep}{4pt}

\begin{tabular}{@{}lccccc@{}}
\toprule
Method
&
$\rho$
&
\shortstack{Null-step\\test}
&
$\nu=0$
&
$\nu\in(0,1)$
&
$\nu=1$
\\
\midrule

\shortstack{Classical\\PBM}
&
$\rho>0$
&
\cref{eq:PMB-test-intro}
&
$\displaystyle
\mathcal{O}\!\left(
\frac{\rho L_0^2D^4}
{\beta(1-\beta)\epsilon^3}
\right)$
&
$\displaystyle
\mathcal{O}\!\left(
\frac{
L_\nu^{\frac{2}{1+\nu}}
\rho^{r_\nu}
D^{\frac{4}{1+\nu}}
}{
\beta(1-\beta)^{r_\nu}
\epsilon^{\frac{3-\nu}{1+\nu}}
}
\right)$
&
$\displaystyle
\mathcal{O}\!\left(
\frac{\rho D^2}{\beta\epsilon}
\left[
1+
\frac{
\log_+\!\left(
\frac{L_1^2}{(1-\beta)\rho^2}
\right)
}{
\log(1+\rho/L_1)
}
\right]
\right)$
\\[1.3ex]

\shortstack{Classical\\PBM}
&
$\displaystyle \rho=\frac{\epsilon}{D^2}$
&
\cref{eq:PMB-test-intro}
&
$\displaystyle
\mathcal{O}\!\left(
\frac{L_0^2D^2}
{\beta(1-\beta)\epsilon^2}
\right)$
&
$\displaystyle
\mathcal{O}\!\left(
\frac{
L_\nu^{\frac{2}{1+\nu}}D^2
}{
\beta(1-\beta)^{r_\nu}
\epsilon^{\frac{2}{1+\nu}}
}
\right)$
&
$\displaystyle
\widetilde{\mathcal{O}}\!\left(
\frac{L_1D^2}{\beta\epsilon}
\left[
1+
\log_+\!\left(
\frac{L_1^2D^4}
{(1-\beta)\epsilon^2}
\right)
\right]
\right)$
\\[1.3ex]

\shortstack{PBM\\variant}
&
$\rho>0$
&
\shortstack{
\cref{eq:new-test}
}
&
$\displaystyle
\mathcal{O}\!\left(
\frac{L_0^2d_0^2}{\epsilon^2}
\right)$
&
$\displaystyle
\mathcal{O}\!\left(
\frac{
L_\nu^{\frac{2}{1+\nu}}d_0^2
}{\epsilon^{\frac{2}{1+\nu}}
}
\right)$
&
$\displaystyle
\mathcal{O}\!\left(
\frac{\rho d_0^2}{\epsilon}
(1+A)
\right)$
\\

\bottomrule
\end{tabular}

\end{table*}

We revisit the classical PBM and establish its convergence rates under H\"older smoothness; see \Cref{tab:intro-rate}. In particular, we consider the setting where the function $f$ in \cref{eq:pb-main} is H{\"o}lder smooth,~i.e., 
\begin{align}
    \label{eq:weakly-smooth-intro}
    \|\nabla f(x) - \nabla f(y)\| \leq L_{\nu}\|x - y\|^{\nu}, \quad \forall x, y \in \RR^n, 
\end{align}
where $L_{\nu} \geq 0 $ denotes the H{\"o}lder-smoothness constant, and $\nu \in [0,1] $ is fixed. With a slight abuse of notation, if $f$ is not differentiable,  $\nabla f(x)$ in \cref{eq:weakly-smooth-intro} denotes any subgradient of $\partial f(x)$. When $\nu = 0$, \cref{eq:weakly-smooth-intro} recovers the case with bounded subgradients. An $L$-Lipschitz function satisfies \cref{eq:weakly-smooth-intro} with $\nu = 0$ and $L_{0} = 2L$. If $\nu = 1$, \cref{eq:weakly-smooth-intro} recovers the class of $L_{1}$-smooth functions. Our main results are~as~follows.

We first view the null-step cycle as a standalone subroutine, called \texttt{ProxBundle}, that approximately solves the proximal subproblem $\min_x \{F(x) + \frac{\rho}{2}\|x - \bar x\|^2\}$ at a fixed center $\bar x$. This naturally leads to a double-loop interpretation of the classical PBM.  We analyze the inner loop, \texttt{ProxBundle}, without specifying any stopping criterion. For convenience, we use a separate index $j$ to denote the inner bundle iterations. 
Let $F_j$ denote the bundle model at the inner bundle iteration $j$ and 
$
  \delta_j := \min_x\Big\{F(x) + \tfrac{\rho}{2}\|x-\bar x\|^2\Big\}
            - \min_x\Big\{F_j(x) + \tfrac{\rho}{2}\|x-\bar x\|^2\Big\}
            \;\ge\;0 
$ 
denote the gap between the~true and approximated proximal values. If $f$ is H\"older smooth, i.e., satisfying \cref{eq:weakly-smooth-intro}, then we have 
\begin{equation}\label{eq:contribution-recursion}
  \delta_{j+1} \;\le\; \delta_j - c_1\times\delta_{j+1}^{\frac{2}{1+\nu}},
  \qquad \forall j \ge 0,
\end{equation}
where $c_1>0$ depends only on $\rho$, $L_\nu$, and $\nu$. Thus, $\delta_j$ converges to zero with the rate $O(j^{-\frac{1+\nu}{1-\nu}})$ for $\nu\in[0,1)$ and linearly for $\nu=1$. In contrast to \cref{eq:recursion-classical}, the recursion \cref{eq:contribution-recursion} involves neither the descent parameter $\beta$ nor the test \cref{eq:PMB-test-intro}. In other words,  \texttt{ProxBundle} solves the proximal subproblem at a rate that adapts to the smoothness of $f$ on its own. Note that \cref{eq:contribution-recursion} only requires the standard bundle conditions in \cref{assump:bm-single}, and therefore covers the two-cut, multi-cut, and spectral bundle models. Our analysis also applies to the \emph{computable} model error $e_j := F(z_{j+1}) - F_j(z_{j+1})$, which upper bounds $\delta_j$ and inherits its rate over a dyadic window,
$
  \min_{t+1\le j\le 2t} e_j
  \;\le\; O\big(t^{-\frac{1+\nu}{1-\nu}}\big), \, \forall t\ge1. 
$ 

We then analyze the complexity of the classical PBM by combining the inner-loop rate above with the PPM convergence of the descent steps. For any fixed $\rho>0$ and $\beta \in (0,1)$, the classical PBM finds an iterate satisfying $F(x) - F^\star \le \epsilon$ within $O\big(\epsilon^{-1}\big)$ iterations when $\nu = 1$, and $O\big(\epsilon^{-\frac{3-\nu}{1+\nu}}\big)$ iterations when $\nu \in [0,1)$. The convergence rate improves to $O(\epsilon^{-\frac{2}{1+\nu}})$ when $\rho$ is chosen proportional to $\epsilon$ for $\nu \in [0,1)$. These correspond to the first two rows in \Cref{tab:intro-rate}. To the best of our knowledge, these are the first non-asymptotic guarantees for the classical PBM with the descent test \cref{eq:PMB-test-intro} under H\"older smoothness; the regime $\nu\in(0,1)$ was previously not discussed. In the two extreme cases $\nu\in\{0,1\}$, our bounds recover the known rates in \cite{diaz2023optimal} while improving their dependence on the algorithmic parameters. In particular, the factor $(L_1/\rho)^2$ in~\cite{diaz2023optimal} is replaced by $ \log(L_1^2/\rho^2)$ in the smooth case when $\rho \ll L_{1}$,  and the dependence on $(1-\beta)^{-1}$ improves from quadratic to linear in the nonsmooth case; see \cref{tab:comparison-classical-PBM} in \Cref{subsec:comparison-state-of-art} for a detailed comparison. Our results also hold for the composite problem~\cref{eq:pb-main}, whereas~\cite{diaz2023optimal} is restricted to the unconstrained setting with $h\equiv0$.

Finally, the computable model error $e_j$ suggests a way to terminate \texttt{ProxBundle} other than~\cref{eq:PMB-test-intro}. Each candidate produced by the inner loop satisfies the inexact proximal inclusion
$
  z_{j+1} = \bar x - \tfrac{1}{\rho}s_{j+1},
  \, s_{j+1} \in \partial_{e_j} F(z_{j+1}),
$
where $\partial_{e}F$ denotes the $e$-inexact subdifferential of $F$. Since $e_j$ is directly computable, we may terminate the inner loop once
\begin{equation}\label{eq:new-test}
    e_j \le \frac{\epsilon}{2},
\end{equation}
where $\epsilon$ is the target accuracy. This gives an inexact PPM step with an explicitly controlled error. The resulting PBM variant achieves $O(\epsilon^{-\frac{2}{1+\nu}})$ total complexity for \emph{any} fixed $\rho>0$. Note that the classical PBM attains the same rate only by tuning $\rho = \Theta(\epsilon)$. The analysis of this PBM variant is also modular and transparent: the outer complexity follows from standard inexact PPM arguments and the inner complexity directly comes from the subroutine analysis above.

We note that the rates in \Cref{tab:intro-rate} are not entirely new on bundle-type methods in the literature, and different variants of the stopping rule \cref{eq:new-test} have been used in modern PBM developments \cite{liang2021proximal,liang2024unified,liang2026proximal,liang2025primal,
fersztand2024modified}. In particular, comparable rates $O(\epsilon^{-\frac{2}{1+\nu}}\log(1/\epsilon))$ have been established for modern PBM variants with different stopping rules~\cite{liang2024unified,liang2026proximal}, which we improve on by removing the extra logarithmic term and thus match the universal primal gradient method of~\cite{nesterov2015universal}. We provide a further comparison with modern PBM variants \cite{liang2021proximal,liang2024unified,liang2026proximal,liang2025primal,fersztand2024modified} in \Cref{subsection:comparison-modern-PBM}. To our knowledge, the rates for the classical PBM with the descent test  \cref{eq:PMB-test-intro} under H\"older smoothness are new. A conceptual contribution is that we analyze the convergence of \ProDes{} under a general model class with no stopping criterion, which allows a unified analysis for both classical PBM and modern PBM variants. 

\subsection{Related work}

Early PBM developments primarily established asymptotic convergence under various assumptions \cite{lemarechal1981bundle,lemarechal1994condensed,kiwiel1990proximity,kiwiel1995approximations,ruszczynski2011nonlinear}. The first non-asymptotic guarantee for the classical PBM is due to Kiwiel~\cite{kiwiel2000efficiency}, who established an $O(\epsilon^{-3})$ complexity for Lipschitz convex objectives. An improved rate of $\bigO(\epsilon^{-1}\log(1/\epsilon))$ was established in \cite{du2017rate} for convex problems satisfying quadratic growth (including strong convexity as a special case \cite{liao2024error}). A comprehensive understanding of the classical PBM appeared only~recently in the work of D\'iaz and Grimmer~\cite{diaz2023optimal}, which shows that the method adapts to general~H\"older growth beyond strong convexity and that a suitably tuned proximal parameter recovers the optimal $O(\epsilon^{-2})$ rate for nonsmooth problems. Their analysis considers only the unconstrained setting $h\equiv0$ and treats the smooth and Lipschitz cases separately, leaving the intermediate H\"older-smooth case open. Our analysis shows that classical bundle iterations naturally adapt to H\"older smoothness.

Although the PBM is motivated by the PPM, the precise sense in which the bundle iterations solve the proximal subproblem remains unclear. As discussed above, the analysis in \cite{diaz2023optimal} interweaves the descent and null steps. This issue was partially addressed in the work of Liang and~Monteiro~\cite{liang2021proximal}, which showed that a cycle of null steps can be interpreted as approximately solving the true proximal subproblem. In particular, \cite{liang2021proximal} replaces the classical descent test with a criterion based on the proximal subproblem value and establishes an $O(\epsilon^{-2})$ complexity for nonsmooth problems. A subsequent unified analysis~\cite{liang2024unified} covers a hybrid regularity condition interpolating between the bounded-subgradient and $L$-smooth settings, and derives $\mathcal{O}(\epsilon^{-\frac{2}{1+\nu}}\log(1/\epsilon))$ for H\"older-smooth objectives. Closest to our work, Liang and Chen~\cite{liang2026proximal} analyze a bundle subroutine directly under H\"older smoothness and obtain $\mathcal{O}(\epsilon^{-\frac{2}{1+\nu}}\log(1/\epsilon))$. Their analysis is restricted to the full cutting-plane model and incurs an additional logarithmic factor. In this work, we explicitly analyze the bundle iterations, called \ProDes{}, which solve the true proximal subproblem with a non-asymptotic rate adapted to H\"older smoothness. Our analysis accommodates the general bundle model in \cref{assump:bm-single} and removes the extra logarithmic factor in \cite{liang2026proximal,liang2024unified}.

Beyond convergence and complexity analysis, numerous variants of PBM have been developed for different problem classes and applications. These include the bundle-level method \cite{lemarechal1995new}, trust-region bundle methods \cite{astorino2011piecewise}, proximal bundle methods for nonconvex problems \cite{liao2025proximal,hare2009computing,hare2010redistributed,liang2023proximal}, spectral bundle methods for semidefinite programming \cite{ding2023revisiting,helmberg2000spectral,liao2026overview}, stochastic proximal bundle methods \cite{liang2024single,liang2025multi}, bundle methods for problems with a $\mathcal{V}\mathcal{U}$-structure \cite{mifflin2012science,mifflin2005vu}, and inexact bundle methods \cite{de2020bundle,hare2016proximal}. When applied to dual problems, PBM also exhibits a close primal-dual relationship with augmented Lagrangian methods \cite{lemarechal2001lagrangian,liao2025bundle}. Related developments include a duality between the cutting-plane scheme and the conditional
gradient method~\cite{liang2025primal}, and an interpretation of null steps as Frank-Wolfe iterations~\cite{fersztand2024modified}. Other recent developments include adaptive or parameter-free schemes~\cite{guigues2026universal,monteiro2024parameter,guigues2026complexity}. 

Among these variants, the bundle-level method has been combined with acceleration techniques to obtain universally optimal methods for H{\"o}lder smooth functions \cite{lan2015bundle}. In particular, the method of \cite{lan2015bundle} achieves the optimal $\bigO(\epsilon^{-1/2})$ complexity in the smooth convex setting. Our recent work \cite{liao2025accelerated} showed that the classical PBM can also attain the optimal $\bigO(\epsilon^{-1/2})$ rate when integrated with the Nesterov-type acceleration. Whether the classical PBM admits a universally accelerated rate of $O(\epsilon^{-\frac{2}{1+3\nu}})$ as in the universal fast gradient method \cite{nesterov2015universal} remains open, and we expect it to require an acceleration mechanism beyond the standard bundle update.

\subsection{Paper Outline}
\Cref{section:prelimianries} introduces necessary background on the proximal point method and the classical proximal bundle method. In \Cref{sec:bundle-terations}, we interpret the classical PBM as a double-loop algorithm, analyze the inner bundle iterations, and establish their convergence under H{\"o}lder smoothness. \Cref{sec:revisit-classical-PBM} revisits the classical PBM and establishes convergence guarantees for the composite problem \cref{eq:pb-main}. In \Cref{sec:PBM-variant}, we introduce a new PBM variant based on the absolute error test \cref{eq:new-test} and also compare it with modern PBM designs \cite{liang2021proximal,liang2024unified,liang2026proximal,liang2025primal,
fersztand2024modified}. We present numerical experiments in \cref{sec:numerics}. Finally, \cref{sec:conclusion} concludes the paper. Some additional proof details are presented in the appendix.     

\section{Preliminaries} \label{section:prelimianries}

\subsection{The proximal point method} \vspace{-1mm}
Consider the convex optimization problem \cref{eq:pb-main}. Given an initial point $x_0 \in \dom F$,  the proximal point method (PPM) generates a sequence of iterates as follows 
\begin{equation} \label{eq:PPM-iterate}
    x_{k+1} = \prox_{F/\rho}(x_k)
    := \argmin_{x\in\R^n}
    \left\{
        F(x)+\frac{\rho}{2}\|x-x_k\|^2
    \right\},
    \qquad k=0,1,2,\ldots,
\end{equation}
where $\rho > 0$ is a proximal parameter. The quadratic term in \cref{eq:PPM-iterate} makes each proximal subproblem strongly convex, which admits a unique minimizer. The PPM iterates are thus
well-defined. The convergence of the PPM for nonsmooth convex optimization has been studied since the 1970s \cite{rockafellar1976monotone}, and its iterates satisfy \(F(x_k)-F^\star=\mathcal{O}(1/k)\); see, for example, \cite[Theorem~2.1]{guler1991convergence}. Although the proximal updates are generally not directly tractable, the PPM provides a conceptual foundation
for many proximal and bundle-type algorithms
\cite{drusvyatskiy2017proximal,diaz2023optimal,liang2021proximal,
liao2025bundle,liao2025proximal}.

In practice, one often computes an approximate proximal point $
  x_{k+1}
    \approx
    \prox_{F/\rho}(x_k)
 $. 
A classical inexactness criterion \cite{rockafellar1976monotone} requires \vspace{-3mm}
\begin{equation} \label{eq:Rockafellar-1}
    \|x_{k+1}-\prox_{F/\rho}(x_k)\|
    \leq \epsilon_k,
    \,
    \sum_{k=0}^{\infty}\epsilon_k<\infty.
\end{equation}
This summable error condition ensures convergence of the inexact PPM whenever a solution exists \cite{rockafellar1976monotone}. Another common criterion requires an approximate solution in objective value:
\begin{equation}
\label{eq:inexact-func}
    F(x_{k+1})
    +\frac{\rho}{2}\|x_{k+1}-x_k\|^2
    \leq
    \min_{x\in\R^n}
    \left\{
        F(x)+\frac{\rho}{2}\|x-x_k\|^2
    \right\}
    +\eta_k,
\end{equation}
where \(\eta_k\geq0\) is a suitable error; see, e.g., \cite{salzo2012inexact,lin2018catalyst}. Note that both criteria involve quantities that are generally unavailable: the exact proximal point in \cref{eq:Rockafellar-1} and the exact proximal value~in~\cref{eq:inexact-func}.

Proximal bundle methods exploit the composite structure \(F=f+h\), where \(h\) is treated exactly in the proximal update and \(f\) may be nonsmooth and difficult to minimize directly. They~approximate \(f\) with a tractable lower model \(f_k\), and solve the resulting proximal subproblem with \(F_k:=f_k+h\). The model error \(F(z)-F_k(z)=f(z)-f_k(z)\) at a candidate point \(z\) provides a computable measure of inexactness, offering an implementable
alternative to the criteria \cref{eq:Rockafellar-1,eq:inexact-func}. 

\vspace{-1mm}
\subsection{Classical proximal bundle methods}
\label{subsec:PBM}
 We here formally introduce the family of proximal bundle methods to solve the composite convex problem \cref{eq:pb-main}. The standard analyses most closely related to ours \cite{diaz2023optimal} are developed~primarily for the unconstrained setting, corresponding to \(h\equiv 0\). We slightly extend the setting in \cite{diaz2023optimal,kiwiel2000efficiency} and consider the general composite case \cref{eq:pb-main}. Similar composite formulations have also been used in modern proximal bundle methods; see, for example, \cite{liang2021proximal,liang2023proximal,liang2024unified}. 

Conventionally, classical PBMs are presented as single-loop schemes with a dynamic descent test. At the $k$-th iteration, the method constructs a convex lower model $f_k: \mathbb{R}^n \to \mathbb{R}$ of $f$ and computes a candidate point $z_{k+1}$ by solving the proximal problem 
\begin{align}
    \label{eq:pbm-subproblem-single}
    z_{k+1} = \arg \min_{x} \;f_k(x)+ h(x) +\frac{\rho}{2}\| x- y_k\|^2,
\end{align}
where \(y_k\) is the current prox center and \(\rho>0\) is the proximal parameter. Unlike other model-based algorithms, the bundle method does not immediately move the next iterate to $z_{k+1}$. Instead, it tests whether this candidate $z_{k+1}$  makes sufficient progress. In particular, the classical PBM checks the descent condition \cref{eq:PMB-test-intro}  with the current center $\bar x=y_k$. Thus, the test becomes
$
F(y_k)-F(z_{k+1})
\geq
\beta\bigl(F(y_k)-F_k(z_{k+1})\bigr).
$  
The test \cref{eq:PMB-test-intro} means that the candidate achieves at least $\beta$-fraction of the decrease predicted by the model. If \cref{eq:PMB-test-intro} holds, the candidate is accepted, and the proximal center is updated according to \(y_{k+1}=z_{k+1}\); this is called a \emph{serious step} (or \emph{descent step}). Otherwise, the candidate is rejected as a new center and \(y_{k+1}=y_k\); this is called a \emph{null step}. Although a null step leaves the prox center unchanged, the new function and subgradient information obtained at \(z_{k+1}\) is used to refine the bundle model  $f_{k+1}$. The classical PBM is listed in \Cref{alg:bundle}. 

\begin{algorithm}[t]
\caption{Classical Proximal Bundle method (PBM)}\label{alg:bundle}
\begin{algorithmic}[1]
\Require $y_0 \in \dom F, \rho > 0, \beta \in (0,1),  k_{\max}\in \mathbb{N}$
\State Set $z_0 = y_0$.
\For{$k=0,1,2, \ldots,k_{\max} -1 $}
    \State Construct the bundle model $f_{k}$ following \cref{assump:bm-single}.
    \State Compute the candidate point $z_{k+1}$ by \cref{eq:pbm-subproblem-single}. 
    \If{\cref{eq:PMB-test-intro} is satisfied}
        \State Set $y_{k+1} = z_{k+1}$, \hfill \Comment{\textit{descent step}}
    \Else
        \State Set $y_{k+1} = y_k$.  \hfill \Comment{\textit{null step}}
    \EndIf
\EndFor
\end{algorithmic}
\end{algorithm}

To state the bundle model assumption, we note that the first-order optimality for \cref{eq:pbm-subproblem-single} gives 
\begin{equation}
0 \in \partial f_k(z_{k+1})+ \partial h(z_{k+1}) +{\rho}(z_{k+1}- y_k). 
\end{equation}
Here, for a proper convex function
\(\varphi:\R^n\to\R\cup\{+\infty\}\), its subdifferential at
\(x\in\dom\varphi\) is
$
    \partial\varphi(x)
    :=
    \left\{
        g\in\R^n:
        \varphi(u)\geq
        \varphi(x)+\innerproduct{g}{u-x}
        \ \text{for all }u\in\R^n
    \right\}.
$ 
Its inexact version with inexactness $\epsilon \geq 0$ is defined as $
    \partial_{\epsilon}\varphi(x)
    :=
    \left\{
        g\in\R^n:
        \varphi(u)\geq
        \varphi(x)+\innerproduct{g}{u-x} - \epsilon
        \ \text{for all }u\in\R^n
    \right\}.
$ 
Consequently, there exist
\(a_{k+1}\in\partial f_k(z_{k+1})\) and
\(b_{k+1}\in\partial h(z_{k+1})\) such that $
    a_{k+1}+b_{k+1}
    =\rho(y_k-z_{k+1}). $ 
This motivates the aggregate condition below.

\begin{assumption}
\label{assump:bm-single}
For every \(k\geq 0\), the convex model \(f_k:\R^n\to\R\) satisfies the following conditions:
\begin{enumerate}[leftmargin=*]
    \setlength{\itemsep}{0pt}
    \item \textbf{Global Minorant.} The model is a lower approximation of \(f\):
    $
        f_k(x)\leq f(x),
        \, \forall x\in\R^n.
    $

    \item \textbf{New subgradient cut.} For some \(g_k\in\partial f(z_k)\), we have
    \begin{equation}
        \label{eq:bundle-subgradient}
        f_k(x)
        \geq
        f(z_k)+\innerproduct{g_k}{x-z_k},
        \qquad \forall x\in\R^n.
    \end{equation}

    \item \textbf{Aggregation cut.} If $k \geq 1$ and iteration \(k-1\) is a null step, then there exist
    $
        a_k\in\partial f_{k-1}(z_k)$, $b_k\in\partial h(z_k)
    $ 
    and $a_k+b_k =\rho(y_k-z_k)$ such that 
    \begin{align}
        f_k(x)
        &\geq
        f_{k-1}(z_k)+\innerproduct{a_k}{x-z_k},
        \qquad \forall x\in\R^n.       \label{eq:bundle-aggregation} 
    \end{align}
\end{enumerate}
\end{assumption}

Recall that \(F_k:=f_k+h\). The first requirement ensures that the quantity $F(y_k) - F_k(z_{k+1})$ in \cref{eq:PMB-test-intro} is always nonnegative and every serious step guarantees \(F(z_{k+1})\leq F(y_k)\). Condition \cref{eq:bundle-subgradient} incorporates the first-order information of the original $f$ at $z_{k}$, and condition \cref{eq:bundle-aggregation} retains suitable first-order information of the previous model $f_{k-1}$  at $z_{k}$. The aggregation  \cref{eq:bundle-aggregation}  also ensures that the proximal value
increases during a null step. Indeed, since \(y_k=y_{k-1}\), we have
\begin{align}
    F_{k-1}(z_k)+\frac{\rho}{2}\|z_k-y_k\|^2  &=
    \min_x\left\{
        F_{k-1}(z_k)
        +\innerproduct{a_k+b_k}{x-z_k}
        +\frac{\rho}{2}\|x-y_k\|^2
    \right\} \nonumber\\
    &\leq
    \min_x\left\{
        F_k(x)+\frac{\rho}{2}\|x-y_k\|^2
    \right\}, \nonumber
\end{align}
where we used \(a_k+b_k=\rho(y_k-z_k)\) and the inequality comes from \cref{eq:bundle-aggregation}  and the subgradient lower bound for $h$. This model improvement is central to proving finite termination of the null steps.

\begin{remark}[Oracle complexity]
\label{rem:oracle-complexity}
The main computational steps in \Cref{alg:bundle} are updating the bundle model \(f_k\) and solving the proximal subproblem \cref{eq:pbm-subproblem-single}. Their computational cost depends on both the model \(f_k\) and the term \(h\).~A classical choice is the full cutting-plane model
$
    f_k(x)
    =
    \max_{0\leq j\leq k}
    \left\{
        f(z_j)+\innerproduct{g_j}{x-z_j}
    \right\}.
$ 
When \(h\equiv0\), the corresponding proximal subproblem is a quadratic program. For general \(h\), it becomes a structured convex optimization problem. Alternatively,~after a null step, the essential requirements \cref{eq:bundle-subgradient,eq:bundle-aggregation} can be satisfied by
the two-cut model
$    f_k(x)
    =
    \max
    \left\{
        f(z_k)+\innerproduct{g_k}{x-z_k},
        f_{k-1}(z_k)+\innerproduct{a_k}{x-z_k}
    \right\}. $ 
When \(h\equiv0\), the associated proximal subproblem admits a closed-form solution \cite[claim 1]{diaz2023optimal}.  More generally, any bundle model satisfying \cref{assump:bm-single} yields the convergence guarantees developed below, including spectral bundle models used in eigenvalue optimization and semidefinite programming \cite{helmberg2000spectral,ding2023revisiting,liao2026overview}.
We measure complexity by the total number of serious and null steps. Since each iteration requires one first-order oracle call for \(f\), these iteration bounds also give first-order oracle complexity bounds. \hfill $\square$
\end{remark}

\section{Convergence of bundle iterations under H{\"o}lder smoothness}
\label{sec:bundle-terations}

We interpret the classical PBM as a double-loop algorithm, where the null steps serve as an inner procedure for approximately solving a proximal subproblem \cref{eq:PPM-iterate}. This perspective allows us to analyze the inner bundle iterations independently and establish convergence under H{\"o}lder smoothness.

\begin{algorithm}[t]
\caption{The classical PBM as a double-loop algorithm}
\label{alg:bundle-double-loop}
\begin{algorithmic}[1]
\Require \(y_0\in\dom F\), \(\rho>0\), \(\beta\in(0,1)\),
         \(k_{\max}\in\mathbb{N}\)
\State Let \(\mathcal{S}\) be the descent test
       \cref{eq:test-PBM};
\For{\(k=0,1,\ldots,k_{\max}-1\)}
    \State \(y_{k+1}=\text{\ProDes}(y_k,\rho,\mathcal{S})\);
\EndFor
\end{algorithmic}
\end{algorithm}

\subsection{A double-loop perspective of PBMs}

At a fixed center point $y_k \in \dom F$, the null steps in \Cref{alg:bundle} can be interpreted as an inner procedure for approximating the exact proximal update \(\prox_{F/\rho}(y_k)\), with the descent test \cref{eq:PMB-test-intro} as its stopping criterion. We denote this inner procedure by \ProDes{} and write the outer update~as
\begin{equation}
\label{eq:PBM}
    y_{k+1}
    =
    \text{\ProDes}(y_k,\rho,\mathcal{S}),
\end{equation}
where \(\mathcal{S}\) denotes a stopping criterion. The resulting double-loop formulation is presented in \Cref{alg:bundle-double-loop}, with the inner procedure detailed in \Cref{alg:Proxi-descent-subproblem}.

Unlike the single-loop formulation in \Cref{alg:bundle}, which indexes both null and descent steps together, the double-loop formulation uses \(k\) for descent steps and \(j\) for inner bundle iterations. For a fixed outer index \(k\), we suppress the dependence of the inner-loop quantities on $k$. Thus, $f_j$, $F_j$, and
$z_j$ should formally be understood as $f_{k,j}$, $F_{k,j}$, and $z_{k,j}$, respectively, where
$
F_j:=f_j+h.
$ 
Initialize \(z_0=y_k\), at each inner iteration \(j\geq0\), the subroutine constructs a convex lower model \( f_j\) of \(f\) and computes 
\begin{equation}
\label{eq:PBM-trial-point}
    z_{j+1}
    =
    \argmin_{x\in\R^n}
    \left\{
        f_j(x)+h(x)
        +\frac{\rho}{2}\|x-y_k\|^2
    \right\}.
\end{equation} 
The classical descent test \cref{eq:PMB-test-intro} becomes
\begin{equation}
\label{eq:test-PBM}
    F(y_k)-F(z_{j+1})
    \geq
    \beta\bigl(F(y_k)-F_j(z_{j+1})\bigr).
\end{equation}
If \cref{eq:test-PBM} holds, the subroutine returns \(z_{j+1}\);
otherwise, it updates the bundle model according to \Cref{assump:bm-single} and continues.

The inner models \(f_j\) satisfy \cref{assump:bm-single} under the corresponding inner-loop indexing. In particular, each \(f_j\) globally minorizes \(f\) and incorporates a subgradient cut at \(z_j\). For \(j\geq1\), the optimality condition of \cref{eq:PBM-trial-point} yields
$
    a_j\in\partial f_{j-1}(z_j),
    \,
    b_j\in\partial h(z_j),
    \,
    a_j+b_j=\rho(y_k-z_j),
$ 
and the updated model retains the corresponding aggregate cut:
$
     f_j(x)
    \geq
     f_{j-1}(z_j)
    +\innerproduct{a_j}{x-z_j},
    \, \forall x\in\R^n.
$ 
Thus, when the same bundle information is retained across outer iterations, the double-loop formulation in \Cref{alg:bundle-double-loop} simply reorganizes the null and descent steps of \Cref{alg:bundle}. We next analyze the inner procedure and use its guarantees to revisit the convergence of the classical PBM in \cref{sec:revisit-classical-PBM}.

\begin{algorithm}[t]
\caption{\ProDes($y_k,\rho,\mathcal{S}$)}\label{alg:Proxi-descent-subproblem}
\begin{algorithmic}[1]
\State Initialize $z_0  = y_k$;  
\For{$j=0,1,\ldots$}
    \State Construct $f_j$ satisfying \cref{assump:bm-single};
    \State Compute $z_{j+1}$ using \cref{eq:PBM-trial-point};  
    \If{$\mathcal{S}$ holds} Break;
    \EndIf
\EndFor
 \State \textbf{Return} $z_{j+1}$;  
\end{algorithmic}
\end{algorithm}

\subsection{Basic inequalities in the subroutine \ProDes{}}
\label{subsec:complexity-inner}

We here establish basic inequalities for measuring the progress of the inner bundle iterations. Fix a proximal center \(y_k=\bar x\in\dom F\), and denote the optimal value of the true proximal subproblem~by
\begin{align}   
    \label{eq:true-prox-sub}
     F_{1/\rho}(\bar x) :=  \min_{x\in \RR^n } \; F(x) + \frac{\rho}{2} \|x - \bar x \|^2.
\end{align}
The function $F_{1/\rho}$ is also called the Moreau envelope. 
At each inner iteration $j \geq 0$, we define the following useful quantities
\begin{subequations} \label{eq:errors-j-th-iteration}
\begin{align}
m_j &:= \min_y {f}_j(y)+h(y)+\frac{\rho}{2} \|y - \bar x \| ^2, \label{eq:errors-j-th-iteration-m} \\
\delta_j&:= F_{1/\rho} (\bar x ) -  m_j, \label{eq:errors-j-th-iteration-d}\\
e_j &:= f(z_{j+1}) - {f}_j(z_{j+1}). \label{eq:errors-j-th-iteration-e}
\end{align}
\end{subequations}
Here, \(m_j\) is the optimal value of the approximate proximal subproblem. Since \( f_j\leq f\), we have \(m_j\leq F_{1/\rho}(\bar x)\), and thus \(\delta_j\) measures the
gap between the true and approximate proximal values. Finally,  $e_j$ is the model error between $f$ and its lower approximation ${f}_j$ at the candidate point $z_{j+1}$. 
Unlike the proximal value gap \(\delta_j\), which depends on the unknown \(F_{1/\rho}(\bar x)\), the model error \(e_j\) is directly computable. We will show that, if we run \ProDes{} without termination, then the sequence $m_j$ increases monotonically to $F_{1/\rho} (\bar x )$. Consequently,  $\delta_j$  converges monotonically~to~zero.   

The following lemma shows useful relationships for the quantities in \cref{eq:errors-j-th-iteration}.
\begin{lemma}
\label{prop:properties:PBM}
Consider the \(j\)-th inner iteration of \ProDes{} and the quantities defined in \cref{eq:errors-j-th-iteration}. The following statements hold. 
\begin{enumerate}[leftmargin=*]
    \item \label{properties:PBM-1}
    The model error is an upper bound of the proximal value gap:
    $
        0\leq\delta_j\leq e_j.
    $
    \item \label{properties:PBM-3}
    The suboptimality of the proximal subproblem at $z_{j+1}$ satisfies
    \begin{equation}
    \label{eq:true-proximal-suboptimality}
        F(z_{j+1}) +\frac{\rho}{2}\|z_{j+1}-\bar x\|^2 -F_{1/\rho}(\bar x) = e_j-\delta_j \leq e_j.
    \end{equation}
    Consequently, $
        \bigl\| z_{j+1}-\prox_{F/\rho}(\bar x) \bigr\|
        \leq
        \sqrt{{2e_j}/{\rho}}.$
        \item \label{properties:PBM-2}
    The candidate $z_{j+1}$ satisfies the inexact proximal inclusion
    \begin{equation}
    \label{eq:z-inexact-update}
        z_{j+1} = \bar x-\frac{1}{\rho}s_{j+1},
        \qquad
        s_{j+1}\in\partial_{e_j}F(z_{j+1}).
    \end{equation}
    More precisely, there exist $ a_{j+1}\in\partial_{e_j}f(z_{j+1}),
        \, 
        b_{j+1}\in\partial h(z_{j+1}) $
    such that $ s_{j+1} = a_{j+1}+b_{j+1} = \rho(\bar x-z_{j+1}).$
\end{enumerate}
\end{lemma}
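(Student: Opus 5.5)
The plan is to exploit two facts: $z_{j+1}$ minimizes the model subproblem, and $f_j\le f$ globally. Write $\phi(x):=F(x)+\frac{\rho}{2}\|x-\bar x\|^2$ and $\phi_j(x):=F_j(x)+\frac{\rho}{2}\|x-\bar x\|^2$. Then $m_j=\phi_j(z_{j+1})$, $F_{1/\rho}(\bar x)=\min\phi$, and $\phi-\phi_j=f-f_j\ge 0$. The pointwise difference at the candidate is exactly $\phi(z_{j+1})-\phi_j(z_{j+1})=e_j$.

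For item~1, $\delta_j\ge 0$ follows from $\phi_j\le\phi$ by taking minima. For the upper bound, I use
\[
\delta_j=\min\phi-\phi_j(z_{j+1})\le\phi(z_{j+1})-\phi_j(z_{j+1})=e_j .
\]

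For item~2, I rewrite the suboptimality:
\[
\phi(z_{j+1})-\min\phi=\bigl(\phi(z_{j+1})-\phi_j(z_{j+1})\bigr)-\bigl(\min\phi-m_j\bigr)=e_j-\delta_j .
\]
This is at most $e_j$ because $\delta_j\ge0$. The distance bound then comes from $\rho$-strong convexity of $\phi$ with minimizer $x^\star:=\prox_{F/\rho}(\bar x)$. Strong convexity gives $\phi(z)-\phi(x^\star)\ge\frac{\rho}{2}\|z-x^\star\|^2$, hence $\|z_{j+1}-x^\star\|^2\le 2e_j/\rho$.

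For item~3, the optimality condition of \cref{eq:PBM-trial-point} gives $a_{j+1}\in\partial f_j(z_{j+1})$ and $b_{j+1}\in\partial h(z_{j+1})$ with $a_{j+1}+b_{j+1}=\rho(\bar x-z_{j+1})$. This uses the subdifferential sum rule, which is valid because $f_j$ is finite-valued and convex, so no constraint qualification is needed. The remaining step is to upgrade $a_{j+1}$ to an $e_j$-subgradient of $f$. For all $u$,
\[
f(u)\ge f_j(u)\ge f_j(z_{j+1})+\langle a_{j+1},u-z_{j+1}\rangle = f(z_{j+1})+\langle a_{j+1},u-z_{j+1}\rangle-e_j,
\]
so $a_{j+1}\in\partial_{e_j}f(z_{j+1})$. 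Adding this inequality to the exact subgradient inequality for $h$ yields $s_{j+1}:=a_{j+1}+b_{j+1}\in\partial_{e_j}F(z_{j+1})$, and $z_{j+1}=\bar x-\frac1\rho s_{j+1}$ follows by rearranging.

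I expect no real obstacle here. The only point needing care is justifying the sum rule in the optimality condition and confirming that $z_{j+1}$ exists and is unique, which holds by strong convexity and properness of $F_j$ with $\dom F_j=\dom h$ nonempty.
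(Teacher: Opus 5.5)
Your proposal is correct and follows essentially the same route as the paper. Both arguments use the identity $F(z_{j+1})+\frac{\rho}{2}\|z_{j+1}-\bar x\|^2-F_{1/\rho}(\bar x)=e_j-\delta_j$, which comes from optimality of $z_{j+1}$ and $f_j\le f$, then $\rho$-strong convexity for the distance bound, and the chain $f\ge f_j\ge f_j(z_{j+1})+\langle a_{j+1},\cdot-z_{j+1}\rangle$ to make $a_{j+1}$ an $e_j$-subgradient. Your justifications of the sum rule and of the existence of $z_{j+1}$ are correct and fill in points the paper leaves implicit.
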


\begin{proof}
By optimality of \(z_{j+1}\) in \cref{eq:PBM-trial-point}, we have 
$
    m_j
    =
     F_j(z_{j+1})
    +\frac{\rho}{2}\|z_{j+1}-\bar x\|^2.
$ 
Therefore,
\begin{align*}
    F(z_{j+1})
    +\frac{\rho}{2}\|z_{j+1}-\bar x\|^2
    -F_{1/\rho}(\bar x)
    &=
    F(z_{j+1})-F_j(z_{j+1})
    +m_j-F_{1/\rho}(\bar x)\\
    &=
    e_j-\delta_j.
\end{align*}
Since the left-hand side is nonnegative, it follows that
\(0\leq\delta_j\leq e_j\), proving
\cref{properties:PBM-1,eq:true-proximal-suboptimality}. The true proximal objective
$
    x\mapsto F(x)+\frac{\rho}{2}\|x-\bar x\|^2
$ 
is \(\rho\)-strongly convex. Hence, we have 
\[
    \frac{\rho}{2}
    \bigl\|
        z_{j+1}-\prox_{F/\rho}(\bar x)
    \bigr\|^2
    \leq
    F(z_{j+1})
    +\frac{\rho}{2}\|z_{j+1}-\bar x\|^2
    -F_{1/\rho}(\bar x)
    \leq e_j,
\]
which establishes $
        \bigl\|
            z_{j+1}-\prox_{F/\rho}(\bar x)
        \bigr\|
        \leq
        \sqrt{{2e_j}/{\rho}}.$

For \cref{properties:PBM-2}, the optimality condition of
\cref{eq:PBM-trial-point} and the subdifferential sum rule yield
\[
    a_{j+1}\in\partial f_j(z_{j+1}),
    \qquad
    b_{j+1}\in\partial h(z_{j+1}),
    \qquad
    a_{j+1}+b_{j+1}
    =
    \rho(\bar x-z_{j+1}).
\]
Since \(f_j\leq f\), for every \(x\in\R^n\), we have 
\begin{align*}
    f(x)
    \geq
     f_j(x)
    &\geq
     f_j(z_{j+1})
    +\innerproduct{a_{j+1}}{x-z_{j+1}}\\
    &=
    f(z_{j+1})
    +\innerproduct{a_{j+1}}{x-z_{j+1}}
    -e_j.
\end{align*}
This implies \(a_{j+1}\in\partial_{e_j}f(z_{j+1})\). Combining this inequality with
the subgradient inequality for \(h\) gives
$
    a_{j+1}+b_{j+1}
    \in
    \partial_{e_j}F(z_{j+1}),
$ 
which proves \cref{eq:z-inexact-update}.
\end{proof}

\cref{prop:properties:PBM} shows that $e_j$ is a  \textit{computable} model error for three certificates: 1) the gap between the true proximal value and the approximated proximal value, i.e., $\delta_j$; 2) the subgradient inexactness of the direction $s_{j+1}$; and 3) the suboptimality of $z_{j+1}$ to the true proximal subproblem \cref{eq:true-prox-sub}.

 As discussed above, the inner iterations of \ProDes{} progressively approximate the true proximal subproblem. We next quantify the improvement in \(m_j\) and the reduction in \(\delta_j\).

\begin{lemma}[Improvement of the inner update]
\label{lemma:improvement}
Consider the subroutine \ProDes{} in \Cref{alg:Proxi-descent-subproblem}. The quantities in
\cref{eq:errors-j-th-iteration} satisfy
\begin{subequations}
\begin{align}
    m_{j+1}-m_j
    &\geq
    \frac{\rho}{2}\|z_{j+2}-z_{j+1}\|^2,
    \label{eq:inner-improvment-m}\\
    \delta_j-\delta_{j+1}
    &\geq
    \frac{\rho}{2}\|z_{j+2}-z_{j+1}\|^2.
    \label{eq:inner-improvment-delta}
\end{align}
\end{subequations}
\end{lemma}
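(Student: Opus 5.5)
The plan is to use the aggregation cut in \cref{assump:bm-single}, specialized to the inner indexing, together with the $\rho$-strong convexity of the model proximal objective at iteration $j+1$. Note that within \ProDes{} every iteration before termination is a null step, so the aggregation cut is available at each inner step. Since $\delta_j = F_{1/\rho}(\bar x) - m_j$ and $F_{1/\rho}(\bar x)$ does not depend on $j$, we have $\delta_j - \delta_{j+1} = m_{j+1} - m_j$. Thus \cref{eq:inner-improvment-delta} follows immediately from \cref{eq:inner-improvment-m}, and it suffices to prove the latter.

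\textbf{Step 1: a quadratic minorant of the next model.} Define
\[
\ell_j(x) := F_j(z_{j+1}) + \innerproduct{a_{j+1}+b_{j+1}}{x - z_{j+1}} + \frac{\rho}{2}\|x-\bar x\|^2 ,
\]
where $a_{j+1}\in\partial f_j(z_{j+1})$, $b_{j+1}\in\partial h(z_{j+1})$, and $a_{j+1}+b_{j+1} = \rho(\bar x - z_{j+1})$, as supplied by the optimality condition of \cref{eq:PBM-trial-point}. The aggregation cut gives $f_{j+1}(x)\ge f_j(z_{j+1}) + \innerproduct{a_{j+1}}{x-z_{j+1}}$, and the subgradient inequality gives $h(x)\ge h(z_{j+1})+\innerproduct{b_{j+1}}{x-z_{j+1}}$. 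Adding these yields $F_{j+1}(x)+\frac{\rho}{2}\|x-\bar x\|^2 \ge \ell_j(x)$ for all $x$.

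\textbf{Step 2: exact expansion of $\ell_j$.} The function $\ell_j$ is a quadratic with Hessian $\rho I$ whose gradient vanishes at $z_{j+1}$, because $a_{j+1}+b_{j+1}+\rho(z_{j+1}-\bar x)=0$. Hence
\[
\ell_j(x) = \ell_j(z_{j+1}) + \frac{\rho}{2}\|x - z_{j+1}\|^2 = m_j + \frac{\rho}{2}\|x-z_{j+1}\|^2 .
\]

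\textbf{Step 3: evaluate at the next candidate.} Setting $x = z_{j+2}$ in Steps 1 and 2, and using $m_{j+1} = F_{j+1}(z_{j+2}) + \frac{\rho}{2}\|z_{j+2}-\bar x\|^2$, gives $m_{j+1} \ge m_j + \frac{\rho}{2}\|z_{j+2}-z_{j+1}\|^2$, which is \cref{eq:inner-improvment-m}.

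There is no real obstacle. The one point needing care is Step 2: the quadratic must be evaluated exactly at $z_{j+2}$, rather than merely minimized as in the displayed inequality preceding \cref{rem:oracle-complexity}, since minimizing would lose the $\frac{\rho}{2}\|z_{j+2}-z_{j+1}\|^2$ term.
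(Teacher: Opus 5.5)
Your proof is correct and matches the paper's argument: the aggregation cut plus the subgradient inequality for $h$ give the quadratic minorant $F_{j+1}(x)+\frac{\rho}{2}\|x-\bar x\|^2\ge m_j+\frac{\rho}{2}\|x-z_{j+1}\|^2$, which is then evaluated at $z_{j+2}$. Your Step 2, expanding a quadratic whose gradient vanishes at $z_{j+1}$, is the same as the paper's three-point identity, phrased differently.
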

\begin{proof}
By the definition of $\delta_j = F_{1/\rho} (\bar x ) -  m_j$, it is clear that \cref{eq:inner-improvment-m} and \cref{eq:inner-improvment-delta} are equivalent.  It therefore suffices to prove
\cref{eq:inner-improvment-m}.

By the aggregate-cut condition, there exist
$
    a_{j+1}\in\partial f_j(z_{j+1}),
    \
    b_{j+1}\in\partial h(z_{j+1}),
    \
    a_{j+1}+b_{j+1}
    =
    \rho(\bar x-z_{j+1}),
$ 
such that
$
   f_{j+1}(x)
    \geq
   f_j(z_{j+1})
    +\innerproduct{a_{j+1}}{x-z_{j+1}}.
$ 
Combining this inequality with the subgradient inequality
$
    h(x)
    \geq
    h(z_{j+1})
    +\innerproduct{b_{j+1}}{x-z_{j+1}}
$ 
gives the composite model bound
\begin{equation}
\label{eq:assumption-3-bound}
    F_{j+1}(x)
    \geq
    F_j(z_{j+1})
    +
    \innerproduct{
        \rho(\bar x-z_{j+1})
    }{
        x-z_{j+1}
    },
    \qquad \forall x\in\R^n.
\end{equation}
Using the three-point identity
\[
    \innerproduct{
        \rho(\bar x-z_{j+1})
    }{
        x-z_{j+1}
    }
    =
    \frac{\rho}{2}\|x-z_{j+1}\|^2
    +\frac{\rho}{2}\|z_{j+1}-\bar x\|^2
    -\frac{\rho}{2}\|x-\bar x\|^2,
\]
we obtain
\begin{equation}
\label{eq:assumption-3-bound-2}
    F_{j+1}(x)
    +\frac{\rho}{2}\|x-\bar x\|^2
    \geq
    m_j
    +\frac{\rho}{2}\|x-z_{j+1}\|^2,
    \qquad \forall x\in\R^n,
\end{equation}
where we used
$
    m_j
    =
    F_j(z_{j+1})
    +\frac{\rho}{2}\|z_{j+1}-\bar x\|^2.
$ 
Setting \(x=z_{j+2}\) in
\cref{eq:assumption-3-bound-2} yields
\[
    m_{j+1}
    \geq
    m_j
    +\frac{\rho}{2}\|z_{j+2}-z_{j+1}\|^2,
\]
which proves the result.
\end{proof}

\Cref{lemma:improvement} shows that every null step monotonically increases \(m_j\) and decreases the proximal-value gap \(\delta_j\), thus providing a better approximation to the true proximal subproblem \cref{eq:true-prox-sub}. The lemma alone, however, does not show that \(\delta_j\) converges to zero. In the next subsection, we consider the class of  H{\"o}lder-smooth functions, which enables us to connect $\|z_{j+2} - z_{j+1}\|$ with $\delta_j$, and then establish its non-asymptotic convergence guarantees.

\subsection{Convergence for H{\"o}lder-smooth functions}
\label{subsec:Lipscitz} 
Recall that the function $f$ is H{\"o}lder-smooth if \cref{eq:weakly-smooth-intro} holds. For notational convenience, we define two quantities that will be used in our subsequent analysis
\begin{align} \label{eq:two-constants}
    C_{\nu} = \frac{L_\nu}{1+\nu}, \quad G_{\nu} = C_{\nu} ^{-\frac{2}{1+\nu}}.
\end{align}
It is known that \cref{eq:weakly-smooth-intro} implies that the function $f$ has an upper bound \cite{nesterov2015universal}
\begin{align}
\label{eq:weakly-smooth-conseq}
    f(y)
\le
f(x)+\innerproduct{\nabla f(x)}{y-x}
+
C_{\nu} \|y-x\|^{1+\nu}, \quad \forall x,y \in \RR^n,
\end{align}
which is a crucial inequality to establish the convergence rate. 
In particular, using \cref{eq:weakly-smooth-conseq}, we can establish a key inequality that upper bounds the error $e_j$.  
\begin{lemma}
    \label{lemma:lower-bound-step-difference}
   Consider \ProDes{} in \Cref{alg:Proxi-descent-subproblem} and the model error $e_j$ defined in \cref{eq:errors-j-th-iteration-e}. Suppose $f$ satisfies \cref{eq:weakly-smooth-intro}. Then we have $e_j\leq C_{\nu} \|z_{j+1}-z_j\|^{1+\nu}$ for all $j\ge 0$. 
\end{lemma}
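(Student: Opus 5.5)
The argument combines the new subgradient cut in \cref{assump:bm-single} with the H\"older upper bound \cref{eq:weakly-smooth-conseq}, both taken at the point \(z_j\) where the cut was generated.

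By the new subgradient cut \cref{eq:bundle-subgradient}, applied to the inner model \(f_j\), there is \(g_j\in\partial f(z_j)\) with
\[
    f_j(x)\geq f(z_j)+\innerproduct{g_j}{x-z_j},\qquad \forall x\in\R^n.
\]
Evaluating this at \(x=z_{j+1}\) gives a lower bound on \(f_j(z_{j+1})\).

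For the upper bound on \(f(z_{j+1})\), apply \cref{eq:weakly-smooth-conseq} with \(x=z_j\), \(y=z_{j+1}\), and \(\nabla f(z_j)\) replaced by the same subgradient \(g_j\):
\[
    f(z_{j+1})\leq f(z_j)+\innerproduct{g_j}{z_{j+1}-z_j}+C_\nu\|z_{j+1}-z_j\|^{1+\nu}.
\]

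Subtracting the two inequalities cancels the linear part \(f(z_j)+\innerproduct{g_j}{z_{j+1}-z_j}\). What remains is
\[
    e_j=f(z_{j+1})-f_j(z_{j+1})\leq C_\nu\|z_{j+1}-z_j\|^{1+\nu},
\]
which is the claim.

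\textbf{Main point to check.} Inequality \cref{eq:weakly-smooth-conseq} must hold with the specific subgradient \(g_j\) used in the cut. This is where the paper's convention matters: in \cref{eq:weakly-smooth-intro}, \(\nabla f\) denotes an arbitrary subgradient. The standard integral argument then goes through as follows.

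First, convexity makes \(f\) absolutely continuous along the segment from \(x\) to \(y\). Its derivative is \(\innerproduct{\nabla f(x+t(y-x))}{y-x}\) for almost every \(t\), for any choice of subgradient at each point. This gives
\[
    f(y)-f(x)-\innerproduct{g}{y-x}=\int_0^1\innerproduct{\nabla f(x+t(y-x))-g}{y-x}\,dt\leq\int_0^1 L_\nu t^\nu\|y-x\|^{1+\nu}\,dt=C_\nu\|y-x\|^{1+\nu},
\]
valid for any \(g\in\partial f(x)\).

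The case \(j=0\) needs no separate treatment, because \(z_0=\bar x\) and the cut at \(z_0\) is included in \(f_0\) by \cref{assump:bm-single}.
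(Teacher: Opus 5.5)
Your proof is correct and matches the paper's argument: evaluate the new subgradient cut at $z_j$ at the point $z_{j+1}$, and combine it with the H\"older upper bound \cref{eq:weakly-smooth-conseq} taken at $x=z_j$, $y=z_{j+1}$, so that the linear parts cancel. Your integral argument showing that \cref{eq:weakly-smooth-conseq} holds for the particular subgradient $g_j$ in the cut is a detail the paper does not prove but simply cites. It is sound under the paper's convention that \cref{eq:weakly-smooth-intro} holds for arbitrary subgradient selections.
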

\begin{proof}
    From new subgradient cut in \cref{assump:bm-single} with $y = z_{j+1}$, we have 
    $
        f_{j}(z_{j+1}) \geq f(z_j) +    \innerproduct{ \nabla f(z_j)}{z_{j+1}-z_{j}}.
    $  
    Thus, it follows that 
    \begin{align*}
         e_j =\;  f(z_{j+1}) - f_j(z_{j+1}) 
        \leq  \; &f(z_{j+1}) - (f(z_j) +    \innerproduct{ \nabla f(z_j)}{z_{j+1}-z_{j}} ) \\
        \leq \;&C_{\nu}\|z_{j+1}-z_{j}\|^{1+\nu},
    \end{align*}
    where the last inequality applies \cref{eq:weakly-smooth-conseq}.
\end{proof}
Combining \cref{prop:properties:PBM} with \cref{lemma:lower-bound-step-difference,lemma:improvement}, we immediately have the following recursion.
\begin{lemma}[Key recursion]  Consider \ProDes{} in \Cref{alg:Proxi-descent-subproblem} and the gap $\delta_j$ defined in \cref{eq:errors-j-th-iteration-d}. Suppose $f$ satisfies \cref{eq:weakly-smooth-intro}. The following recursion holds
    \label{lemma:Recursion}
    \begin{align}  \label{eq:recursion-weakly}
        \delta_{j+1} \leq \delta_j -   \frac{\rho}{2} G_{\nu} \times \delta_{j+1}^{\frac{2}{1+\nu}}, \quad \forall j \geq 0,
    \end{align}
    where $G_{\nu}$ is the constant defined in \cref{eq:two-constants}. 
\end{lemma}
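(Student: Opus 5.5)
The plan is to chain the three preceding lemmas, with the indices aligned so that the step length $\|z_{j+2}-z_{j+1}\|$ appearing in \cref{lemma:improvement} is exactly the quantity that controls $e_{j+1}$ in \cref{lemma:lower-bound-step-difference}.

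\textbf{Step 1: upper-bound $\delta_{j+1}$.} Apply \cref{prop:properties:PBM}\,(\ref{properties:PBM-1}) at index $j+1$, which gives $\delta_{j+1}\le e_{j+1}$. Then apply \cref{lemma:lower-bound-step-difference} at index $j+1$, which gives $e_{j+1}\le C_\nu\|z_{j+2}-z_{j+1}\|^{1+\nu}$. Together,
\[
\delta_{j+1}\le C_\nu\|z_{j+2}-z_{j+1}\|^{1+\nu}.
\]

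\textbf{Step 2: invert the bound.} Since $C_\nu\ge 0$ and $\delta_{j+1}\ge 0$, raise both sides to the power $\tfrac{2}{1+\nu}$ (a monotone map on $[0,\infty)$). This yields
\[
\|z_{j+2}-z_{j+1}\|^2\;\ge\;C_\nu^{-\frac{2}{1+\nu}}\,\delta_{j+1}^{\frac{2}{1+\nu}}\;=\;G_\nu\,\delta_{j+1}^{\frac{2}{1+\nu}}.
\]

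\textbf{Step 3: combine with the improvement lemma.} Substitute this into \cref{eq:inner-improvment-delta}:
\[
\delta_j-\delta_{j+1}\;\ge\;\tfrac{\rho}{2}\|z_{j+2}-z_{j+1}\|^2\;\ge\;\tfrac{\rho}{2}\,G_\nu\,\delta_{j+1}^{\frac{2}{1+\nu}},
\]
which rearranges to \cref{eq:recursion-weakly}.

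\textbf{Main obstacle.} The only delicate point is the index bookkeeping. \cref{lemma:lower-bound-step-difference} must be invoked at index $j+1$, using the new subgradient cut at $z_{j+1}$ in the model $f_{j+1}$; this cut requires iteration $j$ to be a null step, which holds in the untruncated inner loop.

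A minor edge case is $L_\nu=0$, where $G_\nu=+\infty$. In that case $\delta_{j+1}\le 0$, so $\delta_{j+1}=0$, and the recursion holds trivially under the convention $\infty\cdot 0=0$. Alternatively one can assume $L_\nu>0$ without loss of generality.
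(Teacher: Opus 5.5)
Your proposal is correct and is exactly the paper's argument: you obtain $\delta_{j+1}\le e_{j+1}\le C_\nu\|z_{j+2}-z_{j+1}\|^{1+\nu}$ from \cref{prop:properties:PBM} and \cref{lemma:lower-bound-step-difference} at index $j+1$, and substitute it into \cref{eq:inner-improvment-delta}. Your explicit inversion step and your $L_\nu=0$ remark are harmless additions; the paper implicitly takes $L_\nu>0$ so that $G_\nu$ is finite.
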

\begin{proof}
    The first statement in \Cref{prop:properties:PBM} guarantees that $\delta_j \leq e_j$ for any $j \geq 0$. \Cref{lemma:lower-bound-step-difference} with the index $j+1$ gives 
    $$  \delta_{j+1}\leq e_{j+1} \leq  C_{\nu} \|z_{j+2}-z_{j+1}\|^{1+\nu}.$$
    Substituting this inequality into \cref{eq:inner-improvment-delta} from \cref{lemma:improvement}, we get the desired inequality \cref{eq:recursion-weakly}.  
\end{proof}

The key recursion \cref{eq:recursion-weakly} implies that the value gap $\delta_j$ converges monotonically to zero with a non-asymptotic rate. To quantify its rate, we use the following elementary sequence estimate.
\begin{lemma}
\label{lemma:sequence-recursion}
Let $p \in [1,2]$, $c>0$, and let $\{a_j\}_{j\geq 0}$ be a nonnegative sequence with $a_0 >0$ satisfying
\begin{equation}
\label{eq:potential-sequence-recursion}
    a_j \leq a_{j-1} -  c a_j^p,
    \qquad j\geq 1.
\end{equation}
If $p=1$, then $a_j\leq {a_0}{(1+c)^{-j}}, \, j\geq 1.$
    If $p\in(1,2]$, then we have 
    \begin{equation}
    \label{eq:potential-sequence-simple-rate}
        a_j \leq \max\left\{ a_0 e^{-j/2},\left(\frac{2}{(p-1)cj}\right)^{1/(p-1)}
        \right\}, \qquad j\geq 1.
    \end{equation}
\end{lemma}
We present a proof in \Cref{app:sequence}. \cref{lemma:sequence-recursion} is in fact the key technical ingredient in all the subsequent convergence results. The upper bound \cref{eq:potential-sequence-simple-rate} may admit different forms. In this form, the initial value $a_0$ plays a small role as its influence decays to zero exponentially fast. Note that the second term in \cref{eq:potential-sequence-simple-rate} has a uniform constant, which is independent of $a_0$ but increases as $p $ approaches $1$. From \Cref{lemma:sequence-recursion}, for any $\epsilon\in(0,a_0)$, the number of iterations required to obtain $a_j\leq\epsilon$ is at most $\mathcal{O}(
            {\log(1/\epsilon)})$ if $p=1$ and $ \mathcal{O}(\epsilon^{1-p})$ if $p \in (1,2]$. 
Applying \Cref{lemma:sequence-recursion} to
\cref{eq:recursion-weakly} with
$
    p=\frac{2}{1+\nu},
    \,
    c=\frac{\rho}{2}G_\nu,
$
gives the following convergence guarantees. 

\begin{corollary}
\label{coro:convergence-d}
Consider \ProDes{} in \Cref{alg:Proxi-descent-subproblem} and the value gap
$\delta_j$ defined in \cref{eq:errors-j-th-iteration-d}. Suppose $f$
satisfies the H{\"o}lder smoothness condition \cref{eq:weakly-smooth-intro} with $L_\nu>0$.
If $\delta_0=0$, then $\delta_j=0$ for every $j\geq 0$.
Otherwise, for every $j\geq 1$,
\begin{equation}
\label{eq:convergence-d-holder}
    \delta_j
    \leq
    \begin{cases}
        \displaystyle
        \left(1+\frac{\rho}{L_1}\right)^{-j}\delta_0,
        & \text{if } \nu=1,
        \\[1.2em]
        \displaystyle
        \max\left\{
            \delta_0e^{-j/2},
            \left(
                \frac{4(1+\nu)}
                {(1-\nu)\rho G_\nu j}
            \right)^{\frac{1+\nu}{1-\nu}}
        \right\},
        & \text{if } \nu\in[0,1).
    \end{cases}
\end{equation}
In particular, for $\nu\in[0,1)$,  we have 
$
    \delta_j
    =
   \mathcal{O}\left(
        {
            L_\nu^{\frac{2}{1-\nu}}
        }{
            \rho^{-\frac{1+\nu}{1-\nu}}
        }
        j^{-\frac{1+\nu}{1-\nu}}
    \right).
$ 
\end{corollary}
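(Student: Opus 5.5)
The plan is to feed the key recursion of \Cref{lemma:Recursion} into the sequence estimate of \Cref{lemma:sequence-recursion}, taking $a_j=\delta_j$, $p=\frac{2}{1+\nu}\in(1,2]$ for $\nu\in[0,1)$ and $p=1$ for $\nu=1$, and $c=\frac{\rho}{2}G_\nu$. Two side issues come first. We need $\delta_j\ge 0$, which holds by \cref{prop:properties:PBM}. We also need the degenerate case $\delta_0=0$: by \cref{eq:inner-improvment-delta} the sequence $\delta_j$ is nonincreasing, so $0\le\delta_j\le\delta_0=0$ for all $j$. Hence we may assume $\delta_0>0$, which is exactly the hypothesis $a_0>0$ of \Cref{lemma:sequence-recursion}. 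Shifting the index $j\mapsto j-1$ in \cref{eq:recursion-weakly} gives the form $a_j\le a_{j-1}-ca_j^p$ for $j\ge1$.

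For $\nu=1$ we have $C_1=L_1/2$ and $G_1=C_1^{-1}=2/L_1$, so $c=\frac{\rho}{2}\cdot\frac{2}{L_1}=\rho/L_1$. The $p=1$ branch of \Cref{lemma:sequence-recursion} then yields $\delta_j\le(1+\rho/L_1)^{-j}\delta_0$ directly.

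For $\nu\in[0,1)$ we compute $p-1=\frac{1-\nu}{1+\nu}$, so
\[
\frac{2}{(p-1)cj}=\frac{2(1+\nu)}{(1-\nu)\frac{\rho}{2}G_\nu j}=\frac{4(1+\nu)}{(1-\nu)\rho G_\nu j},
\]
and $1/(p-1)=\frac{1+\nu}{1-\nu}$. Substituting these into \cref{eq:potential-sequence-simple-rate} gives exactly the stated maximum.

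For the big-O claim, note that $G_\nu^{-\frac{1+\nu}{1-\nu}}=C_\nu^{\frac{2}{1-\nu}}\propto L_\nu^{\frac{2}{1-\nu}}$, so the polynomial term is $\mathcal{O}\bigl(L_\nu^{\frac{2}{1-\nu}}\rho^{-\frac{1+\nu}{1-\nu}}j^{-\frac{1+\nu}{1-\nu}}\bigr)$. The exponential term $\delta_0e^{-j/2}$ decays faster than any polynomial, so it is eventually dominated, and the stated rate holds for $j$ large. The hypothesis $L_\nu>0$ is needed so that $G_\nu$ is finite.

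None of these steps is a real obstacle; the argument is bookkeeping. The points most likely to go wrong are the index shift, which must match the form $a_j\le a_{j-1}-ca_j^p$ required in \Cref{lemma:sequence-recursion}, and the constant arithmetic in the $\nu<1$ case, especially the factor $4=2\cdot2$.
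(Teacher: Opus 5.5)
Your proposal is correct and is essentially the paper's argument: the paper gets the corollary by applying \Cref{lemma:sequence-recursion} to \cref{eq:recursion-weakly} with $p=\frac{2}{1+\nu}$ and $c=\frac{\rho}{2}G_\nu$. Your handling of $\delta_0=0$ via the monotonicity in \cref{eq:inner-improvment-delta}, the index shift, and the constant checks ($G_1=2/L_1$, the factor $4=2\cdot 2$) make explicit the bookkeeping the paper leaves implicit.
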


The convergence rate of \ProDes{} for the proximal-value gap $\delta_j$ automatically adapts to the smoothness of $f$. Specifically, \cref{coro:convergence-d} establishes an $\bigO(j^{-1})$ rate in the nonsmooth case $\nu=0$, which improves to $\bigO(j^{-\frac{1+\nu}{1-\nu}})$ for $\nu\in(0,1)$ and becomes linear in the smooth case $\nu=1$. The rates for $\nu=0$ and $\nu=1$ match the corresponding objective-value rates of subgradient and gradient methods for strongly convex optimization. Unlike these methods, however, \ProDes{} requires no stepsize tuning and applies uniformly across all smoothness levels. 

The proximal-value gap $\delta_j$ characterizes the progress of \ProDes{}, but it is not computable due to the unknown quantity $F_{1/\rho}(\bar x)$. In contrast, the model error
$
    e_j
    =
    f(z_{j+1})-f_j(z_{j+1})
$ 
is directly computable and satisfies $\delta_j\leq e_j$ by \cref{prop:properties:PBM}. We next establish convergence guarantees for this computable model error.

\begin{corollary}
\label{lemma:error}
Consider \ProDes{} in \Cref{alg:Proxi-descent-subproblem} and the model error $e_j$ defined in \cref{eq:errors-j-th-iteration-e}. Suppose $f$ satisfies the H{\"o}lder smoothness condition \cref{eq:weakly-smooth-intro} with $L_\nu>0$. The following statements hold.
\begin{itemize}
    \item If $\nu=1$, then
    \begin{equation}
    \label{eq:model-error-smooth}
        e_{j+1}
        \leq
        \frac{L_1}{\rho}
        \left(1+\frac{\rho}{L_1}\right)^{-j}
        \delta_0,
        \qquad j\geq 0.
    \end{equation}

    \item If $\nu\in[0,1)$, then, for every $t\geq 1$,
    \begin{align}
    \label{eq:model-error-holder}
        \min_{t+1\leq j\leq 2t}e_j
        \leq
        \left(
            \frac{2\delta_t}{\rho G_\nu t}
        \right)^{\frac{1+\nu}{2}}
        \leq
        \max\left\{
            \delta_0e^{-t/2},
            \left(
                \frac{4(1+\nu)}
                {(1-\nu)\rho G_\nu t}
            \right)^{\frac{1+\nu}{1-\nu}}
        \right\}.
    \end{align}
    In particular,
    $
        \min_{t+1\leq j\leq 2t}e_j
        =
        \mathcal{O}\left(
            {
                L_\nu^{\frac{2}{1-\nu}}
            }{
                \rho^{-\frac{1+\nu}{1-\nu}}
            }
            t^{-\frac{1+\nu}{1-\nu}}
        \right).
    $ 
\end{itemize}
\end{corollary}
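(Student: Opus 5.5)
The plan is to combine \Cref{lemma:lower-bound-step-difference} (which bounds $e_{j+1}$ by a power of the step length $\|z_{j+2}-z_{j+1}\|$) with \cref{eq:inner-improvment-delta} (which bounds the squared step length by the decrease of $\delta$). Then we feed in the rate for $\delta$ from \cref{coro:convergence-d}.

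\textbf{Case $\nu=1$.} By \Cref{lemma:lower-bound-step-difference} with index $j+1$, we have $e_{j+1}\le \frac{L_1}{2}\|z_{j+2}-z_{j+1}\|^2$. By \cref{eq:inner-improvment-delta}, $\frac{\rho}{2}\|z_{j+2}-z_{j+1}\|^2\le \delta_j-\delta_{j+1}\le\delta_j$. Hence
\[
e_{j+1}\le \frac{L_1}{\rho}\delta_j\le \frac{L_1}{\rho}\Bigl(1+\frac{\rho}{L_1}\Bigr)^{-j}\delta_0,
\]
where the last step uses \cref{eq:convergence-d-holder} for $j\ge1$. For $j=0$ the bound is immediate.

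\textbf{Case $\nu\in[0,1)$.} Fix $t\ge1$. For each $j\in\{t,\dots,2t-1\}$, \Cref{lemma:lower-bound-step-difference} gives
\[
e_{j+1}^{\frac{2}{1+\nu}}\le C_\nu^{\frac{2}{1+\nu}}\|z_{j+2}-z_{j+1}\|^2=\frac{1}{G_\nu}\|z_{j+2}-z_{j+1}\|^2\le \frac{2}{\rho G_\nu}(\delta_j-\delta_{j+1}).
\]
Summing over these $t$ indices telescopes to $\frac{2}{\rho G_\nu}(\delta_t-\delta_{2t})\le \frac{2\delta_t}{\rho G_\nu}$. Bounding the minimum by the average gives
\[
\min_{t+1\le j\le 2t}e_j^{\frac{2}{1+\nu}}\le \frac{2\delta_t}{\rho G_\nu t},
\]
which is the first inequality in \cref{eq:model-error-holder}.

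For the second inequality, we substitute the bound of \cref{coro:convergence-d} for $\delta_t$, splitting according to which term of the max is active. If $\delta_t\le B:=\bigl(\frac{4(1+\nu)}{(1-\nu)\rho G_\nu t}\bigr)^{\frac{1+\nu}{1-\nu}}$, set $q:=\frac{2}{\rho G_\nu t}$. Then
\[
(q\delta_t)^{\frac{1+\nu}{2}}\le q^{\frac{1+\nu}{2}}B^{\frac{1+\nu}{2}}.
\]
Write $B=(\kappa q)^{\frac{1+\nu}{1-\nu}}$ with $\kappa=\frac{2(1+\nu)}{1-\nu}\ge 2$. The right-hand side becomes $\kappa^{-\frac{1+\nu}{2}}B^{\frac{1+\nu}{2}+\frac{1-\nu}{2}}\le B$, since the exponents sum to one.

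If instead $\delta_t\le \delta_0 e^{-t/2}$ with this term active, the main subtlety is comparing $(q\delta_t)^{(1+\nu)/2}$ with $\delta_t$. Two cases:
\begin{itemize}
\item If $q\delta_t\le \delta_t^{2/(1+\nu)}$, the bound $\le\delta_t\le\delta_0e^{-t/2}$ follows directly.
\item Otherwise $\delta_t< q^{\frac{1+\nu}{1-\nu}}\le B$, and we are back in the previous case.
\end{itemize}
So in all cases the result is at most the max. The $\mathcal{O}$ statement follows by inserting $G_\nu=(L_\nu/(1+\nu))^{-2/(1+\nu)}$.

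I expect this last bookkeeping step, matching the exact constants of the max, to be the only delicate part.
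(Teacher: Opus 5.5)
Your proof is correct and follows the paper's route. You combine \Cref{lemma:lower-bound-step-difference} with \cref{eq:inner-improvment-delta} to get $\frac{\rho}{2}G_\nu e_{j+1}^{2/(1+\nu)}\le\delta_j-\delta_{j+1}$, use the rate for $\delta_j$ when $\nu=1$, and telescope over the dyadic window with minimum $\le$ average when $\nu\in[0,1)$. Your case split in the last step is valid but unnecessary: with $M_t:=\max\{\delta_0e^{-t/2},B\}$, the paper notes $\delta_t\le M_t$ and $q\le\kappa q=B^{\frac{1-\nu}{1+\nu}}\le M_t^{\frac{1-\nu}{1+\nu}}$, which gives $(q\delta_t)^{\frac{1+\nu}{2}}\le M_t$ in one line.
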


\begin{proof} The proof uses the rates in \cref{coro:convergence-d}. 
We first observe the following key inequality
    \begin{align}
         \label{eq:dynamic-error}
        \frac{\rho}{2}G_{\nu} e_{j+1}^ {\frac{2}{1+\nu}}  \leq  \delta_j - \delta_{j+1},  
    \end{align}    
    which directly follows from combining \cref{eq:inner-improvment-delta} with \cref{lemma:lower-bound-step-difference}. Then, we discuss the two cases. 
    
Suppose first that $\nu=1$. Since $G_1=2/L_1$,
from \cref{eq:dynamic-error} and \cref{coro:convergence-d}, we have
$
    \frac{\rho}{L_1}e_{j+1}
    \leq
    \delta_j-\delta_{j+1}
    \leq
    \delta_j
    \leq
    \left(1+\frac{\rho}{L_1}\right)^{-j}\delta_0,
$ 
which proves \cref{eq:model-error-smooth}. 

Now suppose $\nu\in[0,1)$. Summing \cref{eq:dynamic-error} from $j=t$ to $j=2t-1$ yields
$
    \frac{\rho}{2}G_\nu
    \sum_{j=t}^{2t-1}
    e_{j+1}^{\frac{2}{1+\nu}}
    \leq
    \delta_t-\delta_{2t}
    \leq
    \delta_t.
$ 
This confirms the first inequality in \cref{eq:model-error-holder}. For the second inequality in
\cref{eq:model-error-holder}, define
\[
    C_t
    :=
    \max\left\{
        \delta_0e^{-t/2},
        \left(
            \frac{4(1+\nu)}
            {(1-\nu)\rho G_\nu t}
        \right)^{\frac{1+\nu}{1-\nu}}
    \right\}.
\]
By \cref{coro:convergence-d}, $\delta_t\leq C_t$. Moreover, we have
$
    \frac{2}{\rho G_\nu t} \leq
    \frac{4(1+\nu)}
    {(1-\nu)\rho G_\nu t}
    \leq
    C_t^{\frac{1-\nu}{1+\nu}}
$
and thus,
$
    \left(
        \frac{2\delta_t}{\rho G_\nu t}
    \right)^{\frac{1+\nu}{2}}
    \leq  \left(
        \frac{2C_t}{\rho G_\nu t}
    \right)^{\frac{1+\nu}{2}} 
    \leq 
    C_t.
$
Finally, substituting
$
    G_\nu
    =
    \left(
        \frac{L_\nu}{1+\nu}
    \right)^{-\frac{2}{1+\nu}}
$ 
gives the stated asymptotic rate.
\end{proof}

\Cref{lemma:error,prop:properties:PBM} show that \ProDes{} not only approximates the solution of \cref{eq:true-prox-sub} but also provides a computable certificate $e_j$ for the proximal subproblem accuracy. Note that \cref{eq:true-prox-sub} is strongly convex and can, in principle, be solved by standard (sub)gradient methods. However, these methods generally do not provide a comparable stopping criterion without additional information. 

The guarantee in \cref{lemma:error} applies to the last iterate when $\nu=1$, but to the best iterate over a dyadic window when $\nu\in[0,1)$. This is important because the best model error $e_j$ over the dyadic window achieves the same convergence rate as the proximal-value gap $\delta_j$. In contrast, \cref{eq:dynamic-error} only yields the following last-iterate bound:
$
    e_{j+1}
    \leq
    \left(
        \frac{2\delta_j}{\rho G_\nu}
    \right)^{\frac{1+\nu}{2}}
    =
    \bigO\left(
        j^{-\frac{(1+\nu)^2}{2(1-\nu)}}
    \right).
$ 
This is a slower rate; for example, when $\nu=0$, it becomes $\bigO(j^{-1/2})$, compared with  $\bigO(j^{-1})$ over the dyadic window.

\begin{remark} \label{remark:inner-loop-analysis}
Our analysis in this section shows the intrinsic behavior of the subroutine \ProDes{} (which corresponds to the null-step cycle of the classical PBM). In particular, we show the convergence of the subroutine \ProDes{} under the H{\"o}lder-smooth condition without any stopping criterion. In contrast, the analysis in \cite{diaz2023optimal} crucially relies on the test \cref{eq:PMB-test-intro} to establish the convergence of the null-step cycle. Our analysis is closely related to a recent work \cite{liang2026proximal} that also considers the intrinsic behavior of the bundle subroutine under the H{\"o}lder-smooth condition. However, the analysis of \cite{liang2026proximal} uses the cutting-plane model $f_{j}(\cdot) = \max_{ 1 \leq i \leq j} f(z_i) + \innerproduct{v_i}{\cdot - z_i}$ where $v_i \in \partial f(z_i)$. 
\end{remark}

\section{Revisiting classical proximal bundle methods} \label{sec:revisit-classical-PBM}

In \cref{sec:bundle-terations}, we have analyzed the convergence of \ProDes{} without specifying a stopping criterion $\mathcal{S}$. We now revisit the classical PBM in \Cref{alg:bundle} with the descent test \cref{eq:test-PBM}, and establish convergence guarantees for the composite problem \cref{eq:pb-main}. In particular, we interpret the classical PBM as the double-loop algorithm in \Cref{alg:bundle-double-loop}, with $\mathcal{S}$ given by
\cref{eq:test-PBM}, i.e.
\begin{align}
    \label{eq:PBM-classical}
    {
    y_{k+1} = \text{\ProDes{}}(y_k,\rho,\mathcal{S} = \cref{eq:test-PBM} ), \; k =0,1,\ldots.
    }
\end{align}

\subsection{Analysis outline} \label{subsection:PBM-outline}
We first explain why each call to \ProDes{} with the classical descent test \cref{eq:test-PBM} must terminate after finitely many inner iterations. For an outer iterate $y_k$, define the proximal gap
\begin{equation}
\label{eq:proximal-gap}
    \Delta_k
    :=
    F(y_k)-F_{1/\rho}(y_k),
\end{equation}
where $F_{1/\rho}$ denotes the Moreau envelope of the composite objective $F=f+h$; see \cref{eq:true-prox-sub}. This quantity measures the decrease achieved by the exact proximal update and satisfies $\Delta_k>0$ whenever $y_k\notin X^\star$.  
The following proposition shows that if the test \cref{eq:test-PBM} fails at the $j$-th iteration of \ProDes{}, then the model error $e_j$  must be larger than a fraction of $\Delta_{k}$. 

\begin{proposition}[Violation of the descent test]
\label{prop:violation-testing}
Consider \ProDes{} in \Cref{alg:Proxi-descent-subproblem} with center $y_k \notin X^\star$, and let $e_j$ be defined in \cref{eq:errors-j-th-iteration-e}. If the descent test \cref{eq:test-PBM} fails at the $j$-th iteration, then
\begin{equation}
\label{eq:violation-testing}
    e_j  >   (1-\beta) \left( F(y_k)-F_j(z_{j+1})
    \right)
    \geq  (1-\beta)\Delta_k.
\end{equation}
\end{proposition}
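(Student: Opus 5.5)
The argument has two parts: an algebraic rearrangement of the failed test for the first inequality, and a comparison of the model proximal value with the true proximal value for the second.

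\emph{First inequality.} The descent test \cref{eq:test-PBM} fails at iteration $j$ exactly when
\[
F(y_k)-F(z_{j+1}) < \beta\bigl(F(y_k)-F_j(z_{j+1})\bigr).
\]
We write $F(y_k)-F(z_{j+1}) = \bigl(F(y_k)-F_j(z_{j+1})\bigr) - \bigl(F(z_{j+1})-F_j(z_{j+1})\bigr)$. Since $h$ cancels in the last difference, it equals $f(z_{j+1})-f_j(z_{j+1}) = e_j$. Substituting and rearranging gives $e_j > (1-\beta)\bigl(F(y_k)-F_j(z_{j+1})\bigr)$.

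\emph{Second inequality.} Since $1-\beta>0$, it suffices to show $F(y_k)-F_j(z_{j+1}) \ge \Delta_k$. We use the optimality of $z_{j+1}$ in \cref{eq:PBM-trial-point} together with the minorant property $F_j\le F$:
\[
F_j(z_{j+1}) \le F_j(z_{j+1}) + \tfrac{\rho}{2}\|z_{j+1}-y_k\|^2 = m_j \le F_{1/\rho}(y_k).
\]
The final step is the observation $\delta_j\ge 0$ from \cref{prop:properties:PBM}. Hence
\[
F(y_k)-F_j(z_{j+1}) \ge F(y_k)-F_{1/\rho}(y_k) = \Delta_k.
\]

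The only point needing care is this second inequality: one must drop the nonnegative quadratic term and invoke $m_j\le F_{1/\rho}(y_k)$, which holds because $f_j\le f$. The hypothesis $y_k\notin X^\star$ is not used in the inequalities themselves. It only guarantees $\Delta_k>0$, so that the resulting lower bound on $e_j$ is meaningful. Nothing else is delicate.
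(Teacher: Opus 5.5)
Your proof is correct and follows the paper's argument essentially verbatim. You rearrange the failed descent test using $e_j = F(z_{j+1}) - F_j(z_{j+1})$, since $h$ cancels, and then use optimality of $z_{j+1}$ together with $F_j \le F$ to get $F_j(z_{j+1}) + \frac{\rho}{2}\|z_{j+1}-y_k\|^2 \le F_{1/\rho}(y_k)$ and drop the quadratic term; your remark that $y_k \notin X^\star$ only serves to make $\Delta_k>0$ is also accurate.
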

\begin{proof}
If \cref{eq:test-PBM} fails, then we must have
\[
   F(z_{j+1}) > F(y_k) - \beta\left(
        F(y_k)-F_j(z_{j+1})
    \right).
\]
By definition $F=f+h$ and $F_j=f_j+h$, the model error satisfies  
$
    e_j    =    f(z_{j+1})-f_j(z_{j+1})
    =
    F(z_{j+1})-F_j(z_{j+1}).
$ 
Therefore,
$
    e_j    >    (1-\beta)
    \left(
        F(y_k)-F_j(z_{j+1})
    \right).
$ 

Since $F_j\leq F$ and $z_{j+1}$ minimizes
the approximated proximal subproblem \cref{eq:PBM-trial-point}, we have 
\[
    F_j(z_{j+1})
    +
    \frac{\rho}{2}\|z_{j+1}-y_k\|^2
    \leq
    F_{1/\rho}(y_k).
\]
This implies that 
$
    F(y_k)-F_j(z_{j+1})
    \geq
    \Delta_k
    +
    \frac{\rho}{2}\|z_{j+1}-y_k\|^2
    \geq
    \Delta_k,
$ 
which completes the proof.
\end{proof}

Since the model error $e_j$ converges to zero by \cref{lemma:error} and $\Delta_k>0$ whenever $y_k\notin X^\star$, \cref{prop:violation-testing} implies that the descent test \cref{eq:test-PBM} must be satisfied within finitely many iterations. Moreover, for fixed $\rho>0$ and
$\beta\in(0,1)$, combining \cref{prop:violation-testing} with \cref{lemma:error} guarantees that the inner loop stops in at most
$
    \mathcal{O}\Big(
        \Delta_k^{-\frac{1-\nu}{1+\nu}}
    \Big)
$ iterations for $\nu\in[0,1)$. We will further show that the smooth case $\nu=1$ admits a uniformly bounded inner-loop complexity.

The overall complexity follows by combining two additional facts. First, we have
$
    \Delta_k
    \geq
    \Omega(\epsilon^2)
$
whenever $F(y_k)-F^\star\geq\epsilon$, for sufficiently small $\epsilon>0$ (see \cref{lemma:proximal-gap}). Second, we need at most $\mathcal{O}(\epsilon^{-1})$ outer iterations to obtain $F(y_k)-F^\star\leq\epsilon$ (see \cref{proposition:outer-loop-classical}). 
Consequently, with any fixed proximal parameter $\rho > 0$, the classical PBM requires at most
\begin{equation}
\label{eq:overall-complexity-PBM}
    \underbrace{
        \mathcal{O}\left(\epsilon^{-1}\right)
    }_{\text{Outer complexity}}
    \cdot
    \underbrace{
        \mathcal{O}\left(
            \epsilon^{-2\frac{1-\nu}{1+\nu}}
        \right)
    }_{\text{Inner complexity}}
    =
    \mathcal{O}\left(
        \epsilon^{-\frac{3-\nu}{1+\nu}}
    \right)
\end{equation}
total iterations to find an iterate satisfying $F(y_k)-F^\star\leq\epsilon$. This rate recovers the guarantees in \cite{diaz2023optimal} for $\nu=0$ and $\nu=1$, while
additionally covering the intermediate H{\"o}lder-smooth regime $\nu\in(0,1)$. This rate holds for a fixed $\rho$, and it can be further improved using an $\epsilon$-dependent $\rho$. 

The following subsection establishes these guarantees formally and makes their dependence on the problem and algorithm parameters explicit.

\subsection{Complexity of the classical PBM for H{\"o}lder-smooth functions}
\label{subsec:complexity-classical-PBM}

We first establish that the classical PBM in \Cref{alg:bundle-double-loop} takes at most $\mathcal{O}(\epsilon^{-1})$ outer iterations to find a point satisfying $F(y_k) - F^\star \leq \epsilon$ when $\rho$ is fixed. For this, we recall a useful fact regarding the cost value drop at each descent step, as also used in \cite[Lemma 5.1]{diaz2023optimal}. 

\begin{lemma}
    \label{lemma:outer-loop-PBM-classical}
    Consider the classical PBM in \Cref{alg:bundle-double-loop}. For each outer-loop update, i.e., when the descent test holds, we have   
    \begin{align*}
        F(y_{k+1}) \leq F(y_k) - \beta \Delta_k, \; \forall k \geq 0,
    \end{align*}
    where $\Delta_k$ is the true proximal gap defined in \cref{eq:proximal-gap}. 
\end{lemma}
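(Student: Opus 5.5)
The plan is a short chain of inequalities that starts from the descent test \cref{eq:test-PBM} at the inner iteration where \ProDes{} terminates. Fix an outer index $k$ and suppose \ProDes{} returns $y_{k+1}=z_{j+1}$ at inner iteration $j$. Then the test gives
\[
    F(y_k)-F(y_{k+1}) \;\geq\; \beta\bigl(F(y_k)-F_j(z_{j+1})\bigr).
\]
So it suffices to lower bound the predicted decrease $F(y_k)-F_j(z_{j+1})$ by $\Delta_k$.

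For this I would reuse the argument already given in the proof of \cref{prop:violation-testing}. Since $z_{j+1}$ minimizes the model proximal subproblem \cref{eq:PBM-trial-point}, we have $m_j=F_j(z_{j+1})+\frac{\rho}{2}\|z_{j+1}-y_k\|^2$. Since $F_j\le F$ (global minorant in \cref{assump:bm-single}), we also have $m_j\le F_{1/\rho}(y_k)$. Combining the two gives
\[
    F_j(z_{j+1}) \;\le\; F_{1/\rho}(y_k)-\frac{\rho}{2}\|z_{j+1}-y_k\|^2 \;\le\; F_{1/\rho}(y_k).
\]
Hence $F(y_k)-F_j(z_{j+1})\ge F(y_k)-F_{1/\rho}(y_k)=\Delta_k$, which is nonnegative. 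Multiplying by $\beta>0$ and inserting this into the test yields $F(y_{k+1})\le F(y_k)-\beta\Delta_k$.

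There is no real obstacle here. The only point that needs care is that the descent test actually holds at termination, so that $y_{k+1}$ is well defined. This is exactly the situation in which the lemma is stated: the lemma is conditioned on an outer update taking place, and finite termination when $y_k\notin X^\star$ follows from \cref{prop:violation-testing} together with \cref{lemma:error}. If $y_k\in X^\star$, then $\Delta_k=0$ and the claim reduces to monotonicity, which again follows from the test, since its right-hand side is nonnegative.
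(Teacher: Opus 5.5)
Your proposal is correct and follows essentially the same argument as the paper. You invoke the descent test at the accepted candidate $z_{j+1}$, then use $F_j \le F$ together with the optimality of $z_{j+1}$ in the model subproblem to get $F(y_k)-F_j(z_{j+1}) \ge \Delta_k$; your side remarks on termination are not needed, since the lemma is stated conditionally on the test holding.
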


\begin{proof}
   At the accepted inner iterate $y_{k+1}=z_{j+1}$, the descent test gives
   $$
   \begin{aligned}
    F(y_k)-F(y_{k+1}) \geq \beta\bigl(F(y_k)-F_j(z_{j+1})\bigr).
   \end{aligned}
   $$
Moreover, since $F_j\leq F$ and $z_{j+1}$ minimizes the model proximal subproblem,
we have $
    F_j(z_{j+1})
    +\frac{\rho}{2}\|z_{j+1}-y_k\|^2
    \leq
    F_{1/\rho}(y_k).
$ 
Thus, we know  $
    F(y_k)-F_j(z_{j+1})
    \geq
    F(y_k)-F_{1/\rho}(y_k)
    =
    \Delta_k.
$ 
Substituting this fact into the descent test above leads to the desired inequality  
$
    F(y_{k+1})\leq F(y_k)-\beta\Delta_k.
$ 
\end{proof}

This confirms that each descent step achieves a cost value drop of at least a $\beta$ portion of the proximal gap $\Delta_k$. As expected, if $\beta = 1$, this reduces to the cost improvement of the true PPM. With this cost value drop at each descent step, we have the following outer-loop complexity.  

\begin{proposition}[Outer-loop complexity of the classical PBM]
\label{proposition:outer-loop-classical}
Consider the convex optimization problem \cref{eq:pb-main} and let
$\{y_k\}$ be generated by the classical PBM in \Cref{alg:bundle-double-loop}. Suppose there is a constant $D >0$ such that $D \geq \sup \{\Dist(x,X^\star) \mid F(x) \leq F(y_0)\}$. 
Then, for any $\epsilon>0$, the method generates an iterate
$y_k$ satisfying
$
F(y_k)-F^\star\le\epsilon
$
within at most
\begin{align}
\label{eq:outer}
    T_{\mathrm{outer}}
:=
\left \lceil \frac{2\rho D^2}{\beta\epsilon} \right \rceil
+
\left \lceil \frac{\log_+\!\left((F(y_0)-F^\star)/(\rho D^2)\right)}
{\log\!\left(1/(1-\beta/2)\right)}\right \rceil
\end{align}
outer iterations, where $\log_+(\cdot) = \max \{0, \log(\cdot)\}$. 
\end{proposition}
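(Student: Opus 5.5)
Together with \cref{lemma:outer-loop-PBM-classical}, the argument needs only a lower bound on the proximal gap $\Delta_k$ in terms of the optimality gap $r_k := F(y_k)-F^\star$. We will then split the run into a linear phase and a sublinear phase. Fix $k$ and let $x^\star$ be the projection of $y_k$ onto $X^\star$. Since $F(y_k)\le F(y_0)$ (the outer iterates are monotone by \cref{lemma:outer-loop-PBM-classical}), we have $\|y_k-x^\star\|\le D$. Test the Moreau envelope with the segment points $x_\lambda = (1-\lambda)y_k+\lambda x^\star$, $\lambda\in[0,1]$. Convexity of $F$ gives
\[
F_{1/\rho}(y_k)\le F(x_\lambda)+\tfrac{\rho}{2}\lambda^2 D^2\le F(y_k)-\lambda r_k+\tfrac{\rho}{2}\lambda^2D^2,
\]
so $\Delta_k\ge \max_{\lambda\in[0,1]}\{\lambda r_k-\tfrac{\rho}{2}\lambda^2 D^2\}$.

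Optimizing over $\lambda$ gives two regimes:
\begin{itemize}
\item if $r_k\ge \rho D^2$, take $\lambda=1$ to get $\Delta_k\ge r_k-\tfrac{\rho}{2}D^2\ge r_k/2$;
\item if $r_k<\rho D^2$, take $\lambda=r_k/(\rho D^2)$ to get $\Delta_k\ge r_k^2/(2\rho D^2)$.
\end{itemize}
Plugging into \cref{lemma:outer-loop-PBM-classical}, in the first regime $r_{k+1}\le(1-\beta/2)r_k$. Hence after at most $\lceil \log_+(r_0/(\rho D^2))/\log(1/(1-\beta/2))\rceil$ outer iterations we reach $r_k<\rho D^2$. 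This is the second term of \cref{eq:outer}; if $r_0\le\rho D^2$ that phase is empty, which is why $\log_+$ appears. The first regime cannot recur afterwards, because $r_k$ is nonincreasing.

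In the second regime we have $r_{k+1}\le r_k-\frac{\beta}{2\rho D^2}r_k^2$. Dividing by $r_kr_{k+1}$ and using $r_{k+1}\le r_k$ gives
\[
\frac{1}{r_{k+1}}\ge\frac{1}{r_k}+\frac{\beta}{2\rho D^2}.
\]
This holds as long as $r_{k+1}>0$; otherwise we are already done. After $m$ such steps, $1/r\ge m\beta/(2\rho D^2)$, so $r\le\epsilon$ once $m\ge 2\rho D^2/(\beta\epsilon)$. That gives the first term of \cref{eq:outer}, and adding the two phase counts yields $T_{\mathrm{outer}}$.

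The main obstacle is bookkeeping rather than any single hard step. First, we need each outer iteration to be well defined, i.e.\ each \ProDes{} call terminates. This follows from \cref{prop:violation-testing} and \cref{lemma:error} whenever $y_k\notin X^\star$; if $y_k\in X^\star$ the target already holds. Second, the level-set bound $D$ must apply to every $y_k$, which follows from monotonicity. Third, the regime split must be monotone so that the two phase counts add; this too follows from $r_k$ being nonincreasing.
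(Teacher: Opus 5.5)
Your proposal is correct and follows essentially the same route as the paper's proof in \cref{appendix:outer-loop-PBM}: the same lower bound $\Delta_k\ge\min\{r_k/2,\,r_k^2/(2\rho D^2)\}$ (you rederive \cref{lemma:proximal-gap} inline via the segment to $x^\star$, which is its standard proof), then a linear phase with contraction $1-\beta/2$ followed by a sublinear phase obtained by telescoping $1/r_k$. One cosmetic fix: assign the boundary case $r_k=\rho D^2$ to the second regime, as the paper does with its strict split $\Phi_k>\rho D^2$; with your non-strict split, the claim that $r_k<\rho D^2$ is reached within $\lceil\log_+(r_0/(\rho D^2))/\log(1/(1-\beta/2))\rceil$ steps can fail by one when the ratio is an integer, whereas $\lambda=r_k/(\rho D^2)=1$ is still admissible at the boundary, so nothing else in your argument changes.
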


This outer-loop guarantee requires only convexity and does not rely on any smoothness assumption. Note that in \cref{eq:outer}, choosing $\rho = \Theta (\epsilon)$ implies a complexity of $\mathcal{O}(\log(1/\epsilon))$. The proof of \cref{proposition:outer-loop-classical} is standard; see, e.g., \cite{diaz2023optimal,ruszczynski2011nonlinear}. 
 For self-completeness, we provide a proof in \cref{appendix:outer-loop-PBM}.

\begin{remark}
   Note that \cref{proposition:outer-loop-classical} requires a uniform constant $D \geq \sup \{\Dist(x,X^\star) \mid F(x) \leq F(y_0)\}$. If we use $\{y_k^\rho\}_{k\geq 0}$ to denote the sequence of descent steps with the parameter $\rho > 0$, then we can use $D_{\rho} = \sup_{k\geq 0} \Dist(y_k^\rho,X^\star)$ to replace $D$ in \cref{eq:outer}. In this case, however, we can not simply pick $\rho = \Theta (\epsilon)$ and claim that the complexity \cref{eq:outer} improves to $\bigO (\log(1/\epsilon))$, since $D_{\rho}$ depends on $\rho$ and thus $\epsilon$. The assumption $D \geq \sup \{\Dist(x,X^\star) \mid F(x) \leq F(y_0)\}$ can be satisfied for any function that has bounded sublevel sets. It can also be guaranteed for any function with H\"older growth.  
\end{remark}

We next bound the number of inner iterations required to satisfy the classical descent test by combining \cref{lemma:error} with the violation condition in \cref{prop:violation-testing}.

\begin{proposition}[Inner-loop complexity of the classical PBM]
\label{proposition:inner-loop-classical}
Consider \ProDes{} in \Cref{alg:Proxi-descent-subproblem} with center $y_k\notin X^\star$ and stopping criterion \cref{eq:test-PBM}. Suppose $f$ satisfies the H{\"o}lder smoothness \cref{eq:weakly-smooth-intro} with $L_\nu>0$. Let
$ \Delta_k $ be the proximal gap \cref{eq:proximal-gap} and $\delta_0$ be the initial value gap \cref{eq:errors-j-th-iteration-d}. 
\begin{itemize}
\setlength{\itemsep}{0pt}
    \item If $\nu=1$, the descent test \cref{eq:test-PBM} is satisfied within at most  $T_{\mathrm{inner},k}$ inner iterations with 
    \begin{equation}
    \label{eq:inner-loop-classical-smooth}
       T_{\mathrm{inner},k}
        = 2+\left\lceil
            \frac{
                \log_+\!\left(
                    \frac{L_1\delta_0}
                    {\rho(1-\beta)\Delta_k}
                \right)
            }{
                \log\!\left(
                    1+{\rho}/{L_1}
                \right)
            }
        \right\rceil. 
    \end{equation}

    \item If $\nu\in[0,1)$, define
    \begin{equation}
    \label{eq:inner-loop-classical-window}
        s_k := \left\lceil
            \max\left\{
                1,\,
                2\log_+\!\left(
                    \frac{\delta_0}
                    {(1-\beta)\Delta_k}
                \right),\,
                \frac{
                    4(1+\nu)
                }{
                    (1-\nu)\rho G_\nu
                    \bigl((1-\beta)\Delta_k\bigr)^{
                        \frac{1-\nu}{1+\nu}
                    }
                }
            \right\}
        \right\rceil.
    \end{equation}
    Then the descent test \cref{eq:test-PBM} is satisfied within at most $2s_k+1$
    inner iterations. In particular,
    \begin{equation}
    \label{eq:inner-loop-classical-holder}
        T_{\mathrm{inner},k}
        =
        \mathcal{O}\left(
            1
            +
            \log_+\!\left(
                \frac{\delta_0}
                {(1-\beta)\Delta_k}
            \right)
            +
            \frac{
                L_\nu^{\frac{2}{1+\nu}}
            }{
                \rho
                (1-\beta)^{\frac{1-\nu}{1+\nu}}
                \Delta_k^{\frac{1-\nu}{1+\nu}}
            }
        \right).
    \end{equation}
\end{itemize}
\end{proposition}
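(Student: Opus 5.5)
The plan is a contradiction argument that combines \cref{prop:violation-testing} with the decay of the model error in \cref{lemma:error}. Suppose the descent test \cref{eq:test-PBM} fails at every inner iteration $j=0,1,\ldots,J$. Then \cref{prop:violation-testing} gives $e_j>(1-\beta)\Delta_k>0$ for all these $j$, since $y_k\notin X^\star$. In particular $\delta_0>0$ must hold: if $\delta_0=0$, then \cref{coro:convergence-d} gives $\delta_j=0$ for all $j$, and the error bounds in \cref{lemma:error} force $e_{j}=0$ for $j\ge1$, so the test is met by $j=1$. The proof then reduces to showing that the upper bounds on $e_j$ from \cref{lemma:error} drop below $(1-\beta)\Delta_k$ at the claimed index.

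\emph{Smooth case $\nu=1$.} \Cref{eq:model-error-smooth} gives $e_{j+1}\le \frac{L_1}{\rho}(1+\rho/L_1)^{-j}\delta_0$. I choose $j^\star$ as the ceiling term in \cref{eq:inner-loop-classical-smooth}, which guarantees $(1+\rho/L_1)^{-j^\star}\frac{L_1\delta_0}{\rho}\le (1-\beta)\Delta_k$. Hence $e_{j^\star+1}\le(1-\beta)\Delta_k$, which contradicts failure at index $j^\star+1$. So the test holds at some index $\le j^\star+1$. Since indices start at $0$, this is at most $j^\star+2$ inner iterations, matching the additive constant $2$. The case where the $\log_+$ is zero is handled the same way.

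\emph{Case $\nu\in[0,1)$.} I apply \cref{eq:model-error-holder} with $t=s_k$, which yields
$\min_{s_k+1\le j\le 2s_k}e_j\le \max\{\delta_0e^{-s_k/2},((4(1+\nu))/((1-\nu)\rho G_\nu s_k))^{\frac{1+\nu}{1-\nu}}\}$.
Each term in the definition \cref{eq:inner-loop-classical-window} of $s_k$ is chosen to control one piece of this bound:
\begin{itemize}
\item $s_k\ge 2\log_+(\delta_0/((1-\beta)\Delta_k))$ gives $\delta_0e^{-s_k/2}\le(1-\beta)\Delta_k$;
\item the third term gives $\frac{4(1+\nu)}{(1-\nu)\rho G_\nu s_k}\le ((1-\beta)\Delta_k)^{\frac{1-\nu}{1+\nu}}$, and raising both sides to the power $\frac{1+\nu}{1-\nu}$ bounds the second piece by $(1-\beta)\Delta_k$;
\item $s_k\ge1$ makes $t=s_k$ admissible in \cref{lemma:error}.
\end{itemize}
So some $j\le 2s_k$ has $e_j\le(1-\beta)\Delta_k$, contradicting \cref{prop:violation-testing}. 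The test therefore holds at an index $\le 2s_k$, i.e., within $2s_k+1$ inner iterations.

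For the $\mathcal O$ form \cref{eq:inner-loop-classical-holder}, I bound the ceiling of the max by $1$ plus the sum of the three terms. I then substitute $G_\nu^{-1}=C_\nu^{\frac{2}{1+\nu}}=(L_\nu/(1+\nu))^{\frac{2}{1+\nu}}$, absorbing the $\nu$-dependent constant $\frac{4(1+\nu)}{1-\nu}$ and $(1+\nu)^{-\frac{2}{1+\nu}}$ into the $\mathcal O$.

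The main obstacle is careful index bookkeeping: aligning the shift $e_{j+1}$ in \cref{eq:model-error-smooth}, the dyadic window $[t+1,2t]$, and the convention for counting inner iterations from $j=0$. The degenerate cases, $\delta_0=0$ or a $\log_+$ equal to zero, must also be checked to stay within the stated counts. The analytical content is already contained in \cref{lemma:error} and \cref{prop:violation-testing}.
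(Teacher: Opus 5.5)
Your proposal is correct and follows essentially the same route as the paper. It combines the violation bound $e_j>(1-\beta)\Delta_k$ from \cref{prop:violation-testing} with the model-error rates of \cref{lemma:error}: the last-iterate bound for $\nu=1$, and the dyadic-window bound with $t=s_k$ for $\nu\in[0,1)$, using the same index bookkeeping. Your contradiction framing and explicit handling of the degenerate case $\delta_0=0$ are only cosmetic refinements of the paper's direct argument.
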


\begin{proof}
By \cref{prop:violation-testing}, failure of the descent test at the
$j$th inner iteration implies
$
    e_j  >  (1-\beta)\Delta_k.
$
Consequently, the descent test must be satisfied whenever
$
    e_j
    \leq
    (1-\beta)\Delta_k.
$ 

Suppose first that $\nu=1$. By \cref{eq:model-error-smooth} in  \cref{lemma:error},
we have $
    e_{j+1}
    \leq
    \frac{L_1}{\rho}
    \left(1+\frac{\rho}{L_1}\right)^{-j}
    \delta_0.
$ 
Solving $e_{j+1}\leq(1-\beta)\Delta_k$ for $j$ gives the bound \cref{eq:inner-loop-classical-smooth} (noting that we start $j = 0$ for the inner loop).

Now suppose $\nu\in[0,1)$. By \cref{lemma:error}, for every
$s\geq 1$, we have 
\[
    \min_{s+1\leq j\leq 2s}e_j
    \leq
    \max\left\{
        \delta_0e^{-s/2},
        \left(
            \frac{
                4(1+\nu)
            }{
                (1-\nu)\rho G_\nu s
            }
        \right)^{\frac{1+\nu}{1-\nu}}
    \right\}.
\]
The choice of $s_k$ guarantees
\begin{align*}
    \max\left\{
        \delta_0e^{-s_k/2},
        \left(
            \frac{
                4(1+\nu)
            }{
                (1-\nu)\rho G_\nu s_k
            }
        \right)^{\frac{1+\nu}{1-\nu}}
    \right\} \leq  (1-\beta)\Delta_k.
\end{align*}
Thus, the descent test must be satisfied within $2s_k+1$ inner iterations. 
Finally, using
$
    G_\nu^{-1}
    =
    \left(
        \frac{L_\nu}{1+\nu}
    \right)^{\frac{2}{1+\nu}}
$ 
in \cref{eq:inner-loop-classical-window} gives
\cref{eq:inner-loop-classical-holder}.
\end{proof}

The complexity \cref{eq:inner-loop-classical-holder} depends on the H{\"o}lder
constant and proximal parameter through
$
    {\rho^{-1}}{L_\nu^{\frac{2}{1+\nu}}}.
$ 
Also, both \cref{eq:inner-loop-classical-holder,eq:inner-loop-classical-smooth} depend on the initial value gap $\delta_0$, which may vary with different $y_k$. The following lemma bounds $\delta_0$ in terms of the proximal gap $\Delta_k$. The proof is provided in \Cref{appendix:initial-proximal-gap}.

\begin{lemma}\label{lemma:initial-proximal-gap}
Consider \ProDes{} in \Cref{alg:Proxi-descent-subproblem} with center $y_k\notin X^\star$. Suppose that $f$ satisfies \cref{eq:weakly-smooth-intro} with $\nu\in[0,1]$ and $L_\nu>0$. Let $\Delta_k$ and $\delta_0$ be defined in \cref{eq:proximal-gap} and \cref{eq:errors-j-th-iteration-d}, respectively.
Then
\begin{equation}
\label{eq:initial-gap-unified}
    \delta_0
    \leq
    \frac{1-\nu}{2(1+\nu)}\Delta_k
    +
    \frac{3+\nu}{2(1+\nu)}
    \frac{L_\nu^{\frac{2}{1+\nu}}}{\rho}
    \Delta_k^{\frac{2\nu}{1+\nu}}. 
\end{equation}
\end{lemma}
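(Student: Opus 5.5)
The plan is to bound $\delta_0$ by comparing the initial model subproblem, which contains only the linearization at the center, with the true proximal subproblem. Write $\bar x=y_k$ and $p:=\prox_{F/\rho}(\bar x)$. Since $z_0=\bar x$, the new-subgradient cut in \cref{assump:bm-single} gives $f_0(x)\ge \ell(x):=f(\bar x)+\innerproduct{g_0}{x-\bar x}$. Hence
\[
m_0\ \ge\ \min_x\Big\{\ell(x)+h(x)+\tfrac{\rho}{2}\|x-\bar x\|^2\Big\}.
\]
By the upper bound \cref{eq:weakly-smooth-conseq}, $\ell(x)\ge f(x)-C_\nu\|x-\bar x\|^{1+\nu}$ for every $x$. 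Therefore
\[
m_0\ \ge\ \min_x\Big\{F(x)+\tfrac{\rho}{2}\|x-\bar x\|^2-C_\nu\|x-\bar x\|^{1+\nu}\Big\}.
\]

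Next I use $\rho$-strong convexity of the true proximal objective: $F(x)+\frac{\rho}{2}\|x-\bar x\|^2\ge F_{1/\rho}(\bar x)+\frac{\rho}{2}\|x-p\|^2$. This yields
\[
\delta_0=F_{1/\rho}(\bar x)-m_0\ \le\ \sup_x\Big\{C_\nu\|x-\bar x\|^{1+\nu}-\tfrac{\rho}{2}\|x-p\|^2\Big\}.
\]
Set $d:=\|p-\bar x\|$ and $t:=\|x-p\|$. Then $\|x-\bar x\|\le t+d$, so the supremum is at most $\sup_{t\ge0}\{C_\nu(t+d)^{1+\nu}-\frac{\rho}{2}t^2\}$. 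To express $d$ through $\Delta_k$, I use $\rho(\bar x-p)\in\partial F(p)$. This gives $F(\bar x)-F(p)\ge\rho d^2$, hence $\Delta_k=F(\bar x)-F(p)-\frac{\rho}{2}d^2\ge\frac{\rho}{2}d^2$, that is, $d\le\sqrt{2\Delta_k/\rho}$.

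The remaining step is the one-dimensional estimate, and it is where the precise constants are decided. I would split $(t+d)^{1+\nu}$ using concavity of $s\mapsto s^{\nu}$:
\[
(t+d)^{1+\nu}\le (t+d)\,(t^\nu+d^\nu)= t^{1+\nu}+t\,d^{\nu}+t^{\nu}d+d^{1+\nu}.
\]
I would then absorb each $t$-dependent term into a share of $\frac{\rho}{2}t^2$ by Young's inequality with exponents $\frac{2}{1+\nu},\frac{2}{1-\nu}$ (or $2,2$ for the cross terms). This produces terms of the forms $C_\nu^{2/(1-\nu)}\rho^{-(1+\nu)/(1-\nu)}$, $C_\nu^2 d^{2\nu}/\rho$ and $C_\nu d^{1+\nu}$. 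After substituting $d^2\le 2\Delta_k/\rho$, the quantity $C_\nu^{2/(1+\nu)}\Delta_k^{2\nu/(1+\nu)}/\rho$ appears naturally, as in \cref{eq:initial-gap-unified}. The pure $t$-term is troublesome because it gives a $\Delta_k$-free constant.

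To avoid that constant, I would apply Young's inequality to the whole expression $C_\nu(t+d)^{1+\nu}$ instead. Writing $(t+d)^{1+\nu}=\big((t+d)^2\big)^{(1+\nu)/2}$ and using $ab\le\frac{1+\nu}{2}a^{2/(1+\nu)}+\frac{1-\nu}{2}b^{2/(1-\nu)}$ with a free scaling $\lambda>0$ gives
\[
C_\nu(t+d)^{1+\nu}\le\tfrac{1+\nu}{2}\lambda(t+d)^2+\tfrac{1-\nu}{2}\lambda^{-\frac{1+\nu}{1-\nu}}C_\nu^{\frac{2}{1-\nu}}.
\]
Choose $\lambda$ so that, after expanding $(t+d)^2\le(1+\eta)t^2+(1+\eta^{-1})d^2$, the $t^2$ coefficient is at most $\frac{\rho}{2}$. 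Then choose $\eta$ in terms of $d$ and $\Delta_k$ so that the leftover $d^2$ term gives $\frac{1-\nu}{2(1+\nu)}\Delta_k$ and the constant term becomes $\propto L_\nu^{2/(1+\nu)}\Delta_k^{2\nu/(1+\nu)}/\rho$. The main obstacle is tuning $\lambda$ and $\eta$ jointly, possibly as functions of $\Delta_k$, to reproduce exactly the coefficients $\frac{1-\nu}{2(1+\nu)}$ and $\frac{3+\nu}{2(1+\nu)}$. I would check the endpoints first: at $\nu=1$ the bound should be $\delta_0\le \frac{L_1}{\rho}\Delta_k$, and at $\nu=0$ it should be $\frac12\Delta_k+\frac{3}{2}L_0^2/\rho$. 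Then I would interpolate the choice $\lambda\propto (C_\nu/d^{1-\nu})$-type scaling between these endpoints.
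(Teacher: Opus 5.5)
\textbf{There is a genuine gap}, and it comes before the tuning step you identify as the main obstacle. Your first reductions are valid but too lossy.

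Write $S(d):=\sup_{t\ge 0}\{C_\nu(t+d)^{1+\nu}-\frac{\rho}{2}t^2\}$. Taking $x$ on the ray from $\bar x$ through $p$ shows that your intermediate bound $\sup_x\{C_\nu\|x-\bar x\|^{1+\nu}-\frac{\rho}{2}\|x-p\|^2\}$ equals $S(d)$ exactly. Moreover, $S$ is nondecreasing in $d$. Hence, for $\nu\in[0,1)$,
\[
S(d)\;\ge\;S(0)=\frac{1-\nu}{2(1+\nu)}\,L_\nu^{\frac{2}{1-\nu}}\rho^{-\frac{1+\nu}{1-\nu}}>0
\]
regardless of $\Delta_k$. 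For $\nu\in(0,1)$, however, the claimed right-hand side tends to $0$ as $\Delta_k\downarrow 0$, which happens as $y_k$ approaches $X^\star$. So your chain cannot conclude once $y_k$ is close to the solution set.

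For $\nu=1$ it is worse. $S(d)=+\infty$ when $L_1\ge\rho$, and even for $L_1<\rho$ the chain yields only $\delta_0\le\frac{L_1}{\rho-L_1}\Delta_k$.

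Your Young step confirms that this is intrinsic, not a matter of choosing $\lambda,\eta$. The requirement $\frac{1+\nu}{2}\lambda(1+\eta)\le\frac{\rho}{2}$ forces $\lambda\le\rho/(1+\nu)$. Then the constant $\frac{1-\nu}{2}\lambda^{-\frac{1+\nu}{1-\nu}}C_\nu^{\frac{2}{1-\nu}}$ is at least $S(0)$. The scaling $\lambda\propto C_\nu d^{\nu-1}$ you suggest violates that constraint once $d$ is small, and then the supremum over $t$ is infinite.

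Your chain does work at $\nu=0$. There $S(d)=L_0 d+\frac{L_0^2}{2\rho}$, and $L_0\sqrt{2\Delta_k/\rho}\le\frac12\Delta_k+\frac{L_0^2}{\rho}$ gives exactly $\frac12\Delta_k+\frac32\frac{L_0^2}{\rho}$.

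\textbf{The missing idea} is to argue in the other direction. Instead of lower-bounding $m_0$ by a worst case over all $x$, upper-bound $F_{1/\rho}(y_k)$ at a single test point $y_k+t(z_1-y_k)$, $t\in[0,1]$, on the segment toward the model minimizer.

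First, weighted AM--GM with $\lambda_k:=L_\nu^{\frac{2}{1+\nu}}\Delta_k^{-\frac{1-\nu}{1+\nu}}$ and $\eta_\nu:=\frac{1-\nu}{2(1+\nu)}$ turns the H\"older bound into a $\Delta_k$-dependent quadratic majorant:
\[
f(y_k+u)\le f(y_k)+\langle g_0,u\rangle+\tfrac{\lambda_k}{2}\|u\|^2+\eta_\nu\Delta_k .
\]
Set $d:=z_1-y_k$. The cut $f(y_k)+\langle g_0,d\rangle\le f_0(z_1)$ and convexity of $h$ then give
\[
F(y_k+td)\le(1-t)F(y_k)+tF_0(z_1)+\tfrac{\lambda_k t^2}{2}\|d\|^2+\eta_\nu\Delta_k .
\]
Insert this into $F_{1/\rho}(y_k)\le F(y_k+td)+\frac{\rho t^2}{2}\|d\|^2$, and use the identity $F_0(z_1)+\frac{\rho}{2}\|d\|^2=m_0=F(y_k)-\Delta_k-\delta_0$. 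Choosing $t=\rho/(\lambda_k+\rho)$ cancels the $\|d\|^2$ terms and leaves $(1+\eta_\nu)\Delta_k\ge\frac{\rho}{\lambda_k+\rho}(\Delta_k+\delta_0)$. This gives $\delta_0\le\eta_\nu\Delta_k+(1+\eta_\nu)\frac{\lambda_k}{\rho}\Delta_k$, which is the claim since $1+\eta_\nu=\frac{3+\nu}{2(1+\nu)}$.

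The damping $t<1$ is what lets the curvature $\lambda_k$ exceed $\rho$ by an arbitrary factor; for $\nu<1$, $\lambda_k$ blows up as $\Delta_k\downarrow0$. In your comparison against $\frac{\rho}{2}\|x-p\|^2$, the effective curvature must stay below $\rho$. That is exactly why a $\Delta_k$-free constant survives in your bound.
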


In particular, for the smooth case $\nu = 1$, \cref{eq:initial-gap-unified} reduces to
$
    \delta_0\leq (L_1/\rho)\Delta_k.
$
Consequently, \cref{eq:inner-loop-classical-smooth} implies that the number of inner iterations is uniformly bounded independently of the proximal center $y_k$. To relate the inner-loop complexity in \cref{proposition:inner-loop-classical} to the target accuracy, we next lower-bound the proximal gap $\Delta_k$ in terms of the suboptimality $\epsilon$.

\begin{lemma}[{\cite[Lemma 7.12]{ruszczynski2011nonlinear}}]
\label{lemma:proximal-gap}
Consider \cref{eq:pb-main}. Let $x^\star\in X^\star$ and
$y_k\in\dom F\setminus\{x^\star\}$. Then the proximal gap
defined in \cref{eq:proximal-gap} satisfies
\begin{equation*}
%\label{eq:proximal-gap-lower-bound}
    \Delta_k
    \geq
    \begin{cases}
        \displaystyle
        \frac{1}{2\rho}
        \left(
            \frac{
                F(y_k)-F^\star
            }{
                \|y_k-x^\star\|
            }
        \right)^2,
        &
        \text{if }
        F(y_k)-F^\star
        \leq
        \rho\|y_k-x^\star\|^2,
        \\[1.5ex]
        \displaystyle
        F(y_k)-F^\star
        -
        \frac{\rho}{2}\|y_k-x^\star\|^2,
        &
        \text{otherwise}.
    \end{cases}
\end{equation*}
\end{lemma}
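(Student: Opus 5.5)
The plan is to bound the proximal gap from below by testing the Moreau envelope at points on the segment between $y_k$ and $x^\star$. Since $F_{1/\rho}(y_k)$ is a minimum over all $x$, for any $t\in[0,1]$ we may plug in $x_t:=(1-t)y_k+t x^\star$. This gives
\[
F_{1/\rho}(y_k)\le F(x_t)+\frac{\rho}{2}t^2\|y_k-x^\star\|^2 .
\]
Convexity of $F=f+h$ (both convex, with $y_k,x^\star\in\dom F$, so $x_t\in\dom F$) yields $F(x_t)\le F(y_k)-t\,(F(y_k)-F^\star)$. Subtracting from $F(y_k)$, and writing $a:=F(y_k)-F^\star$ and $d:=\|y_k-x^\star\|>0$, we get
\[
\Delta_k\ \ge\ \max_{t\in[0,1]}\Big\{t a-\frac{\rho}{2}t^2 d^2\Big\}.
\]

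It then remains to maximize this one-dimensional concave quadratic over $[0,1]$. The unconstrained maximizer is $t^\star=a/(\rho d^2)$, and we split into two cases according to whether it lies in $[0,1]$.

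\emph{Case 1: $a\le \rho d^2$.} Then $t^\star\in[0,1]$, and the value at $t^\star$ is $a^2/(2\rho d^2)$, which is the first case of the lemma.

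\emph{Case 2: $a> \rho d^2$.} Then the quadratic is increasing on $[0,1]$, so we take $t=1$ and obtain $a-\frac{\rho}{2}d^2$, which is the second case.

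There is no real obstacle. The only points to check are that $x_t$ is feasible, which follows from convexity of $\dom F$, and that the chosen $t$ lies in $[0,1]$, which is exactly what the case split handles.
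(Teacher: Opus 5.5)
Your proof is correct and takes the same route the paper indicates. The paper gives no separate proof: it cites the textbook lemma and remarks that the bound follows from the definition of the Moreau envelope and convexity of $F$, which is exactly your argument of testing the envelope at $x_t=(1-t)y_k+tx^\star$ and maximizing $ta-\frac{\rho}{2}t^2d^2$ over $t\in[0,1]$ with the case split at $a=\rho d^2$.
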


This result follows directly from the definition of the Moreau envelope and convexity of $F$. To clarify its implication for the target accuracy, suppose that $\Dist(y_k,X^\star)\leq D$ and choose $x^\star$ as a projection of $y_k$ onto $X^\star$. Writing
$
    \Phi_k:=F(y_k)-F^\star,
$
\cref{lemma:proximal-gap} implies
\begin{equation}
\label{eq:proximal-gap-objective-gap}
    \Delta_k
    \geq
    \min\left\{
        \frac{\Phi_k^2}{2\rho D^2},
        \frac{\Phi_k}{2}
    \right\}.
\end{equation}
Consequently, whenever $\Phi_k\geq\epsilon$, we must have 
\begin{equation}
\label{eq:proximal-gap-objective-gap-2}
    \Delta_k
    \geq \min\left\{
        \frac{\epsilon^2}{2\rho D^2},
        \frac{\epsilon}{2}
    \right\} = 
    \frac{
        \epsilon^2
    }{
        2\max\{\rho D^2,\epsilon\}
    }.
\end{equation}
In particular, if $\epsilon\leq\rho D^2$, then $
    \Delta_k
    \geq
    {\epsilon^2}/({2\rho D^2}).
$

We are now ready to establish the overall complexity of the classical
PBM by combining the inner- and outer-loop estimates in
\Cref{proposition:outer-loop-classical,proposition:inner-loop-classical} with \cref{lemma:initial-proximal-gap}.

\begin{theorem}[Overall complexity of the classical PBM]
\label{theorem:complexity-classical-PBM}
Assume the setting of \Cref{proposition:outer-loop-classical}, and suppose that $f$
satisfies \cref{eq:weakly-smooth-intro} with $\nu\in[0,1]$ and $L_\nu>0$. For any $\epsilon\in(0,F(y_0)-F^\star),$ let $T_{\mathrm{total}}$ denote the total number of inner iterations required by the classical PBM in \Cref{alg:bundle-double-loop} to obtain an iterate satisfying
$
    F(y_k)-F^\star\leq\epsilon.
$
The following statements hold.

\begin{itemize}
    \item If $\nu=1$, then
    \begin{equation}
    \label{eq:total-complexity-classical-smooth}
    \begin{aligned}
        T_{\mathrm{total}}
        &\leq
        T_{\mathrm{outer}} \times
        \left[
            2+
            \left\lceil
                \frac{
                    \log_+\left(
                        \frac{L_1^2}{(1-\beta)\rho^2}
                    \right)
                }{
                    \log\left(
                        1+\frac{\rho}{L_1}
                    \right)
                }
            \right\rceil
        \right].
    \end{aligned}
    \end{equation}

    \item If $\nu\in[0,1)$, then
    \begin{equation}
    \label{eq:total-complexity-classical-holder}
    \begin{aligned}
        T_{\mathrm{total}}
        &\leq
        T_{\mathrm{outer}} \times 
        \Bigg[
            3 + 4\log\left( \frac{1}{1-\beta} \right)
            +\frac{8}{1-\nu}\frac{L_\nu^{\frac{2}{1+\nu}}
            }{\rho(1-\beta)^{\frac{1-\nu}{1+\nu}}}
            \left(
                \frac{
                    2\max\{\rho D^2,\epsilon\}
                }{
                    \epsilon^2
                }
            \right)^{\frac{1-\nu}{1+\nu}}
        \Bigg].
    \end{aligned}
    \end{equation}
\end{itemize}
Here $T_{\mathrm{outer}}$ is defined in \cref{eq:outer}.
\end{theorem}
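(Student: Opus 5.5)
The plan is to bound the total count as (number of outer iterations before reaching accuracy $\epsilon$) times (a uniform bound on the inner-loop length for any center $y_k$ with $F(y_k)-F^\star>\epsilon$). By \cref{proposition:outer-loop-classical}, at most $T_{\mathrm{outer}}$ outer iterations are needed. Each such center satisfies $\Phi_k>\epsilon$ and, since $F(y_k)\le F(y_0)$ by \cref{lemma:outer-loop-PBM-classical}, also $\Dist(y_k,X^\star)\le D$. Hence \cref{eq:proximal-gap-objective-gap-2} gives $\Delta_k\ge \epsilon^2/(2\max\{\rho D^2,\epsilon\})$. It therefore suffices to show that the inner bound from \cref{proposition:inner-loop-classical} is at most the bracketed quantity, uniformly in $k$. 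This uses \cref{lemma:initial-proximal-gap} to eliminate $\delta_0$ and the lower bound on $\Delta_k$ for the remaining term.

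\emph{Smooth case $\nu=1$.} \cref{lemma:initial-proximal-gap} gives $\delta_0\le (L_1/\rho)\Delta_k$. Hence
\[
\frac{L_1\delta_0}{\rho(1-\beta)\Delta_k}\le \frac{L_1^2}{(1-\beta)\rho^2}.
\]
Substituting this into \cref{eq:inner-loop-classical-smooth}, and using that $\log_+$ is monotone, yields the bracket in \cref{eq:total-complexity-classical-smooth}. This bound is independent of $\Delta_k$, so no lower bound on $\Delta_k$ is needed here.

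\emph{Case $\nu\in[0,1)$.} The inner loop ends within $2s_k+1$ iterations, and $s_k\le 1+\max\{1,A_1,A_2\}\le 2+A_1+A_2$, where $A_1=2\log_+\!\big(\delta_0/((1-\beta)\Delta_k)\big)$ and $A_2$ is the third term of \cref{eq:inner-loop-classical-window}. Thus $2s_k+1\le 5+2A_1+2A_2$. To match the stated constant $3$, I will be more careful: when the maximum is attained by $1$, $s_k=1$; otherwise $\lceil\max\rceil\le 1+A_1+A_2$. This gives $2s_k+1\le 3+2A_1+2A_2$.

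\emph{Bounding $A_2$.} Since $G_\nu^{-1}=(L_\nu/(1+\nu))^{2/(1+\nu)}\le L_\nu^{2/(1+\nu)}$ and $1+\nu\le 2$, we get
\[
2A_2\le \frac{8}{1-\nu}\,\frac{L_\nu^{2/(1+\nu)}}{\rho(1-\beta)^{\frac{1-\nu}{1+\nu}}}\,\Delta_k^{-\frac{1-\nu}{1+\nu}}.
\]
Inserting the lower bound on $\Delta_k$ then gives the last term of \cref{eq:total-complexity-classical-holder}.

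\emph{Bounding $A_1$ (the main obstacle).} We have $2A_1=4\log_+\!\big(\delta_0/((1-\beta)\Delta_k)\big)$, and the target is $4\log\frac{1}{1-\beta}$. This would follow from $\delta_0\le\Delta_k$, but \cref{lemma:initial-proximal-gap} only gives
\[
\delta_0/\Delta_k\le \tfrac{1-\nu}{2(1+\nu)}+\tfrac{3+\nu}{2(1+\nu)}\,\frac{L_\nu^{2/(1+\nu)}}{\rho}\,\Delta_k^{-\frac{1-\nu}{1+\nu}},
\]
which can be large. The approach is to split the log, using $\log_+(ab)\le\log_+ a+\log b$ for $b\ge1$, and to bound $\log(\tfrac12+x)\le x$-type contributions. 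These get absorbed into the $A_2$ term, which carries the same factor $L_\nu^{2/(1+\nu)}\rho^{-1}\Delta_k^{-(1-\nu)/(1+\nu)}$. Concretely, I would use $\log(1+x)\le x$ together with $\frac{3+\nu}{2(1+\nu)}\le\frac32$, and check that $4\cdot\frac32\le \frac{8}{1-\nu}-2A_2$-slack. Here the slack comes from the loose $4(1+\nu)\le 8$ step: the true $2A_2$ has coefficient $\frac{8(1+\nu)}{1-\nu}\cdot\frac{1}{(1+\nu)^{2/(1+\nu)}}$, and I would need to verify that this plus $6$ fits under $\frac{8}{1-\nu}$ times the same factor. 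If it does not fit, I would instead use $(1-\beta)^{-(1-\nu)/(1+\nu)}\ge1$ to reshuffle. This constant bookkeeping is the step I expect to need the most care.

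Finally, multiply the uniform inner bound by $T_{\mathrm{outer}}$. Because of the strict inequality in the test, the last successful inner iteration is counted once.
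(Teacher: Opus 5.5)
Your outer structure, the observation that $F(y_k)\le F(y_0)$ gives $\Dist(y_k,X^\star)\le D$, and the smooth case $\nu=1$ all match the paper. The case $\nu\in[0,1)$, however, has a genuine gap, exactly where you expected trouble.

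By weakening $\lceil\max\{1,A_1,A_2\}\rceil$ to $1+A_1+A_2$, you must pay for the $\log_+(\delta_0/\Delta_k)$ part of $A_1$ \emph{in addition to} all of $A_2$. Write $r=\tfrac{1-\nu}{1+\nu}$ and $A_k:=L_\nu^{2/(1+\nu)}/(\rho\Delta_k^{r})$. Then \cref{lemma:initial-proximal-gap} leaves you with an extra $4\log(1+\tfrac32A_k)$, which you planned to linearize to $6A_k$. This is not an artifact of the lemma. Take $f=L|\cdot|$, $h\equiv0$, a linear initial model, and center $a\downarrow0$: then $\delta_0\to L^2/(2\rho)$ while $\Delta_k\to0$.

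There is also no slack in the last term to absorb this extra term. Computed exactly,
\[
2A_2=\frac{8}{1-\nu}\,(1+\nu)^{-r}\,\frac{A_k}{(1-\beta)^{r}},
\]
so the room below the target coefficient is $\frac{8}{1-\nu}\bigl(1-(1+\nu)^{-r}\bigr)(1-\beta)^{-r}A_k$. This vanishes at $\nu=0$ for every $\beta$. Reshuffling via $(1-\beta)^{-r}\ge1$ cannot help, since both sides carry that factor.

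Relatedly, your justification of the $2A_2$ bound is off. Using $G_\nu^{-1}\le L_\nu^{2/(1+\nu)}$ and $1+\nu\le2$ separately gives the constant $16/(1-\nu)$, not $8/(1-\nu)$. The correct reason is that the factor $1+\nu$ cancels against $(1+\nu)^{-2/(1+\nu)}$ inside $G_\nu^{-1}$, leaving $(1+\nu)^{-r}\le1$. There is no ``loose $4(1+\nu)\le 8$'' slack. Your route therefore proves only a strictly weaker bracket than \cref{eq:total-complexity-classical-holder}.

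The missing idea is to keep the maximum and hide the $A_k$-part of the logarithm inside it.
\Cref{lemma:initial-proximal-gap} gives $\delta_0/\Delta_k\le\tfrac r2+(1+\tfrac r2)A_k\le1+2A_k$. Then $\log(1+x)\le x$ yields $2\log_+\bigl(\delta_0/((1-\beta)\Delta_k)\bigr)\le2\log\frac{1}{1-\beta}+4A_k$.
The third entry $B_k:=A_2$ of \cref{eq:inner-loop-classical-window} satisfies $B_k\ge4A_k$, because $B_k/(4A_k)=\frac{(1+\nu)^{-r}}{(1-\nu)(1-\beta)^{r}}\ge\frac{1}{(1+\nu)(1-\nu)}\ge1$.
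Hence $\max\{1,A_1,B_k\}\le\max\{1,\,2\log\frac{1}{1-\beta}+B_k\}$. This gives $s_k\le1+2\log\frac{1}{1-\beta}+B_k$ and $T_{\mathrm{inner},k}\le 2s_k+1\le3+4\log\frac{1}{1-\beta}+2B_k$.
Bounding $2B_k$ as above and inserting $\Delta_k\ge\epsilon^2/(2\max\{\rho D^2,\epsilon\})$ then gives exactly the bracket in \cref{eq:total-complexity-classical-holder}.
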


\begin{proof}
By \Cref{proposition:outer-loop-classical}, the method requires at most $T_{\mathrm{outer}}$ outer iterations. It therefore suffices to bound the number of inner iterations at each proximal center $y_k$ satisfying $F(y_k)-F^\star>\epsilon$.

\textbf{Case 1:} $\nu=1$. 
\Cref{lemma:initial-proximal-gap} implies  
$
    \delta_0 \leq \frac{L_1}{\rho}\Delta_k.
$ 
Substituting this bound into
\cref{eq:inner-loop-classical-smooth} gives
\begin{equation} \label{eq:inner-complexity-per-iteration}
    T_{\mathrm{inner},k}
    \leq
    2+ \left\lceil
        \frac{
            \log_+\left(
                \frac{L_1^2}{(1-\beta)\rho^2}
            \right)
        }{
            \log\left(
                1+\frac{\rho}{L_1}
            \right)
        }
    \right\rceil.
\end{equation}
Multiplying by $T_{\mathrm{outer}}$ establishes
\cref{eq:total-complexity-classical-smooth}.

\textbf{Case 2:} $\nu\in[0,1)$. For notational simplicity, define 
\begin{equation} \label{eq:constant-for-proof}
r:=\frac{1-\nu}{1+\nu} \in (0,1], \quad
A_k := \frac{L_\nu^{\frac{2}{1+\nu}}}{\rho\Delta_k^{r}}, 
   \quad B_k:= \frac{ 4(1+\nu)}{(1-\nu)\rho G_\nu
        ((1-\beta)\Delta_k)^r}
\end{equation}
where $B_k$ denotes the last term in \cref{eq:inner-loop-classical-window}. Some elementary calculations confirm that $B_k\geq4A_k$. Meanwhile, \Cref{lemma:initial-proximal-gap} guarantees that 
\[
    \frac{\delta_0}{\Delta_k}
    \leq \frac{r}{2} + \left(1+\frac{r}{2}\right)A_k
    \leq 1+2A_k,
\]
where the last inequality uses $r\in(0,1]$. Consequently, we have 
\begin{equation}
\label{eq:initial-gap-log-bound}
    \log_+\left(
        \frac{\delta_0}{(1 - \beta)\Delta_k}
    \right)
    \leq
    \log\left(\frac{1}{1 - \beta}\right)
    +
    \log(1+2A_k)
    \leq
    \log\left(\frac{1}{1 - \beta}\right)
    +
    2A_k,
\end{equation}
where the last inequality comes from the fact that $t \mapsto \log(t)$ is concave for $t > 0$. By \cref{eq:inner-loop-classical-window} in \Cref{proposition:inner-loop-classical}, the descent test is satisfied
within at most $2s_k+1$ inner iterations, where
\[
    s_k
    :=
    \left\lceil
        \max\left\{
            1,\,
            2\log_+\left(
                \frac{\delta_0}{(1 - \beta)\Delta_k}
            \right),
            B_k
        \right\}
    \right\rceil.
\]
From \cref{eq:initial-gap-log-bound} and $B_k \geq 4A_k$, we have 
$
    s_k  \leq  1 +2\log\left(\frac{1}{1 - \beta}\right)+B_k.
$ 
Hence, substituting $B_k$ and the definition $G_\nu$ in \cref{eq:two-constants}, we get 
\begin{equation}
    \begin{aligned}
    \label{eq:T-inner}
    T_{\mathrm{inner},k}
    &\leq
    2s_k+1 
    \leq
    3 + 4\log\left(\frac{1}{1 - \beta}\right) + \frac{8}{1-\nu}
    \frac{
        L_\nu^{\frac{2}{1+\nu}}
    }{\rho(1 - \beta)^r\Delta_k^r}. 
\end{aligned}
\end{equation}
Finally, before reaching an
$\epsilon$-optimal solution, we have 
$
    \Delta_k^{-1}
    \leq  2\max\{\rho D^2,\epsilon\}  \epsilon^{-2}
$  by \cref{eq:proximal-gap-objective-gap-2}. 
Substituting this bound and recalling that $r=(1-\nu)/(1+\nu)$ proves \cref{eq:total-complexity-classical-holder}.
\end{proof}

\Cref{theorem:complexity-classical-PBM} applies to any fixed proximal parameter $\rho > 0$. To the best of our knowledge, it provides the first complexity guarantees of the classical PBM with the descent test \cref{eq:test-PBM} for the full class of H{\"o}lder-smooth functions. %Our results match the state-of-the-art analysis 
When $\rho > 0$ is fixed and  independent of the target accuracy $\epsilon$, the overall complexity \cref{eq:total-complexity-classical-smooth} is $\mathcal{O}(\epsilon^{-1})$ for the smooth case with $\nu = 1$, which matches the usual rate of gradient descent; the overall complexity \cref{eq:total-complexity-classical-holder} becomes $\mathcal{O}(\epsilon^{-3})$ for the nonsmooth case with $\nu = 0$, which is the same as the state-of-the-art analysis \cite{kiwiel2000efficiency,diaz2023optimal}. For the intermediate regime $\nu\in(0,1)$, the overall complexity is $\mathcal{O}\left(
        \epsilon^{-\frac{3-\nu}{1+\nu}}    \right)$  as highlighted in \cref{eq:overall-complexity-PBM}, which has not been established previously. Note that we can obtain a stronger guarantee by selecting the proximal parameter according to the target accuracy. Specifically, if $\rho=\Theta(\epsilon)$, then the inner complexity improves to
$
    \mathcal{O}\left(
        \rho^{-1}\epsilon^{-\frac{1-\nu}{1+\nu}}
    \right)
    =
    \mathcal{O}\left(
        \epsilon^{-\frac{2}{1+\nu}}
    \right),
    \, \nu\in[0,1),
$ 
while the outer complexity \cref{eq:outer} becomes $\mathcal{O}(\log(1/\epsilon))$. Consequently, the overall complexity is
$
    \widetilde{\mathcal{O}}\left(
        \epsilon^{-\frac{2}{1+\nu}}
    \right),
    \, \nu\in[0,1),
$ 
matching the rate of the universal primal gradient method \cite{nesterov2015universal} up to a logarithmic factor. 

A refined analysis of the inner-iteration counting removes this logarithmic factor.

\begin{theorem} 
\label{thm:classical-improved-explicit}
Assume the setting of \cref{theorem:complexity-classical-PBM} and let $\nu\in[0,1)$ and $ \epsilon\in(0,F(y_0)-F^\star).$ Pick  $\rho=c\epsilon/D^2$ for any fixed $ c>0$.
Then the classical PBM in \Cref{alg:bundle-double-loop} finds an iterate $y_k$ satisfying $ F(y_k)-F^\star\leq \epsilon $ 
within the following total iterations
$\mathcal O\!\left(
\frac{
    L_\nu^{\frac{2}{1+\nu}} D^2
}{
    \beta (1-\beta)^{\frac{1-\nu}{1+\nu}}
    \epsilon^{\frac{2}{1+\nu}}
}
\right)$.
\end{theorem}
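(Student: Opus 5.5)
The plan is to avoid multiplying the worst-case inner bound by $T_{\mathrm{outer}}$. Instead, I would sum the per-center inner bound \cref{eq:T-inner} over the descent steps, keeping track of how $\Delta_k$ varies along the trajectory. Write $r=\frac{1-\nu}{1+\nu}$, $\Phi_k=F(y_k)-F^\star$ and $K_\nu=\frac{8}{1-\nu}L_\nu^{\frac{2}{1+\nu}}(1-\beta)^{-r}$. By \cref{eq:T-inner}, every center with $\Phi_k>\epsilon$ costs at most
\[
3+4\log\bigl(\tfrac{1}{1-\beta}\bigr)+\frac{K_\nu}{\rho}\Delta_k^{-r}
\]
inner iterations. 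The constant part, multiplied by $T_{\mathrm{outer}}$, is handled directly. With $\rho=c\epsilon/D^2$, \cref{eq:outer} gives $T_{\mathrm{outer}}=\mathcal O\bigl(c/\beta+\log_+(\Phi_0/(c\epsilon))/\beta\bigr)$, so this contribution is only logarithmic in $1/\epsilon$ and is lower order. The remaining task is to bound $\frac{K_\nu}{\rho}\sum_k\Delta_k^{-r}$, where the sum runs over centers with $\Phi_k>\epsilon$.

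I would split these centers into two phases according to \cref{eq:proximal-gap-objective-gap}.

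\emph{Phase 1: $\Phi_k>\rho D^2=c\epsilon$.} Here $\Delta_k\ge \Phi_k/2$. Together with \cref{lemma:outer-loop-PBM-classical} this gives $\Phi_{k+1}\le(1-\beta/2)\Phi_k$. Let $K$ be the last phase-1 index and count backwards. Then $\Phi_{K-i}\ge(1-\beta/2)^{-i}\Phi_K\ge(1-\beta/2)^{-i}c\epsilon$. The sum $\sum\Delta_k^{-r}\le 2^r\sum_i(1-\beta/2)^{ir}(c\epsilon)^{-r}$ is therefore a geometric series dominated by its last term. It is bounded by $2^r(c\epsilon)^{-r}/\bigl(1-(1-\beta/2)^r\bigr)=\mathcal O\bigl((c\epsilon)^{-r}/(r\beta)\bigr)$.

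\emph{Phase 2: $\epsilon<\Phi_k\le c\epsilon$.} This phase is empty if $c\le 1$. Otherwise $\Delta_k\ge\Phi_k^2/(2\rho D^2)\ge \epsilon/(2c)$. Each descent step lowers $\Phi$ by at least $\beta\epsilon/(2c)$, so there are at most $2c^2/\beta$ such steps. Each contributes $\Delta_k^{-r}\le(2c/\epsilon)^r$.

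Combining the two phases, $\sum_k\Delta_k^{-r}=\mathcal O_{c,\nu}\bigl(\epsilon^{-r}/\beta\bigr)$. Multiplying by $K_\nu/\rho=K_\nu D^2/(c\epsilon)$ yields
\[
\mathcal O\!\left(\frac{L_\nu^{\frac{2}{1+\nu}}D^2}{\beta(1-\beta)^r\epsilon^{1+r}}\right),
\]
and $1+r=\frac{2}{1+\nu}$, which is the claimed bound. Adding the logarithmic constant-part contribution does not change the leading term.

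The main obstacle is the phase-1 sum. A naive count multiplies $\mathcal O(\log(1/\epsilon)/\beta)$ outer steps by the worst-case $\Delta_k^{-r}\sim\epsilon^{-r}$, which reintroduces the logarithm. The fix is the backward-indexed geometric lower bound on $\Phi_{K-i}$, which turns the sum into a convergent series controlled by its final term. The price is a factor $1/(1-(1-\beta/2)^r)\approx 2/(r\beta)$, a constant depending on $\nu$ and $\beta$ that is absorbed into the $\mathcal O$. I would also check the edge case where the first center already lies in phase 2, and confirm that the $\log(1/(1-\beta))$ terms are genuinely lower order.
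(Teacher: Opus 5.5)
Your proof is correct and follows the paper's argument: sum the per-center bound \cref{eq:T-inner}, absorb the constant part into the logarithmic $T_{\mathrm{outer}}$, and control $\sum_k\Delta_k^{-r}$ with a backward-indexed geometric series anchored at the last center before accuracy $\epsilon$ is reached. The only difference is that the paper does not split into two phases. It notes that, with $\rho=c\epsilon/D^2$ and $\Phi_k>\epsilon$, \cref{eq:proximal-gap-objective-gap} already gives the single linear bound $\Delta_k\ge\Phi_k/(2\max\{1,c\})$, which yields one uniform contraction factor $1-\beta/(2\max\{1,c\})$ and one geometric sum, with slightly better constants in $c$ than your phase-2 step count.
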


\begin{proof}
Define
$
    \Phi_k:=F(y_k)-F^\star,
     r:=\frac{1-\nu}{1+\nu}\in(0,1], a := \max\{ 1,c\} $, and $
    \gamma:=1-\frac{\beta}{2a}\in(0,1).
$
From \cref{eq:T-inner}, the number of inner iterations at the
$k$th proximal center satisfies
\[
    T_{\mathrm{inner},k}
    \leq
    C_\beta
    +
    \frac{8}{1-\nu}
    \frac{L_\nu^{\frac{2}{1+\nu}}}
    {\rho(1-\beta)^r\Delta_k^r},
    \qquad
C_\beta:=3+4\log\left(\frac{1}{1-\beta}\right).
\]
To get the total number of iterations, we sum up all the inner steps, i.e., 
\begin{align}
\label{eq:total}
        T_{\mathrm{total}}
    =
    \sum_{k=0}^{N-1}T_{\mathrm{inner},k} \leq  C_\beta N +  \frac{8}{1-\nu}
    \frac{L_\nu^{\frac{2}{1+\nu}}}
    {\rho(1-\beta)^r} \sum_{k=0}^{N-1}\Delta_k^{-r}
\end{align}
where 
$
    N:=\min\{k\geq 0:\Phi_k\leq\epsilon\}.
$
It therefore suffices to bound
$\sum_{k=0}^{N-1}\Delta_k^{-r}$.
By \cref{eq:proximal-gap-objective-gap} and the choice of $\rho$, for all $k = 0, \ldots, N-1$, it holds that
\begin{equation*}
    \Delta_k
    \geq
    \min\left\{
        \frac{\Phi_k}{2},
        \frac{\Phi_k^2}{2\rho D^2}
    \right\} = \min\left\{
        \frac{\Phi_k}{2},
        \frac{\Phi_k^2}{2 c \epsilon}
    \right\} \geq   \frac{\Phi_k}{2a}.
\end{equation*}
Combining this inequality with \cref{lemma:outer-loop-PBM-classical} yields
$
    \Phi_{k+1}\leq\gamma\Phi_k.
$
This leads to
$
     \epsilon
    <
    \Phi_{N-1}
    \leq
    \gamma^{N-1-k}\Phi_k,  k=0,\ldots,N-1,
$ 
and consequently
$
    \Delta_k^{-r}
    \leq
    \left({2a}/{\epsilon}\right)^r
    \gamma^{r(N-1-k)}, k=0,\ldots,N-1.
$ 
Thus, it follows that
\begin{equation}
\label{eq:sum-large-gap-rho}
    \sum_{k=0}^{N-1}\Delta_k^{-r}
    \leq
    \frac{(2a/\epsilon)^r}{1-\gamma^r} \leq \left ( \frac{2a}{\epsilon}\right)^r  \frac{2a}{r\beta}
\end{equation}
where the last inequality follows from the inequality $r(1-t) \le 1 -t^r $ with $t = \gamma =  1- \frac{\beta}{2a}$, which comes 
from the fact that the function \(t  \mapsto t^r\) is concave on \((0,\infty)\).

Finally, \cref{eq:outer} gives
$
    N
    \leq
    \left\lceil
        {2 c }/{\beta}
    \right\rceil
    +
    \left\lceil
        \frac{
            \log_+\!\left(
                \frac{F(y_0)-F^\star}{ c \epsilon}
            \right)
        }{
            \log\!\left(1/(1-\beta/2)\right)
        }
    \right\rceil
$
and plugging \cref{eq:sum-large-gap-rho} in \cref{eq:total} gives 
\begin{align*}
        T_{\mathrm{total}}
     \leq & \left ( 3+4\log\left(\frac{1}{1-\beta}\right)\right) \left(\left\lceil
        \frac{2 c }{\beta}
    \right\rceil
    +
    \left\lceil
        \frac{
            \log_+\!\left(
                \frac{F(y_0)-F^\star}{ c \epsilon}
            \right)
        }{
            \log\!\left(1/(1-\beta/2)\right)
        }
    \right\rceil \right) +  \frac{8}{1-\nu}
    \frac{L_\nu^{\frac{2}{1+\nu}} D^2}
    { c \epsilon (1-\beta)^r} \left ( \frac{2a}{\epsilon}\right)^r  \frac{2a}{r\beta}.
\end{align*}
Thus, as $\epsilon \downarrow 0$, the complexity is simplified as 
$
     T_{\mathrm{total}} = \mathcal{O}\!\left(
\frac{
    L_\nu^{\frac{2}{1+\nu} }D^2
}{
    \beta (1-\beta)^{\frac{1-\nu}{1+\nu}}
    \epsilon^{\frac{2}{1+\nu}}
}
\right).
$
\end{proof}

\subsection{Comparison with the state-of-the-art rates}
\label{subsec:comparison-state-of-art}

\begin{table}[t]
\centering
\caption{Comparison of the total iteration complexity of the classical PBM for H{\"o}lder-smooth functions satisfying \cref{eq:weakly-smooth-intro}. The fixed-$\rho$ columns report the dominant terms as $\epsilon\downarrow 0$. Here, $D\geq \sup\{\Dist(x,X^\star)\mid F(x)\leq F(y_0)\}$, and $r_\nu:=(1-\nu)/(1+\nu)$. ``N/A'' indicates that the corresponding rate is unavailable.}
\label{tab:comparison-classical-PBM}
\renewcommand{\arraystretch}{2.1}
\resizebox{\textwidth}{!}{%
\begin{tabular}{@{}lcccc@{}}
\toprule
&
\multicolumn{2}{c}{This work}
&
\multicolumn{2}{c}{D\'iaz and Grimmer \cite{diaz2023optimal}}
\\
\cmidrule(lr){2-3}
\cmidrule(lr){4-5}
Function class
&
Fixed $\rho>0$
&
$\rho=\epsilon/D^2$
&
Fixed $\rho>0$
&
$\rho=\epsilon/D^2$
\\
\midrule

Smooth
$\left(\nu=1\right)$
&
$\displaystyle
\mathcal{O}\left(
    \frac{\rho D^2}{\beta\epsilon}
    \left[
        1+
        \frac{
            \log_+\!\left(
                \frac{L_1^2}{(1-\beta)\rho^2}
            \right)
        }{
            \log(1+\rho/L_1)
        }
    \right]
\right)$
&
$\displaystyle
\widetilde{\mathcal{O}}\left(
    \frac{
        L_1D^2
    }{
        \beta\epsilon
    }
    \left[
        1+
        \log_+\!\left(
            \frac{L_1^2D^4}
                 {(1-\beta)\epsilon^2}
        \right)
    \right]
\right)$
&
$\displaystyle
\mathcal{O}\left(
    \frac{
        (L_1+\rho)^3D^2
    }{
        \beta(1-\beta)^2\rho^2\epsilon
    }
\right)$
&
$\displaystyle
\tilde{\mathcal{O}}\left(
    \frac{
        L_1^3D^6
    }{
        \beta(1-\beta)^2\epsilon^3
    }
\right)$
\\

\midrule

H{\"o}lder smooth
$\left(0<\nu<1\right)$
&
$\displaystyle
\mathcal{O}\left(
    \frac{
        L_\nu^{\frac{2}{1+\nu}}
        \rho^{r_\nu}
        D^{\frac{4}{1+\nu}}
    }{
        \beta
        (1-\beta)^{r_\nu}
        \epsilon^{\frac{3-\nu}{1+\nu}}
    }
\right)$
&
$\displaystyle
\mathcal{O}\left(
    \frac{
        L_\nu^{\frac{2}{1+\nu}}D^2
    }{
        \beta
        (1-\beta)^{r_\nu}
        \epsilon^{\frac{2}{1+\nu}}
    }
\right)$
&
N/A
&
N/A
\\

\midrule

Nonsmooth
$\left(\nu=0\right)$
&
$\displaystyle
\mathcal{O}\left(
    \frac{
        \rho L_0^2D^4
    }{
        \beta(1-\beta)\epsilon^3
    }
\right)$
&
$\displaystyle
\mathcal{O}\left(
    \frac{
        L_0^2D^2
    }{
        \beta(1-\beta)\epsilon^2
    }
\right)$
&
$\displaystyle
\mathcal{O}\left(
    \frac{
        \rho L_0^2D^4
    }{
        \beta(1-\beta)^2\epsilon^3
    }
\right)$
&
$\displaystyle
\mathcal{O}\left(
    \frac{
        L_0^2D^2
    }{
        \beta(1-\beta)^2\epsilon^2
    }
\right)$
\\

\bottomrule
\end{tabular}%
}
\end{table}

Optimal rates for classical PBMs with the descent test \cref{eq:test-PBM} have been obtained in \cite{diaz2023optimal}. The analysis  \cite{diaz2023optimal} only considers unconstrained optimization, corresponding to $h\equiv 0$. They considered generalizing the analysis beyond the unconstrained setting as future work. 

In contrast, our results apply to the composite objective $F=f+h$, including the constrained setting when $h$ is an indicator function of a convex set. 
Beyond this broader setting, our analysis covers the full class of H{\"o}lder smooth~functions. 
Even in the standard smooth and nonsmooth cases (i.e., $\nu \in \{0,1\}$), \Cref{theorem:complexity-classical-PBM} improves the dependence on the algorithmic parameters ($L_\nu, \rho, \beta$) compared with \cite{diaz2023optimal}, as shown in \Cref{tab:comparison-classical-PBM}. These improvements come from our new analysis for the bundle iterations in \Cref{sec:bundle-terations}, particularly \cref{lemma:error}; also see \Cref{remark:inner-loop-analysis}.  

For the smooth case: $\nu=1$, for any $\rho>0$, \Cref{theorem:complexity-classical-PBM} bounds the number of inner iterations per proximal center by \cref{eq:inner-complexity-per-iteration}. 
In contrast, \cite[Theorem~2.2]{diaz2023optimal} bounds the number of null steps per descent step by $\frac{16(L_1+\rho)^3}{(1-\beta)^2\rho^3}$. This reflects the rates in the first row of \Cref{tab:comparison-classical-PBM}. Our rate is more robust to $\beta$ and the proximal parameter $\rho$. For example, if $\rho\ll L_1$ (corresponding to a large step size $1/\rho$), the leading overall complexities become
\[
    \mathcal{O}\left(
        \frac{L_1D^2}{\beta\epsilon}
        \log\left(
            \frac{L_1^2}{(1-\beta)\rho^2}
        \right)
    \right)
    \qquad\text{and}\qquad
    \mathcal{O}\left(
        \frac{L_1^3D^2}
        {\beta(1-\beta)^2\rho^2\epsilon}
    \right),
\]
for our result and that of \cite{diaz2023optimal}, respectively. The polynomial term $(L_1/\rho)^2$ is replaced by $ \log(\frac{L_1^2}{\rho^2})$, and the quadratic dependence on $(1-\beta)^{-1}$ is improved to a logarithmic dependence. 
For the nonsmooth case $\nu=0$, for fixed $\rho>0$, the leading overall complexity in \Cref{theorem:complexity-classical-PBM} becomes
$
    \mathcal{O}\left(
        \frac{\rho L_0^2D^4}
        {\beta(1-\beta)\epsilon^3}
    \right),
$
whereas \cite[Theorem~2.1]{diaz2023optimal} gives
$
    \mathcal{O}\left(
        \frac{\rho L_0^2D^4}
        {\beta(1-\beta)^2\epsilon^3}
    \right).
$ 
Thus, both analyses establish the classical $\mathcal{O}(\epsilon^{-3})$ rate, but our bound improves the dependence on $(1-\beta)^{-1}$ from quadratic to linear. With $\rho=\epsilon/D^2$, both approaches attain the optimal $\mathcal{O}(\epsilon^{-2})$ rate. 

\begin{remark}[Comparison with universal gradient methods]
When $\rho=\Theta( \frac{\epsilon}{D^2})$, our refined analysis shows that the classical PBM attains
$
    \mathcal{O}\Big({\epsilon^{-\frac{2}{1+\nu}}}
    \Big), \nu\in[0,1)$. This matches the rate of the universal primal gradient method \cite{nesterov2015universal} for each fixed $\nu\in[0,1)$. Both methods adapt to the unknown H{\"o}lder exponent $\nu$ and constant $L_\nu$; the universal primal gradient method uses a line-search procedure, whereas the classical PBM uses a proximal parameter that remains fixed throughout the iterations.
Note that the universal fast gradient method \cite{nesterov2015universal} achieves the accelerated rate
$
    \mathcal{O}\left(
        \epsilon^{-\frac{2}{1+3\nu}}
    \right).
$ 
A corresponding universally accelerated guarantee remains open for the classical PBM. Achieving such a rate likely requires an acceleration mechanism, such as extrapolation or an accelerated inexact proximal-point scheme, beyond the standard bundle update; see \cite{liao2025accelerated,fersztand2025acceleration} for recent discussions. \hfill $\square$
\end{remark}

\section{Proximal bundle methods with an absolute error test} \label{sec:PBM-variant}

The classical PBMs use the descent test \cref{eq:PMB-test-intro}, equivalently written as \cref{eq:test-PBM}, to determine when the current proximal center should be moved. This test ensures that every center update yields a drop in the objective value. Such monotone descent, however, is not the only criterion for terminating the inner bundle loop and updating the proximal center. In this section, we consider an absolute error test that instead directly controls the accuracy in solving the proximal subproblem \cref{eq:true-prox-sub}. 

\subsection{A PBM variant with an absolute error test}

As discussed in \Cref{sec:bundle-terations}, the inner bundle iterations of \ProDes{} asymptotically solve the true proximal subproblem \cref{eq:true-prox-sub}. At each iteration, \ProDes{} generates a computable model error $e_j$ that quantifies the approximation quality; see \Cref{prop:properties:PBM}. Moreover, this error converges to zero by \Cref{lemma:error}. This perspective plays a key role in our analysis of the classical PBM in \Cref{subsection:PBM-outline} and leads to the refined complexity guarantees in \Cref{theorem:complexity-classical-PBM}.

From \Cref{eq:true-proximal-suboptimality} in \Cref{prop:properties:PBM}, the model error $e_j$ upper bounds the suboptimality of solving \cref{eq:true-prox-sub}, which directly relates to the inexactness \cref{eq:inexact-func}. More importantly, the inexact proximal inclusion \cref{eq:z-inexact-update} ensures an implicit inexact subgradient update $z_{j+1}
        =        \bar x-\frac{1}{\rho}s_{j+1}$  where $s_{j+1}$ is an $e_j$-inexact subgradient of $F$ at the next point $z_{j+1}$. 
These properties motivate terminating the inner bundle loop once the model error $e_j$ is sufficiently small. In particular, given an outer proximal center $y_k$, we terminate  \ProDes{} in \Cref{alg:Proxi-descent-subproblem} when 
\begin{equation} \label{eq:stop-rule-new} 
    e_j \leq \epsilon_k,
\end{equation}
where $\{\epsilon_k\}_{k\geq 0}$ is a prescribed sequence of positive inexactness. Unlike the relative descent test \cref{eq:PMB-test-intro}, this stopping criterion \cref{eq:stop-rule-new}  is based on an absolute error $\epsilon_k$. The resulting PBM variant is   
\begin{align}
    \label{eq:PBM-new}
    {
    y_{k+1} = \text{\ProDes{}}(y_k,\rho,\mathcal{S} = \cref{eq:stop-rule-new} ), \; k =0,1,\ldots,
    }
\end{align}
where $\rho > 0$ is the proximal parameter. Under the stopping rule \cref{eq:stop-rule-new}, each outer update satisfies
\begin{align}
    \label{eq:pbm-ippm-constant}
    y_{k+1}=y_k-\frac1\rho v_k,
\qquad
v_k\in \partial_{\epsilon_k}F(y_{k+1}).
\end{align}

The update \cref{eq:pbm-ippm-constant} can be interpreted as an inexact proximal point iteration and reduces to the exact PPM \cref{eq:PPM-iterate} when $\epsilon_k=0$. This type of inexact PPM scheme has also been studied in \cite{salzo2012inexact,barre2020principled,cominetti1997coupling}, but they typically do not specify an inner procedure for computing the required approximate proximal updates. The bundle iterations of \ProDes{} provide a constructive realization of \cref{eq:pbm-ippm-constant}. 
We list this PBM variant as a double-loop scheme in \Cref{alg:bundle-double-loop-varaint}. As listed in \Cref{alg:bundle-double-loop-varaint,alg:bundle-double-loop}, the key difference compared to the classical PBM is the inner-loop termination criterion. 

\begin{algorithm}[t]
\caption{A PBM variant with an absolute error test}
\label{alg:bundle-double-loop-varaint}
\begin{algorithmic}[1]
\Require \(y_0\in\dom F\), \(\rho>0\), \(\{\epsilon_k\}_{k\geq0}\),
         \(k_{\max}\in\mathbb{N}\)
\State Let \(\mathcal{S}\) be the absolute error test
       \cref{eq:stop-rule-new};
\For{\(k=0,1,\ldots,k_{\max}-1\)}
    \State \(y_{k+1}=\text{\ProDes}(y_k,\rho,\mathcal{S})\);
\EndFor
\end{algorithmic}
\end{algorithm}

\subsection{Complexity of the PBM variant}
Based on the inner-loop analysis in \Cref{sec:bundle-terations}, it is relatively easy to derive the iteration complexity of the PBM variant in \Cref{alg:bundle-double-loop-varaint}. In particular, to achieve an $\epsilon$-suboptimality, it suffices to use $\epsilon_k \leq \epsilon/2$ in \cref{eq:stop-rule-new}, as highlighted in \cref{eq:new-test}. We have the following guarantee for the outer-loop complexity. The proof is a simple extension of the standard analysis of the exact PPM \cite[Theorem 7.13]{ruszczynski2011nonlinear}, and we provide it to highlight its elegance and for completeness. 

\begin{proposition}[Outer-loop complexity of \Cref{alg:bundle-double-loop-varaint}]\label{prop:iPPM-constant}
Consider the inexact PPM \cref{eq:pbm-ippm-constant}. Fix $\epsilon>0$, and suppose that $\epsilon_k\leq\epsilon/2$ for all $k\geq0$. Then, for any $x^\star\in X^\star$ and every $N\geq1$, we have 
\begin{equation}
\label{eq:outer-complexity-absolute}
    \min_{0\leq i\leq N-1}
    \left\{F(y_{i+1})-F^\star\right\}
    \leq
    \frac{\rho\|y_0-x^\star\|^2}{2N}
    +
    \frac{\epsilon}{2}.
\end{equation}
\end{proposition}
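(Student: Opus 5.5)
The plan is to mimic the classical three-point argument for the exact PPM, carrying the inexactness $\epsilon_k$ as an additive error. Fix $x^\star\in X^\star$. The starting point is the inexact inclusion \cref{eq:pbm-ippm-constant}, namely $v_k=\rho(y_k-y_{k+1})\in\partial_{\epsilon_k}F(y_{k+1})$. We evaluate the defining inequality of the $\epsilon_k$-subdifferential at $u=x^\star$ and get
\[
F^\star\geq F(y_{k+1})+\innerproduct{v_k}{x^\star-y_{k+1}}-\epsilon_k ,
\]
that is, $F(y_{k+1})-F^\star\leq \rho\innerproduct{y_k-y_{k+1}}{y_{k+1}-x^\star}+\epsilon_k$.

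Next we apply the three-point identity, in the same form already used in the proof of \cref{lemma:improvement}:
\[
2\innerproduct{y_k-y_{k+1}}{y_{k+1}-x^\star}=\|y_k-x^\star\|^2-\|y_{k+1}-x^\star\|^2-\|y_k-y_{k+1}\|^2 .
\]
We drop the nonpositive term $-\|y_k-y_{k+1}\|^2$. Then, using $\epsilon_k\leq\epsilon/2$, we obtain the per-step estimate
\[
F(y_{k+1})-F^\star\leq \frac{\rho}{2}\bigl(\|y_k-x^\star\|^2-\|y_{k+1}-x^\star\|^2\bigr)+\frac{\epsilon}{2}.
\]

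Finally, we sum this estimate over $k=0,\ldots,N-1$. The distance terms telescope to at most $\frac{\rho}{2}\|y_0-x^\star\|^2$, after discarding $-\frac{\rho}{2}\|y_N-x^\star\|^2\le 0$. The error terms add up to $N\epsilon/2$. Bounding the minimum by the average then gives \cref{eq:outer-complexity-absolute}.

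No step here is a real obstacle. The one point to check is that the inexact subgradient sits at the new point $y_{k+1}$ and not at $y_k$. That placement is exactly what makes the inner product come out as $\innerproduct{y_k-y_{k+1}}{y_{k+1}-x^\star}$, which is the sign needed for the three-point identity to produce a telescoping decrease. This is guaranteed by \cref{prop:properties:PBM}\eqref{properties:PBM-2} together with the stopping rule $e_j\le\epsilon_k$: monotonicity of $\partial_e F$ in $e$ lets us replace $e_j$ by $\epsilon_k$. We also need each call to \ProDes{} to terminate, so that $y_{k+1}$ is well defined. This holds because \cref{lemma:error} gives $e_j\to0$, at least along a subsequence.
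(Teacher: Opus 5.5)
Your proof is correct and follows the paper's argument essentially verbatim: you apply the $\epsilon_k$-subgradient inequality at $x^\star$, use the three-point identity, telescope, and bound the minimum by the average (the paper drops the nonpositive terms after telescoping rather than per step, which makes no difference). Your closing remark on termination of \ProDes{} is not needed, since the proposition takes the inexact-PPM sequence as given; it also relies on H\"older smoothness, which the proposition does not assume.
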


\begin{proof}
Since $v_k\in\partial_{\epsilon_k}F(y_{k+1})$, the definition of the
$\epsilon_k$-subdifferential gives
\[
    F(y_{k+1})-F^\star
    \leq
    \langle v_k,y_{k+1}-x^\star\rangle+\epsilon_k.
\]
Substituting $v_k=\rho(y_k-y_{k+1})$ from \cref{eq:pbm-ippm-constant} and applying the  identity $\|y_k-x^\star\|^2 = 
        \|y_{k+1}-x^\star\|^2
        + 2\langle y_k-y_{k+1},y_{k+1}-x^\star\rangle + \|y_k-y_{k+1}\|^2 $, we obtain
\begin{align*}
    F(y_{k+1})-F^\star
    &\leq
    \rho\langle y_k-y_{k+1},y_{k+1}-x^\star\rangle
    +
    \epsilon_k \\
    &=
    \frac{\rho}{2}
    \left(
        \|y_k-x^\star\|^2
        -
        \|y_{k+1}-x^\star\|^2
        -
        \|y_{k+1}-y_k\|^2
    \right)
    +
    \epsilon_k.
\end{align*}
Telescoping over $k=0,\ldots,N-1$ and discarding the nonpositive terms
yields
\begin{equation} \label{eq:average-inexatness}
    \sum_{k=0}^{N-1}
    \left(F(y_{k+1})-F^\star\right)
    \leq
    \frac{\rho}{2}\|y_0-x^\star\|^2
    +
    \sum_{k=0}^{N-1}\epsilon_k.
\end{equation}
The result follows by dividing both sides by $N$ and using
$\epsilon_k\leq\epsilon/2$.
\end{proof}

As shown in \Cref{eq:average-inexatness}, despite the inexactness $\epsilon_k$ at each iteration, the accumulated error is averaged across the outer iterations. Consequently, choosing $\epsilon_k=\epsilon/2$ ensures that, for any fixed proximal parameter $\rho>0$, the PBM variant in \Cref{alg:bundle-double-loop-varaint} finds an $\epsilon$-optimal solution within $\mathcal{O}(1/\epsilon)$ outer iterations. This matches the outer-loop complexity of the classical PBM in \Cref{proposition:outer-loop-classical}, and its proof appears simpler. It is worth highlighting two key differences. First, \cref{prop:iPPM-constant} requires only convexity of $F$ and does not rely on the bounded-level-set constant $D$ used in \Cref{proposition:outer-loop-classical}.   Second, it guarantees the performance of the best iterate, or a suitable average iterate, but does not ensure monotonicity of the objective values. In contrast, the classical PBM is a descent method, i.e., each outer iteration reduces the cost value, and \Cref{proposition:outer-loop-classical} guarantees the last-iterate~performance.

Comparing the stopping criterion \cref{eq:stop-rule-new} with the violation condition \cref{eq:violation-testing}, we see that the inner-loop complexity of \cref{eq:PBM-new} follows directly from \Cref{proposition:inner-loop-classical} by replacing $(1-\beta)\Delta_k$ with $\epsilon_k$. For completeness, we state the corresponding bounds below.

\begin{proposition}[Inner-loop complexity under \cref{eq:stop-rule-new}]
\label{proposition:inner-loop-new}
Consider \ProDes{} in \Cref{alg:Proxi-descent-subproblem} with center $y_k\notin X^\star$ and stopping criterion \cref{eq:stop-rule-new}. Suppose $f$ satisfies the H{\"o}lder smoothness \cref{eq:weakly-smooth-intro} with $L_\nu>0$. Let $\delta_{0}$ denote the initial proximal-value gap associated with the center $y_k$, defined in \cref{eq:errors-j-th-iteration-d}. 
\begin{itemize}
\setlength{\itemsep}{0pt}
    \item If $\nu=1$, the test \cref{eq:stop-rule-new} is satisfied within at most  $T_{\mathrm{inner},k}$ inner iterations with 
    \begin{equation}
    \label{eq:inner-loop-smooth-new}
       T_{\mathrm{inner},k}
        = 2+\left\lceil
            \frac{
                \log_+\!\left(
                    \frac{L_1\delta_0}
                    {\rho\epsilon_k}
                \right)
            }{
                \log\!\left(
                    1+{\rho}/{L_1}
                \right)
            }
        \right\rceil.
    \end{equation}

    \item If $\nu\in[0,1)$, define
    \begin{equation}
    \label{eq:inner-loop-window-new}
        s_k := \left\lceil
            \max\left\{
                1,\,
                2\log_+\!\left(
                    \frac{\delta_0}
                    {\epsilon_k}
                \right),\,
                \frac{
                    4(1+\nu)
                }{
                    (1-\nu)\rho G_\nu
                    \bigl(\epsilon_k\bigr)^{
                        \frac{1-\nu}{1+\nu}
                    }
                }
            \right\}
        \right\rceil.
    \end{equation}
    Then the test \cref{eq:stop-rule-new} is satisfied within at most $2s_k+1$
    inner iterations. 
\end{itemize}
\end{proposition}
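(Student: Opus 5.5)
The plan is to reproduce the proof of \Cref{proposition:inner-loop-classical} with the threshold $(1-\beta)\Delta_k$ replaced by $\epsilon_k$. The key point is that, unlike the descent test, the absolute error test \cref{eq:stop-rule-new} is stated directly in terms of the model error. So no counterpart of \cref{prop:violation-testing} is needed: the inner loop stops at the first index $j$ with $e_j\leq\epsilon_k$. It therefore suffices to exhibit an index $j$ within the claimed range at which $e_j\leq\epsilon_k$. The inner loop is \ProDes{}, whose iterates do not depend on the stopping rule until it fires. Hence the bounds of \cref{lemma:error}, derived without any stopping criterion, apply verbatim to the sequence $\{e_j\}$ up to termination.

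\textbf{Smooth case $\nu=1$.} Apply \cref{eq:model-error-smooth}:
\[
e_{j+1}\leq \tfrac{L_1}{\rho}\bigl(1+\tfrac{\rho}{L_1}\bigr)^{-j}\delta_0 .
\]
If $\delta_0=0$, then $e_1\leq 0\leq\epsilon_k$, so the test holds by the second inner iteration. Otherwise take
\[
j^\star=\left\lceil \log_+\!\bigl(L_1\delta_0/(\rho\epsilon_k)\bigr)\big/\log(1+\rho/L_1)\right\rceil .
\]
Then $(1+\rho/L_1)^{-j^\star}\leq \rho\epsilon_k/(L_1\delta_0)$ when the log is positive; when it is zero, $j^\star=0$ and $L_1\delta_0/\rho\leq\epsilon_k$ directly. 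In either case $e_{j^\star+1}\leq\epsilon_k$. Inner iterations are indexed from $j=0$, and the test at index $j^\star+1$ is checked during the $(j^\star+2)$-th iteration. This yields the count $2+j^\star$ in \cref{eq:inner-loop-smooth-new}.

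\textbf{Case $\nu\in[0,1)$.} Use the dyadic-window bound \cref{eq:model-error-holder} with $t=s_k\geq1$:
\[
\min_{s_k+1\leq j\leq 2s_k}e_j\leq\max\Bigl\{\delta_0e^{-s_k/2},\Bigl(\tfrac{4(1+\nu)}{(1-\nu)\rho G_\nu s_k}\Bigr)^{\frac{1+\nu}{1-\nu}}\Bigr\}.
\]
Each term of the maximum in \cref{eq:inner-loop-window-new} is designed to make one piece at most $\epsilon_k$.
\begin{itemize}
\item Since $s_k\geq 2\log_+(\delta_0/\epsilon_k)$, we get $\delta_0e^{-s_k/2}\leq\epsilon_k$. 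If $\delta_0\leq\epsilon_k$ this is trivial; if $\delta_0=0$, \cref{coro:convergence-d} gives $\delta_j\equiv 0$ and the bound is still valid.
\item Since $s_k\geq 4(1+\nu)/\bigl((1-\nu)\rho G_\nu\epsilon_k^{(1-\nu)/(1+\nu)}\bigr)$, raising to the power $\frac{1+\nu}{1-\nu}$ gives the second piece at most $\epsilon_k$.
\end{itemize}
Hence some $j\leq 2s_k$ has $e_j\leq\epsilon_k$. With zero-based indexing, the test fires within $2s_k+1$ inner iterations.

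The main care point is the off-by-one bookkeeping between the index of $e_j$ and the iteration count. The other is checking that \cref{lemma:error} applies up to termination, and in particular that the degenerate case $\delta_0=0$ is covered. Beyond these, the argument is a direct substitution of $\epsilon_k$ for $(1-\beta)\Delta_k$ in the proof of \Cref{proposition:inner-loop-classical}.
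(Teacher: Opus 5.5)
Your proposal is correct and follows the paper's own route. The paper obtains this proposition directly from the proof of \Cref{proposition:inner-loop-classical} by substituting $\epsilon_k$ for $(1-\beta)\Delta_k$ and applying \cref{lemma:error}: the last-iterate bound for $\nu=1$ and the dyadic window with $t=s_k$ for $\nu\in[0,1)$, exactly as you do, including your handling of $\delta_0=0$ and the zero-based iteration count.
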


Combining \Cref{prop:iPPM-constant,proposition:inner-loop-new} shows that, for any fixed $\rho>0$ and $\epsilon_k=\epsilon/2$, the PBM variant in \Cref{alg:bundle-double-loop-varaint} finds an iterate satisfying $F(y_k)-F^\star\leq\epsilon$ within
\[
    \widetilde{\mathcal{O}}\left(
        \epsilon^{-\frac{2}{1+\nu}}
    \right),
    \qquad
    \nu\in[0,1],
\]
total iterations. The logarithmic factor arises from bounding the inner-loop complexity separately at each proximal center. As shown below, a refined analysis using \Cref{lemma:initial-proximal-gap} and careful counting across outer iterations can remove this factor.

\begin{theorem}[Overall complexity of the PBM variant]
\label{theorem:complexity-absolute-PBM}
Consider the PBM variant in \Cref{alg:bundle-double-loop-varaint}, and suppose that $f$ satisfies \cref{eq:weakly-smooth-intro} with $\nu\in[0,1]$ and $L_\nu>0$. For any $\epsilon\in(0,F(y_0)-F^\star)$, choose
$ \epsilon_k={\epsilon}/{2}, \ k\geq0.$
Define
$
    d_0:=\Dist(y_0,X^\star),
    \,
    \Phi_0:=F(y_0)-F^\star,
    \,
    R_0:={\Phi_0}/(\rho d_0^2).
$ 
Then the method generates an iterate satisfying $F(y_k)-F^\star\leq\epsilon$ within at most $T_{\mathrm{total}}$ inner iterations. 
\begin{itemize}
\setlength{\itemsep}{0pt}
    \item If $\nu=1$, then
    \begin{equation}
    \label{eq:total-complexity-absolute-smooth}
        T_{\mathrm{total}}
        \leq
        \left\lceil
        \frac{\rho d_0^2}{\epsilon}
    \right\rceil
        \left[
            3+
            \frac{
                2\log(1+L_1/\rho)
                +
                \log(3+2R_0)
            }{
                \log(1+\rho/L_1)
            }
        \right].
    \end{equation}

    \item If $\nu\in[0,1)$, then
    \begin{equation}
    \label{eq:total-complexity-absolute-holder}
        T_{\mathrm{total}}
        \leq
        \left\lceil
        \frac{\rho d_0^2}{\epsilon}
    \right\rceil
        \left[
            3
            +
            4\log(3+2R_0)
            +
            \frac{8}{1-\nu}\left(\frac{2}{1+\nu} \right)^{\frac{1-\nu}{1+\nu}}
            \frac{
                L_\nu^{\frac{2}{1+\nu}}
            }{
                \rho
                \epsilon^{\frac{1-\nu}{1+\nu}}
            }
        \right].
    \end{equation}
\end{itemize}
\end{theorem}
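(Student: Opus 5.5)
The plan is to split the count into an outer bound, obtained from \Cref{prop:iPPM-constant}, and a sum of the per-center inner bounds of \Cref{proposition:inner-loop-new}. The key point is that the logarithmic terms are summed across outer iterations with Jensen's inequality, not bounded one center at a time.

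\textbf{Outer count.} Let $x^\star$ be the projection of $y_0$ onto $X^\star$, so $\|y_0-x^\star\|=d_0$. Set $N:=\lceil \rho d_0^2/\epsilon\rceil$. With $\epsilon_k=\epsilon/2$, \cref{eq:outer-complexity-absolute} gives $\min_{1\le i\le N}\Phi_i\le \epsilon/2+\epsilon/2=\epsilon$, where $\Phi_i:=F(y_i)-F^\star$. Hence at most $N$ calls of \ProDes{} are needed. We may stop counting at the first $\epsilon$-optimal center, so every center $y_k$ that we charge for has $y_k\notin X^\star$.

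\textbf{Per-center inner count.} Fix a charged center $y_k$ and write $\Delta_k\le \Phi_k$.
\begin{itemize}
\item For $\nu=1$: \Cref{lemma:initial-proximal-gap} gives $\delta_0\le (L_1/\rho)\Delta_k$. Then \cref{eq:inner-loop-smooth-new} gives
\[
T_{\mathrm{inner},k}\le 3+\frac{2\log(1+L_1/\rho)+\log_+(2\Phi_k/\epsilon)}{\log(1+\rho/L_1)}.
\]
\item For $\nu\in[0,1)$: \cref{eq:inner-loop-window-new} gives $T_{\mathrm{inner},k}\le 2s_k+1\le 3+4\log_+(\delta_0/\epsilon_k)+2B$. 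Here $B$ is the third term in $s_k$ evaluated at $\epsilon_k=\epsilon/2$. After substituting $G_\nu$, $2B$ is exactly the last term in \cref{eq:total-complexity-absolute-holder}, and it does not depend on $k$.
\item To handle $\log_+(\delta_0/\epsilon_k)$, use \cref{eq:initial-gap-unified} and split $\delta_0$ into its two pieces. The piece $\frac{1-\nu}{2(1+\nu)}\Delta_k$ is at most a constant times $\Phi_k$. The piece $\frac{L_\nu^{2/(1+\nu)}}{\rho}\Delta_k^{2\nu/(1+\nu)}$ should be handled by Young's inequality: split it into a multiple of $\Delta_k$ plus a multiple of $\bigl(L_\nu^{2/(1+\nu)}/\rho\bigr)^{(1+\nu)/(1-\nu)}$. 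The log of the second part is then absorbed by the polynomial term $B$, using $\log(1+x)\le x$.
\end{itemize}
What remains in both cases is a term of the form $\log(c+2\Phi_k/\epsilon)$.

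\textbf{Summing across centers.} By concavity of $\log$,
\[
\sum_{k=0}^{N-1}\log\!\Bigl(1+\tfrac{2\Phi_k}{\epsilon}\Bigr)\le N\log\!\Bigl(1+\tfrac{2}{N\epsilon}\sum_{k=0}^{N-1}\Phi_k\Bigr).
\]
Inequality \cref{eq:average-inexatness} gives $\sum_{k=1}^{N-1}\Phi_k\le \rho d_0^2/2+N\epsilon/2$, which is at most $N\epsilon$ by the choice of $N$. The $k=0$ term contributes $\Phi_0/(N\epsilon)\le \Phi_0/(\rho d_0^2)=R_0$. So the average inside the log is at most $1+2(1+R_0)=3+2R_0$, which produces the $\log(3+2R_0)$ factor. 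Multiplying the per-center constants by $N$ then yields \cref{eq:total-complexity-absolute-smooth} and \cref{eq:total-complexity-absolute-holder}.

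\textbf{Main obstacle.} The delicate step is the Hölder case, $\nu\in[0,1)$. There the cross term in $\delta_0$ must be split so that its logarithm is absorbed into the existing polynomial term $B$ without inflating the constants $3$, $4$, and $\frac{8}{1-\nu}\bigl(\frac{2}{1+\nu}\bigr)^{r}$. I would first try the simple bound $\log(a+b)\le\log(1+a)+\log(1+b)$, with $\log(1+b)\le b\lesssim B/4$, and adjust the constant bookkeeping if it does not close.
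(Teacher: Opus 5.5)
Your outer count, the $\nu=1$ case, and the Jensen step that produces $\log(3+2R_0)$ are correct and match the paper. The gap is in the H\"older case, at the step you flagged. It is a scaling failure, not a matter of constants.

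Write $r=\frac{1-\nu}{1+\nu}$, $K=L_\nu^{2/(1+\nu)}/\rho$, $t_k=2\Delta_k/\epsilon$, and $A=K/(\epsilon/2)^r$. Then $B=\frac{4}{(1-\nu)(1+\nu)^r}A$ is linear in $A$, i.e.\ of order $\epsilon^{-r}$. Young's inequality gives $K\Delta_k^{1-r}\le(1-r)\Delta_k+rK^{1/r}$. After dividing by $\epsilon_k=\epsilon/2$, the remainder is $rK^{1/r}/(\epsilon/2)=rA^{1/r}=2rK^{1/r}/\epsilon$. Applying $\log(1+x)\le x$ to this gives a per-center term of order $1/\epsilon$, not $\epsilon^{-r}$. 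Multiplied by $N\approx\rho d_0^2/\epsilon$, the total becomes $O(\epsilon^{-2})$ for every $\nu$, so the $\epsilon^{-2/(1+\nu)}$ rate is lost. Your claim $\log(1+b)\le b\lesssim B/4$ is therefore false for small $\epsilon$ whenever $\nu>0$.

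The paper does not use Young here. In \cref{eq:initial-gap-unified} it uses $t_k^{1-r}\le 1+t_k$ to get $\delta_0/\epsilon_k\le\frac r2t_k+(1+\frac r2)At_k^{1-r}\le(1+2A)(1+t_k)$. Hence $\log_+(\delta_0/\epsilon_k)\le\log(1+t_k)+\log(1+2A)\le\log(1+t_k)+2A$, so $A$ enters linearly and is absorbed by $4A\le B$. If you want to keep Young, you must pull the exponent out of the logarithm before linearizing: $\log(1+cA^{1/r})\le\frac1r\log(1+c^rA)\le\frac{c^r}{r}A$ with $c=r(1+\frac r2)$. One then checks $\frac{2c^r}{r}A\le B$, which reduces to $(2r+r^2)^r\le(1+r)^{1+r}$.

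A second, smaller issue concerns the constant. To get the coefficient $2B$, you cannot bound $\lceil\max\{1,X,B\}\rceil$ by $1+X+B$ as you did, because any absorbed extra term then pushes the coefficient of $B$ above $2$. The paper keeps the maximum. With $X=2\log_+(\delta_0/\epsilon_k)\le 2\log(1+t_k)+B$, one has $\max\{X,B\}\le 2\log(1+t_k)+B$. This gives $s_k\le 1+2\log(1+t_k)+B$ and $T_{\mathrm{inner},k}\le 3+4\log(1+t_k)+2B$, which sums to \cref{eq:total-complexity-absolute-holder}.
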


\begin{proof}
Let 
$
    N:=\lceil \frac{\rho d_0^2}{\epsilon}\rceil
$ be the outer iteration bound.  Choose $x^\star\in X^\star$ such that
$\|y_0-x^\star\|=d_0$. 
From \cref{eq:outer-complexity-absolute}, we know 
\[
    \min_{0\leq k\leq N-1}
    \bigl(F(y_{k+1})-F^\star\bigr)
    \leq
    \frac{\rho d_0^2}{2N}
    +
    \frac{\epsilon}{2}
    \leq
    \epsilon.
\]
Hence, the method requires at most $N$ outer iterations. The rest of the analysis is to carefully bound the number of inner-loop iterations using \Cref{proposition:inner-loop-new} and \Cref{lemma:initial-proximal-gap}. 

For each outer iteration $k$, recall the proximal gap 
$
    \Delta_k := F(y_k)-F_{1/\rho}(y_k),
$ and define the ratio $
    t_k
    :=
    {2\Delta_k}/{\epsilon}
$ for notational simplicity. 
Since $F_{1/\rho}(y_k)\geq F^\star$, we have
$
    \Delta_k
    \leq
    F(y_k)-F^\star.
$ 
Applying \cref{eq:average-inexatness} leads to
$
    \sum_{k=1}^{N-1}t_k \leq \frac{2}{\epsilon} \sum_{k=1}^{N-1}(F(y_k) - F^\star )
    \leq {\rho d_0^2}/{\epsilon} + N \leq 2N. 
$ 
By definition, we have 
$
    t_0     = 2{\Delta_0}/\epsilon  \leq
    {2\Phi_0}/{\epsilon}.
$ 
Consequently, we have 
$
    \sum_{k=0}^{N-1}t_k
    \leq
    {2\Phi_0}/{\epsilon}
    +2N. 
$ 
By concavity of the logarithm,
\begin{align}
\label{eq:absolute-proof-log-sum}
    \sum_{k=0}^{N-1}\log(1+t_k)
    \leq
    N\log\left(
        1+\frac{1}{N}\sum_{k=0}^{N-1}t_k
    \right)
    \leq
    N\log\left(
        3+\frac{2\Phi_0}{N\epsilon}
    \right)
    \leq
    N\log(3+2R_0),
\end{align}
where we used $N\epsilon\geq\rho d_0^2$.

\smallskip
\noindent
\textbf{Case 1: $\nu=1$.}
Consider the inner bound \cref{eq:inner-loop-smooth-new} in \Cref{proposition:inner-loop-new}, and we define
$
    H_k :=  \log_+\left(
        \frac{2L_1\delta_{0}}{\rho\epsilon}
    \right).
$ The estimate \cref{eq:initial-gap-unified} in \Cref{lemma:initial-proximal-gap} gives
$ \delta_{0} \leq \frac{L_1}{\rho}\Delta_k.$ 
Therefore, we have 
\[
    \frac{2L_1\delta_{0}}{\rho\epsilon}
    \leq
    ({L_1}/{\rho})^2t_k
    \leq
    (1+{L_1}/{\rho})^2(1+t_k),
\]
which implies
$
    H_k
    \leq
    2\log(1+{L_1}/{\rho})+\log(1+t_k).
$ 
By \Cref{proposition:inner-loop-new}, we have 
\[
    T_{\mathrm{inner},k}
    \leq
    2+
    \left\lceil
        \frac{H_k}{\log(1+\rho/L_1)}
    \right\rceil
    \leq
    3+
    \frac{H_k}{\log(1+\rho/L_1)}.
\]
Summing over $k=0,\ldots,N-1$ and applying
\cref{eq:absolute-proof-log-sum} yields
\[
    T_{\mathrm{total}} = \sum_{k=0}^{N-1} T_{\mathrm{inner},k}
    \leq
    N
    \left[
        3+
        \frac{
            2\log(1+L_1/\rho)
            +
            \log(3+2R_0)
        }{
            \log(1+\rho/L_1)
        }
    \right],
\]
which proves
\cref{eq:total-complexity-absolute-smooth}.

\smallskip
\noindent
\smallskip
\noindent
\textbf{Case 2: $\nu\in[0,1)$.}
Similar to \cref{eq:constant-for-proof}, define
\[
    r:=\frac{1-\nu}{1+\nu}\in(0,1],
    \qquad
    A:=
    \frac{
        L_\nu^{\frac{2}{1+\nu}}
    }{
        \rho({\epsilon}/{2})^r
    },
    \qquad
    B:=
    \frac{
        4(1+\nu)
    }{
        (1-\nu)\rho G_\nu({\epsilon}/{2})^r
    }.
\]
Here, $B$ denotes the last term in the inner-loop bound of
\Cref{proposition:inner-loop-new}. Similar to \cref{eq:constant-for-proof}, we have 
$B  \geq 4A.$  By \Cref{proposition:inner-loop-new}, the test \cref{eq:stop-rule-new} is
satisfied within at most $2s_k+1$ inner iterations, where
\[
    s_k
    :=
    \left\lceil
        \max\left\{
            1,\,
            2\log_+\left(
                \frac{\delta_0}{\epsilon/2}
            \right),\,
            B
        \right\}
    \right\rceil.
\]
Meanwhile, the bound \cref{eq:initial-gap-unified} in \Cref{lemma:initial-proximal-gap} gives
$
    \delta_0
    \leq
    \frac r2\Delta_k
    +
    \left(
        1+\frac r2
    \right)
    \frac{
        L_\nu^{\frac{2}{1+\nu}}
    }{
        \rho
    }
    \Delta_k^{1-r}.
$ 
Recalling that $t_k=2\Delta_k/\epsilon$, we obtain
\[
    \frac{\delta_0}{\epsilon/2}
    \leq
    \frac r2t_k
    +
    \left(
        1+\frac r2
    \right)
    At_k^{1-r}
    \leq
    (1+2A)(1+t_k),
\]
where the last inequality uses $r\in(0,1]$ and
$t_k^{1-r}\leq1+t_k$. Consequently,
\begin{equation}
\label{eq:holder-absolute-log-bound}
    \log_+\left(
        \frac{\delta_0}{\epsilon/2}
    \right)
    \leq
    \log(1+t_k)
    +
    \log(1+2A)
    \leq
    \log(1+t_k)+2A.
\end{equation}

From \cref{eq:holder-absolute-log-bound} and $B\geq4A$,
we know $
    s_k
    \leq
    1+2\log(1+t_k)+B.
$ 
Hence, we have 
$
    T_{\mathrm{inner},k}
    \leq
    2s_k+1
    \leq
    3+4\log(1+t_k)+2B.
$ 
Summing over $k=0,\ldots,N-1$ and using
\cref{eq:absolute-proof-log-sum} yields
\[
    T_{\mathrm{total}} = \sum_{k=0}^{N-1} T_{\mathrm{inner},k}\leq
    N
    \left[
        3
        +
        4\log(3+2R_0)
        + 2B
    \right].
\]
Substituting $B$ and the definition $G_\nu$ in \cref{eq:two-constants} leads to \Cref{eq:total-complexity-absolute-holder}. 
\end{proof}

\Cref{theorem:complexity-absolute-PBM} states that for any fixed proximal parameter $\rho>0$, \cref{eq:total-complexity-absolute-smooth} gives the overall iteration complexity $T_{\mathrm{total}} =  \mathcal O(\epsilon^{-1})$ for the smooth case $\nu = 1$. For $\nu\in[0,1)$, \cref{eq:total-complexity-absolute-holder} leads~to 
$
    T_{\mathrm{total}} = \mathcal O\left( L_\nu^{\frac{2}{1+\nu}}d_0^2\epsilon^{-\frac{2}{1+\nu}}
    \right),
$ 
and in particular, the nonsmooth case $\nu=0$ has the overall iteration complexity $ T_{\mathrm{total}} = \mathcal O\left(L_0^2d_0^2\epsilon^{-2}\right).$ All these rates match the order of those established in \Cref{theorem:complexity-classical-PBM,thm:classical-improved-explicit} for the classical PBM with $\rho > 0$ for the case $\nu = 1$ and $\rho=\Theta(\epsilon)$ for the case $\nu \in [0,1)$. Note that the PBM variant in \Cref{alg:bundle-double-loop-varaint} works for any fixed proximal parameter $\rho>0$ and the target accuracy $\epsilon$ enters in its bundle stopping criterion \cref{eq:stop-rule-new}.    

\begin{remark}[Alternative inexactness]
    In the PBM variant, we use an absolute inexactness criterion \cref{eq:stop-rule-new}. Other stopping criteria may also be considered. For example, in the smooth case, the bundle iterations can be terminated using a relative inexactness condition of the form $e_j \leq \frac{\rho}{2}\|z_{j+1} - y_k\|^2$. This type of relative inexactness has been used to design accelerated inexact PPM in \cite{monteiro2013accelerated}. 
\end{remark}

\subsection{Comparison with modern proximal bundle design} \label{subsection:comparison-modern-PBM}

Our PBM variant in \Cref{alg:bundle-double-loop-varaint} is closely related to several recent PBM developments \cite{liang2021proximal,liang2024unified,liang2026proximal,liang2025primal,
fersztand2024modified}, all of which replace the classical descent test \cref{eq:test-PBM} with alternative criteria for updating the proximal center in the outer loop.

Liang and Monteiro \cite{liang2021proximal} proposed a relaxed proximal bundle method that replaces the classical descent test \cref{eq:test-PBM} with a different criterion. In our notation, this criterion takes the form
\begin{equation} \label{eq:alterantive-criterion-1}
 t_j^{\mathrm{prox}} := F(\tilde y_j)
+\frac{\rho}{2}\|\tilde y_j-y_k\|^2
-
\left(
F_j(z_{j+1})
+\frac{\rho}{2}\|z_{j+1}-y_k\|^2
\right) \leq \frac{\epsilon}{2},
\end{equation}
where $\tilde y_j$ is the best bundle iterate in terms of the proximal subproblem $F(\cdot)+\frac{\rho}{2}\|\cdot-y_k\|^2$, and $\epsilon$ is the target accuracy. 
This stopping criterion \cref{eq:alterantive-criterion-1} was also used later in \cite{liang2025primal,liang2026proximal}. 
For nonsmooth convex functions with bounded subgradients, \cite{liang2021proximal} establishes an overall complexity of $\bigO(\epsilon^{-2})$, matching the rate of our PBM variant when $\nu=0$. Its analysis, however, does not exploit H{\"o}lder smoothness and therefore provides no improved complexity guarantees for $\nu\in(0,1]$. 
The work \cite{liang2025primal} reveals a duality between the conditional gradient method and the cutting-plane scheme used within PBMs, but does not exploit H{\"o}lder smoothness either. The work \cite{liang2026proximal} considers a PBM variant with \cref{eq:alterantive-criterion-1} for H{\"o}lder-smooth functions and establishes the complexity $\widetilde{\bigO}\big(\epsilon^{-\frac{2}{1+\nu}}\big)$, where $\widetilde{\bigO}$ hides a logarithmic factor. Its inner bundle routine, however, requires the full cutting-plane model. In contrast, our analysis accommodates the more general model in \cref{assump:bm-single} and removes the logarithmic factor.

The work \cite{liang2024unified} introduces another stopping rule
\begin{equation} \label{eq:alterantive-criterion-2}
 t_j^{\mathrm{obj}} :=F(\hat y_j)
-
\left(
F_j(z_{j+1})
+\frac{\rho}{2}\|z_{j+1}-y_k\|^2
\right) \leq \frac{\epsilon}{2},
\end{equation}
where $\hat y_j$ is the best bundle iterate in terms of the objective $F$. It considers the hybrid regularity condition
$
\|\nabla f(x)-\nabla f(y)\|
\leq
2M+L\|x-y\|,
\; \forall x,y\in\RR^n,
$ 
where $M,L\geq 0$, which recovers the bounded subgradient setting when $L=0$ and the $L$-smooth setting when $M=0$. \cite[Proposition~2.1]{liang2024unified} reduces the H{\"o}lder condition \cref{eq:weakly-smooth-intro} to the hybrid condition through a tolerance-dependent choice of $(M,L)$ and derives the complexity $\widetilde{\bigO}\big(\epsilon^{-\frac{2}{1+\nu}}\big)$.  More closely related to our stopping criterion \cref{eq:stop-rule-new}, the work \cite{fersztand2024modified} uses the model error $ F(z_{j+1})-F_j(z_{j+1})$ to determine whether to update the outer iterate. It interprets the null steps of the resulting bundle method as fully corrective Frank--Wolfe iterations applied to a dual problem. Its analysis, however, is restricted to objectives consisting of the sum of a smooth convex function and a convex piecewise-linear function.

The stopping measures \cref{eq:alterantive-criterion-1,eq:alterantive-criterion-2} are closely related to our model-error test \cref{eq:stop-rule-new}. Since $\tilde y_j$ and $\hat y_j$ are chosen as the best bundle iterates under their respective criteria, we have
$    t_j^{\mathrm{prox}} \leq e_j,\,
    t_j^{\mathrm{obj}}\leq e_j-\frac{\rho}{2}\|z_{j+1}-y_k\|^2\leq e_j.
$ 
Thus, our model-error test is stronger at the same tolerance. One main advantage of \cref{eq:stop-rule-new} is that the optimality of the bundle subproblem directly yields
$
    z_{j+1}=y_k-\frac{1}{\rho}s_{j+1},\,
    s_{j+1}\in\partial_{e_j}F(z_{j+1}),
$ 
so it immediately certifies the subgradient inexactness; see \cref{eq:z-inexact-update}. This fact leads to the clean outer-loop complexity in \cref{eq:outer-complexity-absolute}. In contrast, $t_j^{\mathrm{prox}}$ certifies proximal accuracy at an auxiliary best point, and $t_j^{\mathrm{obj}}$ is tailored to objective-based outer progress; neither directly controls the subgradient inexactness at the accepted candidate.

\section{Numerical experiments}
\label{sec:numerics}
In this section, we present numerical experiments to demonstrate the convergence behavior of the PBMs under H\"older smoothness. In our implementation of \ProDes{}, given an iterate $y_k$, we use the linearization for the first model $
    f_0(y) =f(y_k) + \innerproduct{\nabla f(y_k)}{y-y_k},
$ and the essential two-cut model 
$
    f_j(y) = \max\{ f(z_{j}) + \innerproduct{g_{j}}{y-z_{j}},f_{j-1}(z_{j}) + \innerproduct{a_{j}}{y-z_{j}} \}
$ 
for $j > 0$ that do not satisfy the condition $\mathcal{S}$; see \cref{eq:bundle-subgradient,eq:bundle-aggregation}.

\subsection{Convergence of \ProDes{}}
\label{subsec:numeric-inner-loop}
We first illustrate the convergence rate of the subroutine \ProDes{} established in \cref{lemma:error,coro:convergence-d}. Consider the following family of functions
\begin{align}
    \label{eq:familiy-function}
    f_{\nu}(x) = \frac{1}{m}
    \sum_{i=1}^{m} \frac{1}{1+\nu} |a_i^\tr x - b_i |^{1+\nu},
\end{align}
where $\nu \in [0,1]$, and $a_i \in \RR^{n}$, $n = 5$, and $m = 20$ are problem data. The quantity $\nu$ determines the smoothness level, i.e., the function $f_{\nu}$ satisfies \cref{eq:weakly-smooth-intro} with the smoothness level $\nu$. The data $a_i, b_i$, $i = 1,\ldots,m$, are generated as follows: $a_i \sim \mathcal{N}(0,I_n)$ (a normal distribution with mean $0$ and covariance matrix $I_n$, where $I_n$ is an identity matrix with dimension $n$), and $b_i \sim \mathcal{N}(0,0.01)$. In the experiment, we vary $\nu \in \{0,1/3,2/3,1\}$, fix a center point $\bar x = 10^{-3}\times v \in \RR^n $, where $v\sim \mathcal{N}(0,I_n)$, choose a proximal parameter $\rho = 0.5$, and run \ProDes{}($\bar x, \rho, \mathcal{S}$) in \Cref{alg:Proxi-descent-subproblem}. The numerical result is presented in \cref{fig:numerical-experiment-inner-loop}. The left figure shows the evolution of the value gap $\delta_j$, and the right figure shows the evolution of the best model error $\min_{1\leq k \leq j} e_k$. To compute the true Moreau envelope $F_{1/\rho}(\bar x)$ in the definition of $\delta_j =  F_{1/\rho}(\bar x)- m_j$, we run gradient descent on $\min_{x} f_{\nu}(x) + \frac{\rho}{2}\|x - \bar x \|^2$ with Armijo backtracking line search for $20,000$ iterations or until the gradient norm is less than $10^{-12}$. As established in \cref{lemma:error,coro:convergence-d}, both quantities have the convergence rate of $\bigO(j^{-\frac{1+\nu}{1-\nu}})$ for $\nu \in [0,1)$ and the $\bigO(e^{-aj})$ with some $a \in (0,1)$ for $\nu = 1$. This is consistent with \cref{fig:numerical-experiment-inner-loop} for different values of $\nu$. In particular, as the smoothness level increases, the subroutine \ProDes{} converges faster. We emphasize that the convergence improvement is automatic with no parameter tuning in \Cref{alg:Proxi-descent-subproblem}. 

\begin{figure}[t]
    \centering
    \begin{subfigure}[t]{0.45\textwidth}
        \centering
        \includegraphics[width=\textwidth]{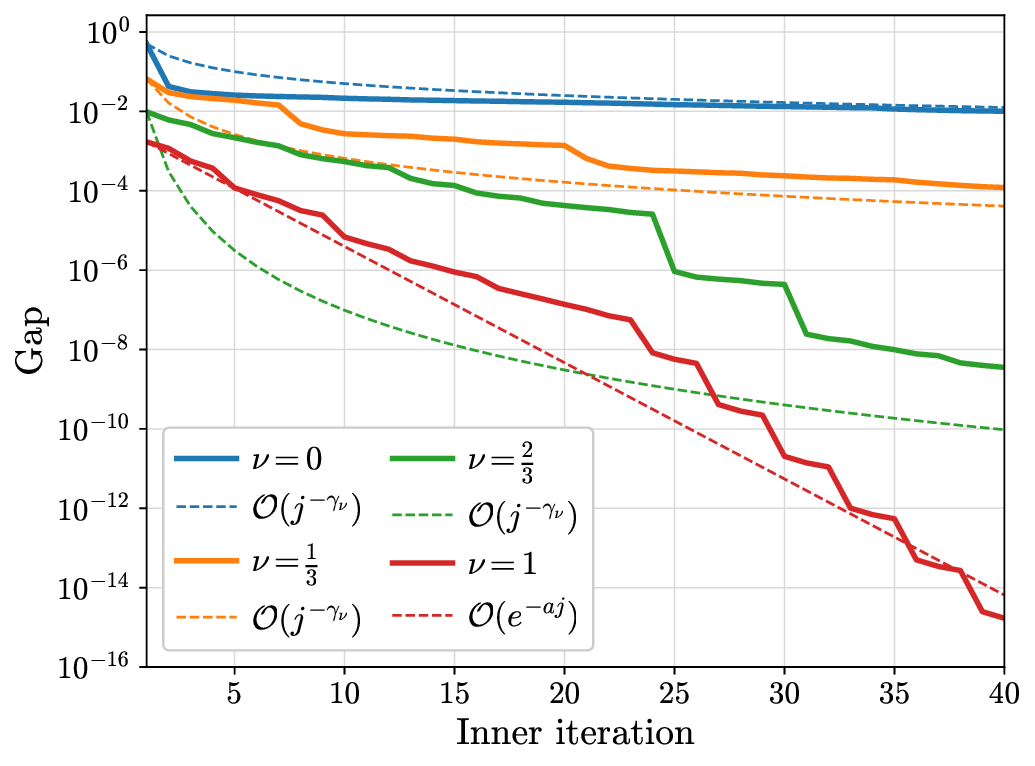}
    \end{subfigure}
    \hspace{3mm}
    \begin{subfigure}[t]{0.45\textwidth}
        \centering
        \includegraphics[width=\textwidth]{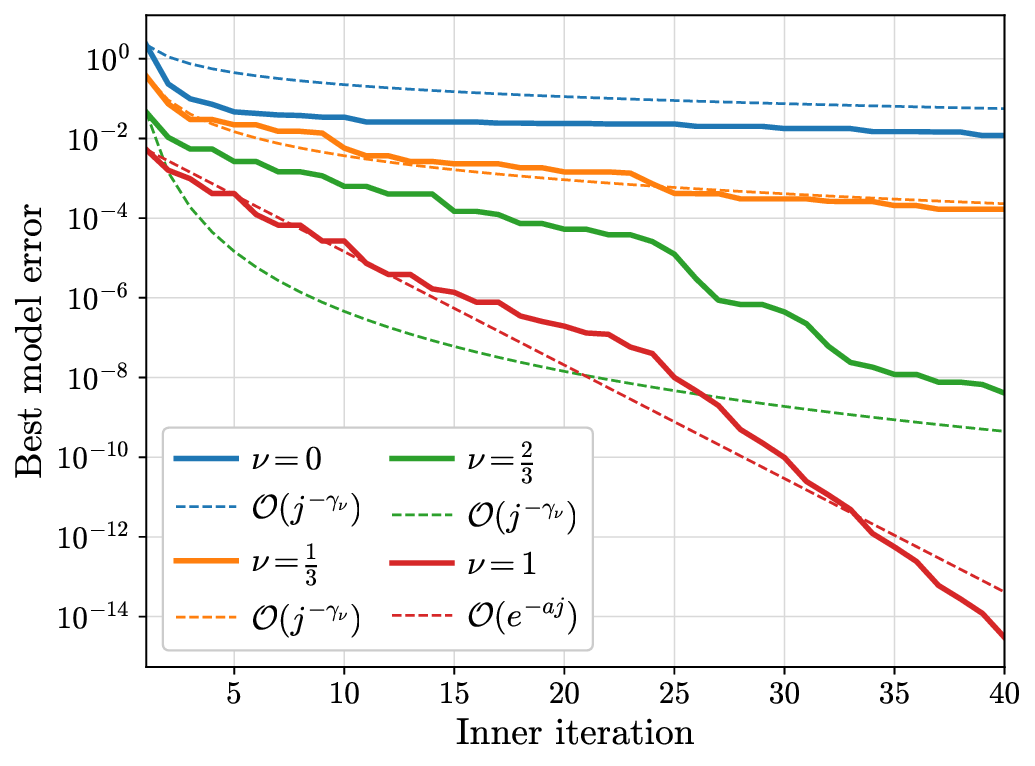}
    \end{subfigure}

\caption{Numerical behavior of \ProDes{} for solving \cref{eq:familiy-function} with $\nu \in \{0,1/3,2/3,1\}$. The gap and best model error denote $\delta_j$ and $\min_{1\leq k \leq j}e_k$, respectively, defined in \cref{eq:errors-j-th-iteration-d,eq:errors-j-th-iteration-e}. We also denote $\gamma_{\nu} = \frac{1+\nu}{1-\nu}$ for $\nu \in [0,1)$. The geometric decay constant $a$ is approximately $0.67$ for the case $\nu = 1$.} 
\label{fig:numerical-experiment-inner-loop}
\end{figure}

\subsection{Sparse logistic regression}
\label{subsec:logistic}
We next consider the sparse binary logistic regression
\begin{equation}
    \label{eq:sparse-logistic-regression}
    \min_{x \in \RR^n}     \frac{1}{m}
    \sum_{i=1}^{m}
    \log\!\left(1+\exp\!\left(-b_i a_i^{\tr}x\right)\right)
    +
    \lambda \|x\|_1,
\end{equation}
where  $ a_i \in \RR^n, b_i \in \{-1,1\},i=1,\ldots,m,$ and $\lambda > 0$ is the regularization parameter.  The nonsmooth term $\lambda \|x\|_1$ is standard to promote sparsity in the solution. We view \cref{eq:sparse-logistic-regression} in the form of \cref{eq:pb-main} with $f(x) =\frac{1}{m}
    \sum_{i=1}^{m}
    \log\!\left(1+\exp\!\left(-b_i a_i^{\tr}x\right)\right) $ and $h(x) = \lambda \|x\|_1$, consider two problem datasets ``a9a'' ($ m = 32,561, n =  123$) and ``gisette'' ($m = 6000, n = 5000$) from the library LIBSVM \cite{CC01a}, and set $\lambda = 0.1\|A^\tr b\|_{\infty}/(2m)$, where $A = [a_1^\tr; a_2^\tr; \dots a_m^\tr]$. 
    
    We fix the proximal parameter $\rho = 10^{-2}$, run the classical PBM \Cref{alg:bundle-double-loop} with $\beta \in \{ 0.25,0.75\}$ and the PBM variant \Cref{alg:bundle-double-loop-varaint} with $\epsilon_k \in  \{10^{-2}, 10^{-3}\}$ for $10^3$ iterations, and report the numerical result in \cref{fig:numerical-experiment-sparse-logistic-regression} where each flat segment corresponds to the cycle of each inner loop. In this application, the subproblem does not admit a closed-form solution, but it can be solved by bisection. We provide the implementation details in \cref{sec:implementation}. 
    The true optimal value $F^\star$ is estimated by running FISTA \cite{beck2009fast} until the proximal-gradient residual $
R_k
=
L_k\left\|
y_k-
\operatorname{soft}\!\left(
y_k-\frac{\nabla f(y_k)}{L_k},
\frac{\lambda}{L_k}
\right)
\right\|
$
satisfies
$
R_k
\leq
\delta \max\left\{1,\|y_{k+1}\|\right\},
$
where $1/L_k$ is the stepsize in FISTA and $\delta=10^{-8}$ for ``a9a'' and  $\delta=10^{-7}$ for ``gisette''. We observe that the classical PBM with both $\beta = 0.25$ and $0.75$ quickly achieves high accuracy on the two considered datasets. In particular, it converges to the accuracy of $10^{-13}$ at about $800$ iterations. On the other hand, the PBM variant stops making progress when reaching the accuracy of a certain level. For example, for the dataset ``gisette", the PBM variant with $\epsilon = 10^{-2}$ (resp. $\epsilon = 10^{-3}$) stops making progress when the accuracy is around $10^{-3}$ (resp. $10^{-6}$). This behavior is qualitatively consistent with the use of a fixed
absolute inner tolerance. \cref{theorem:complexity-absolute-PBM} only guarantees convergence to an
$\epsilon$-optimal solution but does not guarantee further progress.

\begin{figure}[t]
    \centering
    \begin{subfigure}[t]{0.45\textwidth}
        \centering
        \includegraphics[width=\textwidth]{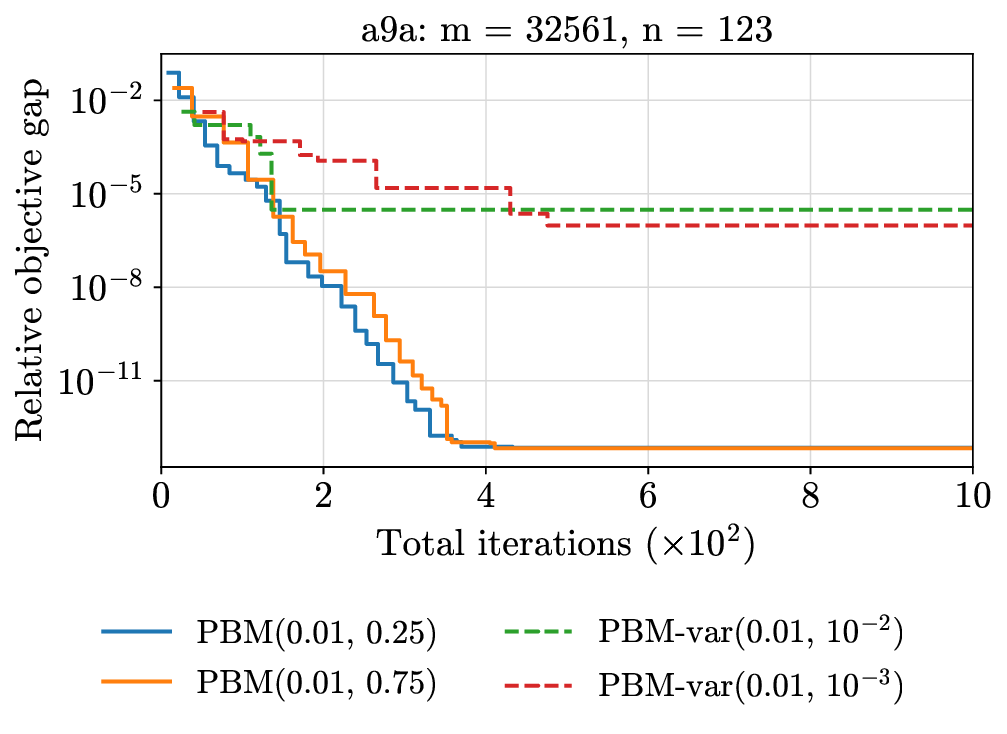}
    \end{subfigure}
    \hspace{3mm}
    \begin{subfigure}[t]{0.45\textwidth}
        \centering
        \includegraphics[width=\textwidth]{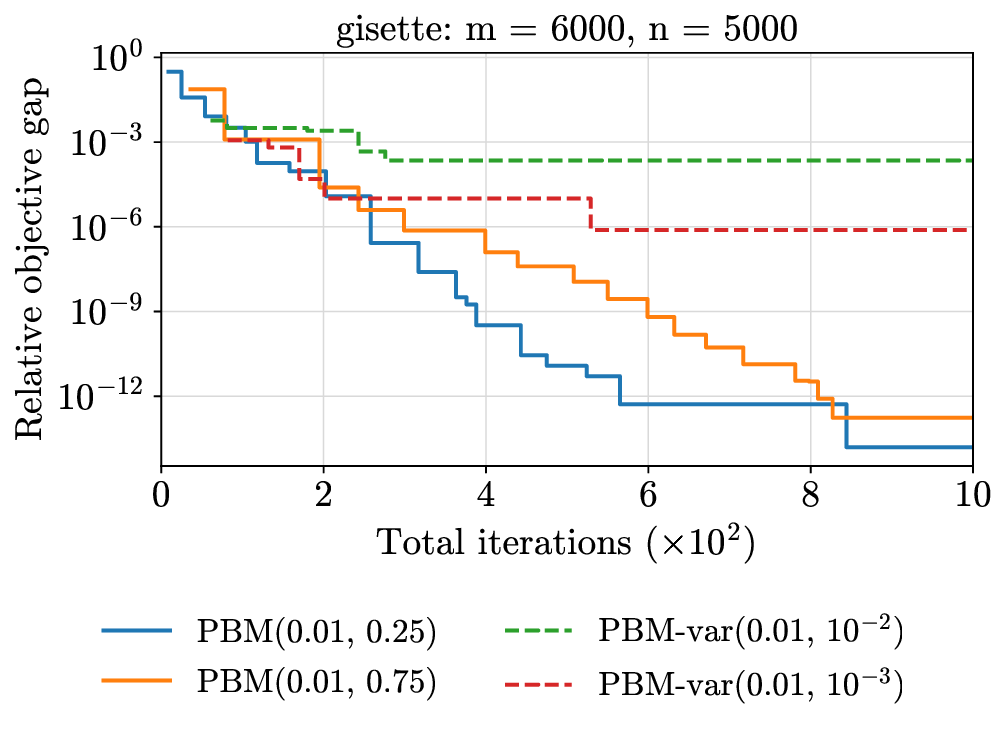}
    \end{subfigure}
\caption{Comparison between the classical PBM \Cref{alg:bundle-double-loop} and the PBM variant \Cref{alg:bundle-double-loop-varaint} on solving the sparse logistic regression \cref{eq:sparse-logistic-regression} for the datasets ``a9a" and ``gisette". The notation PBM($x,y$) denotes the classical PBM with parameters $\rho = x$ and $\beta = y$, and PBM-var($x,y$) denotes the PBM variant with parameters $\rho = x$ and $\epsilon_k = y$ for all $k$. Relative objective gap denotes the quantity $|\frac{F(x_k) - F^\star}{F^\star}|$. For the PBM variant, we report the best relative objective gap.
}
\label{fig:numerical-experiment-sparse-logistic-regression}
\end{figure}

\subsection{Hinge-loss classification}
\label{subsec:hinge-loss} 
As our final experiment, we consider the hinge-loss classification in the form of 
\begin{align}
    \label{eq:hinge-loss}
    \min_{x} \frac1m\sum_{i=1}^m\max\{0,1-b_i a_i^\tr x\} + \frac{\lambda}{2}\lVert x\rVert^2
\end{align}
where  $ a_i \in \RR^n, b_i \in \{-1,1\},i=1,\ldots,m,$ are problem data and $\lambda > 0$ is the regularization parameter. We treat $f(x) = \frac1m\sum_{i=1}^m\max\{0,1-b_i a_i^T x\}$ and $h(x) = \frac{\lambda}{2}\lVert x\rVert^2$. It is clear that the function $f$ is a nonsmooth function satisfying \cref{eq:weakly-smooth-intro} with $\nu = 0$. Similarly, we consider two problem datasets ``a9a'' ($ m = 32,561, n =  123$) and ``w8a'' ($m = 49749, n = 300$) from the library LIBSVM \cite{CC01a}, and set $\lambda = 10^{-3}$. We fix the proximal parameter $\rho = 10^{-3}$, run the classical PBM \Cref{alg:bundle-double-loop} with $\beta \in \{ 0.25,0.75\}$ and the PBM variant \Cref{alg:bundle-double-loop-varaint} with $\epsilon_k = \epsilon \in  \{10^{-2}, 10^{-4}\}$ for $10^4$ iterations. The numerical result is reported in \cref{fig:numerical-experiment-hinge-loss}. The true optimal value $F^\star$ is estimated by solving the dual problem of \cref{eq:hinge-loss} (see \cref{subsec:hinge-loss-Fstar} for details). Similar to \cref{fig:numerical-experiment-sparse-logistic-regression}, the classical PBM is able to continue decreasing the cost value, while the PBM variant plateaus after reaching a certain accuracy depending on the inner-loop inexactness. 

\begin{figure}[t]
    \centering
    \begin{subfigure}[t]{0.45\textwidth}
        \centering
        \includegraphics[width=\textwidth]{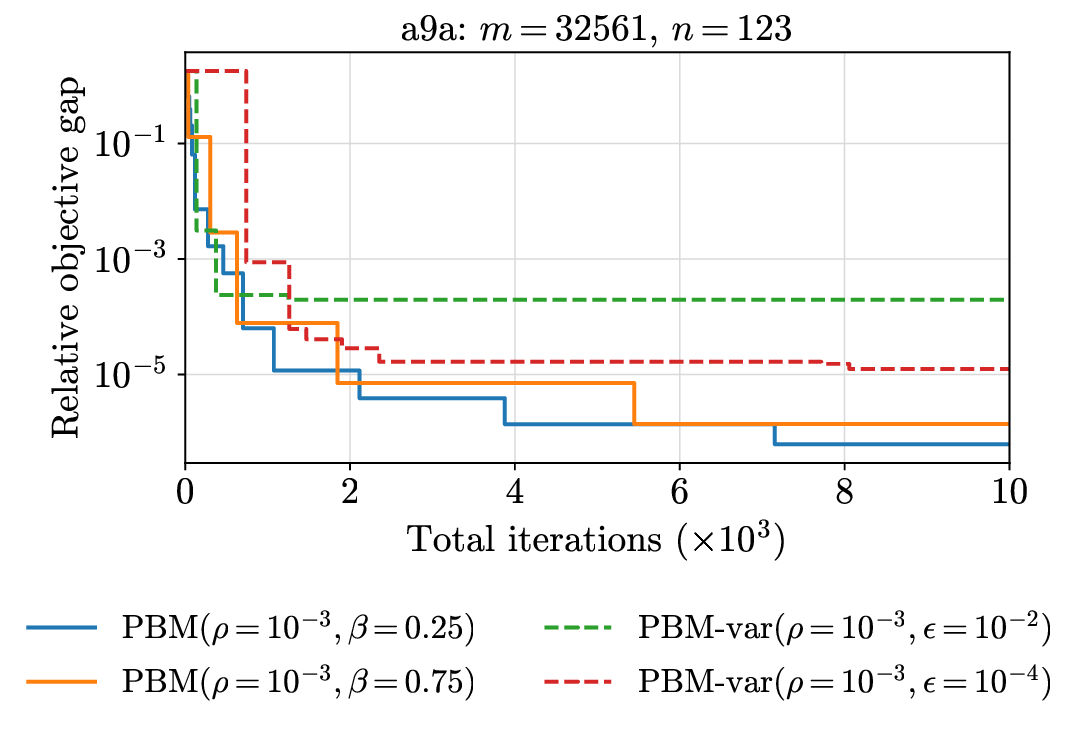}
    \end{subfigure}
    \hspace{3mm}
    \begin{subfigure}[t]{0.45\textwidth}
        \centering
        \includegraphics[width=\textwidth]{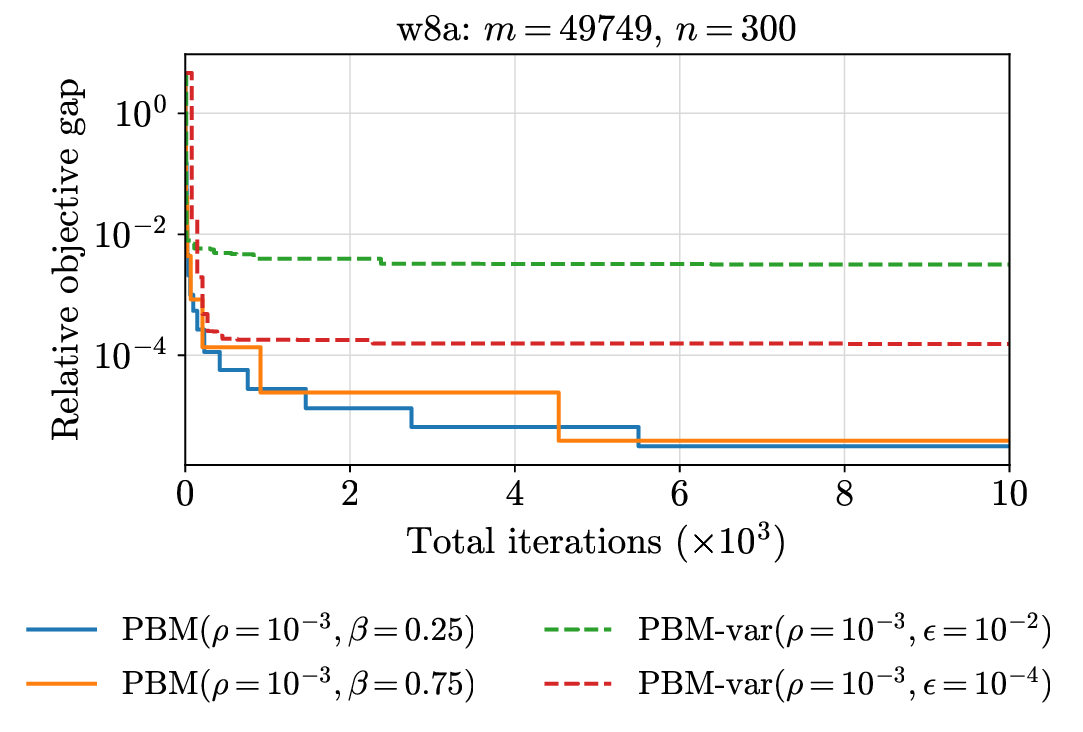}
    \end{subfigure}
\caption{Comparison between the classical PBM \Cref{alg:bundle-double-loop} and the PBM variant \Cref{alg:bundle-double-loop-varaint} on solving  \cref{eq:hinge-loss} for the datasets ``a9a" and ``w8a". The notation and reported quantities are the same as in \cref{fig:numerical-experiment-sparse-logistic-regression}.}
\label{fig:numerical-experiment-hinge-loss}
\end{figure}

\section{Conclusion}
\label{sec:conclusion}
This paper has revisited the classical PBM from a double-loop perspective, by viewing the null-step cycle as an inner solver for the proximal subproblem and the descent test as its stopping criterion. We show that the convergence rate of the inner bundle iterations automatically adapts to H{\"o}lder smoothness. Combining this inner-loop rate with the descent-step analysis yields improved complexity bounds for the classical PBM with sharper parameter dependence. We have also studied a computable model-error test that leads to a PBM variant, which has a clear inexact proximal-point interpretation. Overall, our framework separates the roles of the descent and null steps and provides a unified understanding of PBMs under different stopping rules. Future directions include developing universally accelerated PBMs through suitable acceleration schemes and extending the double-loop perspective to stochastic optimization.

\subsection*{Use of generative AI}
After finishing the first version of this work, the authors used ChatGPT 5.6 Sol and Claude Opus 5 to assist with grammar checking and to check and improve intermediate derivations and proof arguments. The authors independently verified all mathematical statements and proofs. The~authors assume responsibility for all content.

\bibliographystyle{unsrt}
\bibliography{references}

\newpage
\appendix
\section*{Appendix}

\section{Sequence analysis} \label{app:sequence}
One key ingredient in our non-asymptotic analysis is the sequence convergence in \cref{lemma:sequence-recursion}. For convenience, we re-state this result below. 

\begin{lemma}
\label{lemma:sequence-recursion-app}
Let $p \in [1,2]$, $c>0$, and let $\{a_j\}_{j\geq 0}$ be a nonnegative sequence with $a_0 >0$ satisfying
\begin{equation}
\label{eq:sequence-recursion}
    a_j \leq a_{j-1} -  c a_j^p,
    \qquad j\geq 1.
\end{equation}
If $p=1$, then $a_j\leq {a_0}{(1+c)^{-j}}, \, j\geq 1.$
    If $p\in(1,2]$, then we have 
    \begin{equation*}
        a_j \leq \max\left\{ a_0 e^{-j/2},\left(\frac{2}{(p-1)cj}\right)^{1/(p-1)}
        \right\}, \qquad j\geq 1.
    \end{equation*}
\end{lemma}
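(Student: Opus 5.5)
For $p=1$ I would argue by direct induction. The recursion reads $(1+c)a_j\le a_{j-1}$, so $a_j\le a_{j-1}/(1+c)$, and iterating from $a_0$ gives $a_j\le a_0(1+c)^{-j}$. Nonnegativity of the sequence is all that is needed.

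For $p\in(1,2]$ I would work with the potential $b_j:=a_j^{1-p}$, which increases as $a_j$ decreases. First, if some $a_j=0$, then the recursion together with nonnegativity forces $a_{j'}=0$ for all $j'\ge j$, and the bound is trivial. So I will assume $a_j>0$ throughout. The recursion gives $a_j\le a_{j-1}$, so set $x_j:=a_j/a_{j-1}\in(0,1]$. Each step $j$ is then classified as one of two types.

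\emph{Type A steps} ($x_j\ge 1/2$). Here
\[
b_j-b_{j-1}=a_j^{1-p}\bigl(1-x_j^{p-1}\bigr),
\]
and the recursion $a_j(1/x_j-1)\ge c\,a_j^p$ gives $a_j^{1-p}\ge c\,x_j/(1-x_j)$. By the mean value theorem, and since $p-2\le 0$, we have $1-x^{p-1}\ge (p-1)(1-x)$ on $(0,1]$. Combining the two,
\[
b_j-b_{j-1}\ge c(p-1)x_j\ge \tfrac{(p-1)c}{2}.
\]
The threshold $1/2$ is chosen exactly so that this increment matches the constant $2/((p-1)c)$ in the target bound.

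\emph{Type B steps} ($x_j<1/2$). Here $a_j<a_{j-1}/2$, and $b$ also increases, since $b_j>2^{p-1}b_{j-1}\ge b_{j-1}$.

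Now fix $j$ and let $J_A,J_B$ be the numbers of type A and type B steps among indices $1,\dots,j$. Since $b$ is nondecreasing, $b_j\ge J_A(p-1)c/2$, i.e.
\[
a_j\le\Bigl(\tfrac{2}{(p-1)cJ_A}\Bigr)^{1/(p-1)}.
\]
Since $a$ is nonincreasing and halves at every type B step, $a_j\le a_0 2^{-J_B}$. The lemma then follows if, in every split of $j=J_A+J_B$, one of these two bounds is dominated by the corresponding term of the stated maximum.

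The main obstacle is precisely this constant bookkeeping. The naive split ``$J_A\ge j/2$ or $J_B\ge j/2$'' yields $\max\{a_0 2^{-j/2},(4/((p-1)cj))^{1/(p-1)}\}$, which is off from the stated $\max\{a_0e^{-j/2},(2/((p-1)cj))^{1/(p-1)}\}$ by constant factors. To close the gap, I would first try a $p$-dependent threshold $x_j\ge\theta$ and exploit that type B steps also push $b$ up multiplicatively, by a factor of at least $\theta^{1-p}$. Concretely, I would prove
\[
b_j\ge \max\Bigl\{\,b_0\,\theta^{(1-p)J_B},\; J_A\,c(p-1)\theta\,\Bigr\}
\]
and then optimize $\theta$ jointly with the case split. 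In the first branch I would use $\theta^{(1-p)J_B}\ge e^{(p-1)j/2}$, which translates into $a_j\le a_0e^{-j/2}$. If the exact constants still do not come out, I would fall back to proving the bound with slightly worse absolute constants. Every downstream use, namely \cref{coro:convergence-d}, \cref{lemma:error} and the inner-loop counts, only consumes the form $\mathcal{O}(\log(a_0/\epsilon)+\epsilon^{1-p}/c)$, so I would then adjust those constants correspondingly.
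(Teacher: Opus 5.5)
There is a genuine gap: your argument proves the lemma only with worse constants, and the repair you propose cannot recover the stated ones. Your treatment of $p=1$, the reduction to $a_j>0$, and your per-step estimates are all correct. The trouble is the threshold argument. With a threshold $\theta$, your two branches give the claimed bound only if every split $J_A+J_B=j$ satisfies $J_A\theta\ge j/2$ or $J_B\log(1/\theta)\ge j/2$. Take $J_A$ just below $j/(2\theta)$ and let $j$ grow. This forces $\bigl(1-\frac{1}{2\theta}\bigr)\log\frac{1}{\theta}\ge\frac12$, but the left-hand side is at most about $0.1$ on $[\frac12,1]$. For $\theta<\frac12$, the all-type-A split already violates the first condition.

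So no threshold works. A hard classification always discards part of each step's progress: the $\log(1/x_j)$ decrease at type-A steps, and the additive increase of $b$ at type-B steps. Your fallback of weakening the constants would prove a different statement. The constants $e^{-j/2}$ and $2/((p-1)cj)$ are exactly what produce $\frac{4(1+\nu)}{(1-\nu)\rho G_\nu j}$ in \cref{coro:convergence-d}. They also produce the $2\log_+$ term in $s_k$ and the explicit $\frac{8}{1-\nu}$ and $4\log\frac{1}{1-\beta}$ in \cref{theorem:complexity-classical-PBM}.

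The missing idea is to drop the classification and add two potentials. Your type-A computation never used $x_j\ge\frac12$, so for every step $\frac{b_j-b_{j-1}}{(p-1)c}\ge x_j$. Also, trivially, $\log\frac{a_{j-1}}{a_j}=\log\frac{1}{x_j}\ge 1-x_j$. Hence $\frac{a^{1-p}}{(p-1)c}+\log\frac1a$ increases by at least $1$ at every step, and summing gives
\[
\frac{a_j^{1-p}-a_0^{1-p}}{(p-1)c}+\log\frac{a_0}{a_j}\ \ge\ j .
\]
One of the two terms is then at least $j/2$. This gives either $a_j\le\bigl(\frac{2}{(p-1)cj}\bigr)^{1/(p-1)}$ or $a_j\le a_0e^{-j/2}$, exactly as stated. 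This is the paper's proof, which writes the two increments as $\ge\frac{1}{1+t_i}$ and $\ge\frac{t_i}{1+t_i}$ with $t_i=ca_i^{p-1}$. The key is the soft split, in which each step credits $x_j$ to one potential and $1-x_j$ to the other, so nothing is wasted.
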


\begin{proof}
If $p=1$, \cref{eq:sequence-recursion} gives
$
    (1+c)a_j\leq a_{j-1},
$
and iterating this inequality yields
$
    a_j\leq a_0(1+c)^{-j}.
$ 

Now suppose $p\in(1,2]$ and define $r:=p-1\in(0,1]$. Since
$\{a_j\}$ is nonincreasing, the result is immediate if $a_j=0$.
We thus assume $a_j>0$, which also implies $a_i>0$ for 
$i=0,\ldots,j$. By \cref{eq:sequence-recursion},
\begin{align}
    \label{eq:sequence-recursion-reform}
    \frac{a_{i-1}}{a_i}
    \geq
    1+ca_i^r,
    \qquad i=1,\ldots,j.
\end{align}
Writing $t_i:=ca_i^r$, we obtain
\begin{align*}
    \frac{a_i^{-r}-a_{i-1}^{-r}}{rc} =\frac{a_i^{-r}}{rc}
    \left[
        1-
        \left(\frac{a_i}{a_{i-1}}\right)^r
    \right] \geq
    \frac{1-(1+t_i)^{-r}}{rt_i},
\end{align*}
where the inequality uses \cref{eq:sequence-recursion-reform}. Next, for every $t\geq 0$ and $r\in(0,1]$, we claim that
\begin{align}
    \label{eq:cliam-lower}
       1-(1+t)^{-r}
    \geq
    \frac{rt}{1+t}.
\end{align}
It follows that 
\begin{equation}
\label{eq:sequence-inverse-power-increment}
    \frac{a_i^{-r}-a_{i-1}^{-r}}{rc}
    \geq
    \frac{1}{1+t_i}.
\end{equation}
Moreover, taking the log of \cref{eq:sequence-recursion-reform} gives
\begin{equation}
\label{eq:sequence-logarithmic-increment}
    \log\left(\frac{a_{i-1}}{a_i}\right)
    \geq
    \log(1+t_i)
    \geq
    \frac{t_i}{1+t_i},
\end{equation}
where the last inequality follows from the fact that 
$
    \log(1+t)  = \int_0^t \frac{1}{1+s}\,ds \geq \int_0^t \frac{1}{1+t}\,ds =  \frac{t}{1+t},
    \, t\geq 0.
$ 
Adding \cref{eq:sequence-inverse-power-increment,eq:sequence-logarithmic-increment}
and summing over $i=1,\ldots,j$ gives
\[
    \frac{a_j^{-r}-a_0^{-r}}{rc}
    +
    \log\left(\frac{a_0}{a_j}\right)
    \geq j.
\]
Therefore, at least one of the following inequalities must hold: $
    {(a_j^{-r}-a_0^{-r})}/({rc})
    \geq
    {j}/{2}$,
   or $ 
    \log\left({a_0}/{a_j}\right)
    \geq
    {j}/{2}.
$ 
The first inequality implies
\[
    a_j
    \leq
    \left(a_0^{-r}+\frac{rcj}{2}\right)^{-1/r}
    \leq
    \left(\frac{2}{rcj}\right)^{1/r},
\]
and the second gives $ a_j\leq a_0e^{-j/2}$. Combining these two cases proves the desired bound.

\textbf{Proof of \cref{eq:cliam-lower}}: The function $t\mapsto g(t)= (1+t)^{1-r}$ is concave if $r \in (0,1]$. Thus, it holds that $(1+t)^{1-r} = g(t) \leq g(0) + g^\prime(0) t  = 1 + t -rt.$ Dividing both sides by $(1+t)$ and simplifying the expression gives \cref{eq:cliam-lower}.
\end{proof}

\section{Proof of \cref{proposition:outer-loop-classical}}
\label{appendix:outer-loop-PBM}
    Recall that $\Phi_k =F(y_k)-F^\star$ denotes the cost gap and $ \sup \{\Dist(x,X^\star) \mid F(x) \leq F(y_0)\} \leq D  < +\infty$. 
    Note that \cref{lemma:proximal-gap} implies \cref{eq:proximal-gap-objective-gap}, which is rewritten below for convenience 
    \begin{equation*}
    \label{eq:claim}
\begin{aligned}
\Delta_k & \ge
\begin{cases}
\dfrac{1}{2\rho}
\left(
\dfrac{\Phi_k}
{D}
\right)^2,
&
\text{if } \Phi_k
\le
\rho D^2,
\\
\dfrac{1}{2}\Phi_k,
&
\text{otherwise}.
\end{cases}
\end{aligned}
\end{equation*}

\textbf{Case 1}: $\Phi_k > \rho D^2$.  
    From \cref{lemma:outer-loop-PBM-classical}, the cost improvement is 
$            \Phi_{k+1} \leq \Phi_k - \beta \Delta_k \leq \Phi_k - \frac{\beta}{2} \Phi_k = \left (1-\frac{\beta}{2} \right)\Phi_k $. 
        Thus, there is at most $\frac{\log(\Phi_0/(\rho D^2))}{\log(1 / (1-{\beta}/{2}))}$ iterations such that  $\Phi_k >  \rho D^2$.  
        
\textbf{Case 2}:  $\Phi_k\leq \rho D^2$. 
        From \cref{lemma:outer-loop-PBM-classical}, the cost improvement is 
        $
             \Phi_{k+1} \leq \Phi_k - \beta \Delta_k \leq \Phi_k  - \frac{\beta}{2\rho D^2} \Phi_k^2.
       $ 
        Let $c = \frac{\beta}{2\rho D^2}.$ Since $\Phi_k - \Phi_{k+1} \geq c \Phi_k^2$, it follows that 
        \[
            \begin{aligned}
            \frac{1}{\Phi_{k+1}}-\frac{1}{\Phi_k}
            &=
            \frac{\Phi_k-\Phi_{k+1}}
            {\Phi_k\Phi_{k+1}} \ge
            \frac{c\Phi_k^2}
            {\Phi_k\Phi_{k+1}} =
            c\frac{\Phi_k}{\Phi_{k+1}} \ge c.
            \end{aligned}
        \]
Let $k_0 = \min \{ k  : \Phi_k \leq \rho D^2 \}$ be the first index such that $\Phi_k$ is less than $\rho D^2$. 
Telescoping the above inequality gives
$
\frac{1}{\Phi_{k_0+t}}
\ge
\frac{1}{\Phi_{k_0}}
+ ct, 
$ 
which further implies
\[
\Phi_{k_0 + t}
\le
\frac{1}{
\frac{1}{\Phi_{k_0}}
+
ct} = \frac{\Phi_{k_0}}{1+\Phi_{k_0}ct} \leq \frac{1}{ct} 
=\frac{2\rho D^2}{\beta t}, \quad \forall t \geq 1. 
\]
Thus, there are at most $\frac{2\rho D^2}{\beta \epsilon}$ iterations before reaching $\Phi_k \leq \epsilon$. 

Summing the bounds in the above two cases finishes the proof.

\section{Proof of \Cref{lemma:initial-proximal-gap}} \label{appendix:initial-proximal-gap}
Since $y_k\notin X^\star$, we have $\Delta_k>0$. For notational simplicity, define
\[
    \lambda_k
    :=
    L_\nu^{\frac{2}{1+\nu}}
    \Delta_k^{-\frac{1-\nu}{1+\nu}},
    \qquad
    \eta_\nu
    :=
    \frac{1-\nu}{2(1+\nu)}.
\]
Applying the weighted arithmetic--geometric mean inequality
\(a^\theta b^{1-\theta}\leq\theta a+(1-\theta)b\) with
$
    a=\frac{\lambda_k}{1+\nu}\|u\|^2, b=\frac{\Delta_k}{1+\nu}, \theta = \frac{1+\nu}{2},
$  
gives that for all $u$, 

\[
\begin{aligned}
\frac{L_\nu}{1+\nu}\|u\|^{1+\nu}
&=
\left(
\frac{\lambda_k}{1+\nu}\|u\|^2
\right)^{\frac{1+\nu}{2}}
\left(
\frac{\Delta_k}{1+\nu}
\right)^{\frac{1-\nu}{2}}
\leq
\frac{\lambda_k}{2}\|u\|^2+\eta_\nu\Delta_k.
\end{aligned}
\]
Consequently, the upper bound \cref{eq:weakly-smooth-conseq} from the H{\"o}lder smoothness implies
\begin{equation}
\label{eq:holder-quadratic-majorization}
    f(y_k+u)
    \leq
    f(y_k)
    +
    \langle g_k,u\rangle
    +
    \frac{\lambda_k}{2}\|u\|^2
    +
    \eta_\nu\Delta_k,
\end{equation}
where $g_k$ is the subgradient used to construct the initial cutting
plane at $y_k$.

Let $z_1$ be the first trial point and write $d=z_1-y_k$. Since the
initial model contains this cutting plane, we have $
    f(y_k)+\langle g_k,d\rangle
    \leq
    f_0(z_1).
$ 
Applying \cref{eq:holder-quadratic-majorization} with $u=td$ and
using convexity of $h$, we obtain, for every $t\in[0,1]$,
\[
\begin{aligned}
    F(y_k+td)
    &\leq
    (1-t)F(y_k)
    +
    tF_0(z_1)
    +
    \frac{\lambda_k t^2}{2}\|d\|^2
    +
    \eta_\nu\Delta_k.
\end{aligned}
\]
The definition of the Moreau envelope therefore yields
\[
\begin{aligned}
    F(y_k) - \Delta_k = F_{1/\rho}(y_k)
    &\leq
    F(y_k+td)
    +
    \frac{\rho t^2}{2}\|d\|^2 \\
    &\leq
    (1-t)F(y_k)
    +
    tF_0(z_1)
    +
    \frac{(\lambda_k+\rho)t^2}{2}\|d\|^2
    +
    \eta_\nu\Delta_k.
\end{aligned}
\]
Moreover, the definitions of $\Delta_k$ and $\delta_0$ give
$
    F_0(z_1) = F(y_k) - \Delta_k  - \delta_0 - \frac{\rho}{2}\|d\|^2.
$ 
Substituting this identity into the inequality above shows that
\[
    \Delta_k
    \geq
    t(\Delta_k+\delta_0)
    +
    \frac{
        \rho t-(\lambda_k+\rho)t^2
    }{2}\|d\|^2
    -
    \eta_\nu\Delta_k.
\]
Choosing $t=\rho/(\lambda_k+\rho)$, we obtain
 $
    (1+\eta_\nu)\Delta_k
    \geq
    \frac{\rho}{\lambda_k+\rho}
    (\Delta_k+\delta_0), 
$
which implies 
\[
    \delta_0
    \leq
    \eta_\nu\Delta_k
    +
    (1+\eta_\nu)
    \frac{\lambda_k}{\rho}\Delta_k.
\]
Substituting the definitions of $\lambda_k$ and $\eta_\nu$ gives
\cref{eq:initial-gap-unified}.

\section{Implementation details}
\subsection{Solving the composite subproblem}
\label{sec:implementation}
We detail the implementation for solving the composite model subproblem
\[
\min_x\;
f_j(x) +\lambda\lVert x\rVert_1
+\frac{\rho}{2}\lVert x-y_k\rVert^2,
\]
where $f_j(x)  =  \max\{ f(z_{j}) + \innerproduct{g_{j}}{x-z_{j}},f_{j-1}(z_{j}) + \innerproduct{a_{j}}{x-z_{j}} \}$. Although the subproblem does not have a closed-form solution, it reduces to a one-dimensional concave dual problem. Specifically, let $\ell_1(x) = f(z_{j}) + \innerproduct{g_{j}}{x-z_{j}}$ and $ \ell_2(x) = f_{j-1}(z_{j}) + \innerproduct{a_{j}}{x-z_{j}}$. Note that the max of two affine functions can be written equivalently as
\[
\max\{\ell_1(x),\ell_2(x)\}
=
\max_{0\leq\theta\leq1}
\left[
(1-\theta)\ell_1(x)+\theta\ell_2(x)
\right]
.
\]
Let $\Psi(x,\theta) = \left[
(1-\theta)\ell_1(x)+\theta\ell_2(x)
\right] +\lambda\lVert x\rVert_1
+\frac{\rho}{2}\lVert x-y_k\rVert^2$ and $q(\theta) = \min_{x} \Psi(x,\theta) $. Since $\Psi$ is convex in $x$ and concave in $\theta$ and the set $[0,1]$ is compact and convex, by Sion's theorem, the subproblem becomes 
\begin{align*}
    \min_x\;
\max_{0\leq\theta\leq1}
\Psi(x,\theta)  =  \max_{0\leq\theta\leq1} \min_x\;
\Psi(x,\theta).
\end{align*}
For a fixed $\theta$, the inner minimization has the solution
\[
x(\theta)
=
\operatorname{soft}\!\left(
y_k-\frac{g(\theta)}{\rho},
\frac{\lambda}{\rho}
\right),
\]
where $g(\theta) = (1-\theta)g_j+\theta a_j$. To get the optimal $\theta$, we only need to solve
$
 \max_{0\leq\theta\leq1} q(\theta).
$
By Danskin's theorem $q^\prime(\theta) = \frac{\partial}{\partial \theta}\Psi(x(\theta),\theta)$, the subgradient of $q$ can be computed as 
\[
q^\prime(\theta) = \ell_2(x(\theta)) - \ell_1(x(\theta)).
\]
Since $q$ is a concave function, the derivative $q^\prime$ is not increasing, and the optimal $\theta^\star$ happens at 
\[
    \theta^\star = \begin{cases}
        0 ,& \text{if } q^\prime(0) \leq  0,\\
        1 ,&\text{if }q^\prime(1) \geq  0,\\
       \text{an interior root of }q^\prime(\theta) = 0 ,& \text{otherwise. }
    \end{cases}
\]
In our implementation, we use bisection to find the root of $q^\prime(\theta) = 0$ in the interval $(0,1)$. We terminate bisection when $|q'(\theta)| \leq 10^{-12} $ or after $80$ iterations. 

\subsection{Reference value $F^\star$ in \cref{subsec:hinge-loss}}
\label{subsec:hinge-loss-Fstar}
Here, we detail how to compute the optimal value of \cref{eq:hinge-loss}.
Recall the identity
$
\max\{0,1-t\}
=
\max_{0\le u\le 1} u(1-t)
$
and $
\sum_{i=1}^m u_i b_i a_i
=
A^\tr(b\odot u)
$
where $A = [a_1^\tr; a_2^\tr; \dots a_m^\tr]$. The problem \cref{eq:hinge-loss} can be equivalently written as
\begin{align}
 &\; \min_x\;
\frac1m\sum_{i=1}^m
\max\{0,1-b_i a_i^\tr x\}
+\frac{\lambda}{2}\|x\|^2 \nonumber \\
= & 
\min_x
\left\{
\frac1m\sum_{i=1}^m
\max_{0\le u_i\le1}
u_i(1-b_i a_i^\tr x)
+
\frac{\lambda}{2}\|x\|^2
\right\}  \nonumber \\
=& 
\min_x
\max_{u\in[0,1]^m}
\left\{
\frac1m\sum_{i=1}^m u_i
-
\frac1m\sum_{i=1}^m
u_i b_i a_i^\tr x
+
\frac{\lambda}{2}\|x\|^2
\right\}  \nonumber \\
=& 
\max_{u\in[0,1]^m} \min_x
\left\{
\frac1m\mathbf 1^\tr u
-
\frac1m
\left\langle A^\tr(b\odot u),x\right\rangle
+
\frac{\lambda}{2}\|x\|^2
\right\}  \nonumber  \\
= & \max_{u\in[0,1]^m}
 q(u):= \left\{
\frac1m\mathbf 1^\tr u - 
\frac{\lambda}{2}
\left\|
\frac{A^\tr(b\odot u)}{m\lambda}
\right\|^2
\right\}, \label{eq:hinge-loss-dual}
\end{align}
where the last equality comes from the optimality condition of the inner minimization
\[
-\frac1m A^\tr(b\odot u)+\lambda x=0 \;\Longleftrightarrow \; x
=
\frac{A^\tr(b\odot u)}{m\lambda}.
\]
In our implementation, we use the algorithm L-BFGS-B in SciPy to solve \cref{eq:hinge-loss-dual} to get $u^\star$ and then use $q(u^\star)$ as a proxy for $F^\star$.
\end{document}

%% file: Preamb.tex
\usepackage{xcolor}
\usepackage{fullpage}
\usepackage{amsmath,amsthm,amssymb,amsfonts}
\usepackage{algorithm} %2e %algorithmic
\usepackage{algpseudocode}
\usepackage[hidelinks]{hyperref}
\hypersetup{
    colorlinks=true,
    linkcolor=blue,
    filecolor=magenta,      
    urlcolor=cyan,
    pdftitle={Revisiting proximal bundle methods},
    pdfpagemode=FullScreen,
}

\usepackage[noblocks]{authblk}
\usepackage{tabularx}%table
\usepackage{booktabs}%table

\usepackage{mathtools}
\usepackage{tcolorbox}
\usepackage{bbm}
\usepackage{todonotes}
\usepackage{tensor}
\usepackage[font=small,labelfont=bf]{caption}
\usepackage{subcaption}
\usepackage{graphicx}
\usepackage{sectsty}

\usepackage{adjustbox} %table
\usepackage{enumitem} %indent

\newcommand{\tr}{\mathsf{ T}}
\newcommand{\RR}{\mathbb{R}}

\newtheorem{theorem}{Theorem}

\newtheorem{remark}{Remark} 
\newtheorem{lemma}{Lemma}
\newtheorem{proposition}{Proposition} 
\newtheorem{corollary}{Corollary} 

\newtheorem{assumption}{Assumption}

\newcommand{\bsf}[1]{\textsf{\LARGE\textbf{#1}}}
\newcommand{\bigline}{\\ \vrule height2pt width 5 in depth 0pt\newline\noindent}

\newcommand{\Dist}{\mathrm{dist}}

\DeclareMathOperator*{\argmin}{\arg\!\min}

\newcommand{\innerproduct}[2]{\left \langle #1, #2 \right\rangle }

\definecolor{moccasin}{rgb}{0.98, 0.92, 0.84}
\newtcolorbox{mybox}{colback=moccasin,
colframe=moccasin}

\usepackage[nameinlink]{cleveref}
\crefname{equation}{}{}
\crefname{theorem}{Theorem}{Theorems}
\crefname{corollary}{Corollary}{Corollaries}
\crefname{example}{Example}{Examples}
\crefname{assumption}{Assumption}{Assumptions}
\crefname{lemma}{Lemma}{Lemmas}
\crefname{proposition}{Proposition}{Propositions}
\crefname{figure}{Figure}{Figures}
\crefname{table}{Table}{Tables}
\crefname{section}{Section}{Sections}
\crefname{appendix}{Appendix}{Appendices}
\Crefname{equation}{}{}
\Crefname{theorem}{Theorem}{Theorems}
\Crefname{corollary}{Corollary}{Corollaries}
\Crefname{example}{Example}{Examples}
\Crefname{lemma}{Lemma}{Lemma}
\Crefname{proposition}{Proposition}{Propositions}
\Crefname{figure}{Figure}{Figures}
\Crefname{table}{Table}{Tables}
\Crefname{section}{Section}{Sections}
\Crefname{appendix}{Appendix}{Appendices}

\newcommand{\bigO}{\mathcal{O}}

\usepackage{wrapfig}
\usepackage{multirow}